\renewcommand{\phi}{\varphi}
\newtheorem{theorem}{Theorem}[section]
\newtheorem{proposition}[theorem]{Proposition}
\newtheorem{lemma}[theorem]{Lemma}
\newtheorem{definition}[theorem]{Definition}
\theoremstyle{definition}
\newcommand{\speedplots}{
	\begin{tikzpicture}
		\node[anchor=south west,inner sep=0] (image) at (0,0) {\includegraphics[height = 60pt, width=0.45\textwidth]{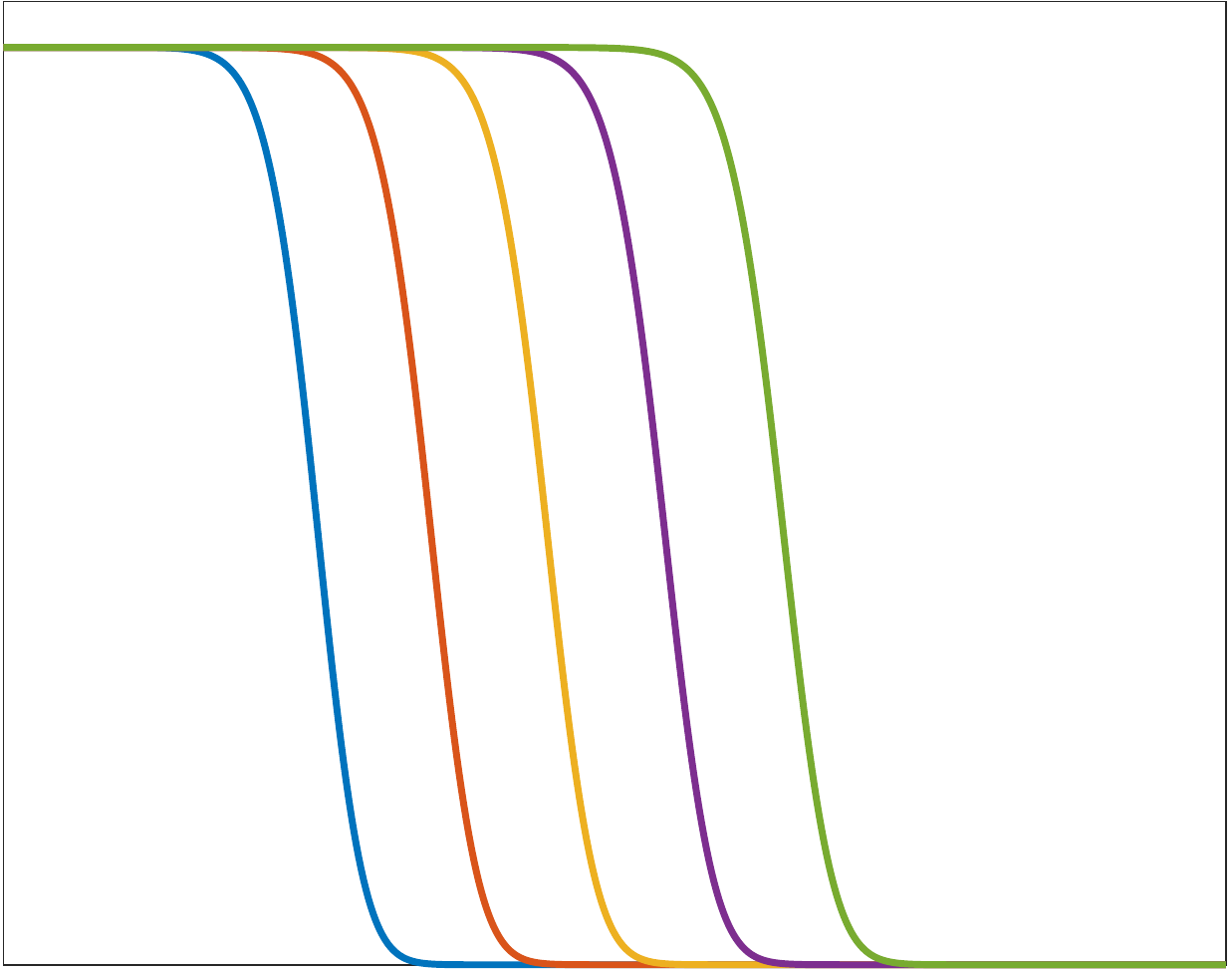}};
		\begin{scope}[x={(image.south east)},y={(image.north west)}]	    		
	\draw [thick,white, fill = white] (.98,0) rectangle (1,1);
    	\draw [thick,white, fill = white] (0,.98) rectangle (1,1);
    	\draw [thick,white, fill = white] (0,0) rectangle (.0032,1);
    	\draw [thick] (0,0) -- (1,0);
    	\draw [thick] (0,0) -- (0,1);
	
		\node [anchor=north east] at (0,0) {\tiny 0};
        \node [anchor=north] at (.5,0) {\tiny 50};
        \node [anchor=north] at (.975,0) {\tiny 100};
        \node [anchor=north west, rotate=-25] at (.254509,0) {\tiny\textbf{25.45}};
        \node [anchor=north west, rotate=-25] at (.635271,0) {\tiny\textbf{63.53}};
        \node [anchor=east] at (0,.475) {\tiny .5};
        \node [anchor=east] at (0,.95) {\tiny 1};
        \draw[thick] (.100,.02) -- (.1,-0.02);
        \draw[thick] (.200,.02) -- (.2,-0.02);
        \draw[thick] (.300,.02) -- (.3,-0.02);
        \draw[thick] (.400,.02) -- (.4,-0.02);
        \draw[thick] (.500,.02) -- (.5,-0.02);
        \draw[thick] (.600,.02) -- (.6,-0.02);
        \draw[thick] (.700,.02) -- (.7,-0.02);
        \draw[thick] (.800,.02) -- (.8,-0.02);
        \draw[thick] (.900,.02) -- (.9,-0.02);
        \draw[thick] (.995,.02) -- (.995,-0.02);
        \draw[thick] (-.01,.2375) -- (0.01,.2375);
        \draw[thick] (-.01,.475) -- (0.01,.475);
        \draw[thick] (-.01,.7125) -- (0.01,.7125);
        \draw[thick] (-.01,.9500) -- (0.01,.9500);
        \draw[thick,dotted] (0.01,.475) -- (.635271,.475);
        \draw[thick,dotted] (.254509,0.01) -- (.254509,.475);
        \draw[thick,dotted] (.346693,0.01) -- (.346693,.475);
        \draw[thick,dotted] (.442886,0.01) -- (.442886,.475);
        \draw[thick,dotted] (.539078,0.01) -- (.539078,.475);
        \draw[thick,dotted] (.635271,0.01) -- (.635271,.475);
        
        \node at (.9,.85) {$\chi = 1$};
    \end{scope}
\end{tikzpicture}
\
\begin{tikzpicture}
    \node[anchor=south west,inner sep=0] (image) at (0,0) {\includegraphics[height = 60pt,width=0.45\textwidth]{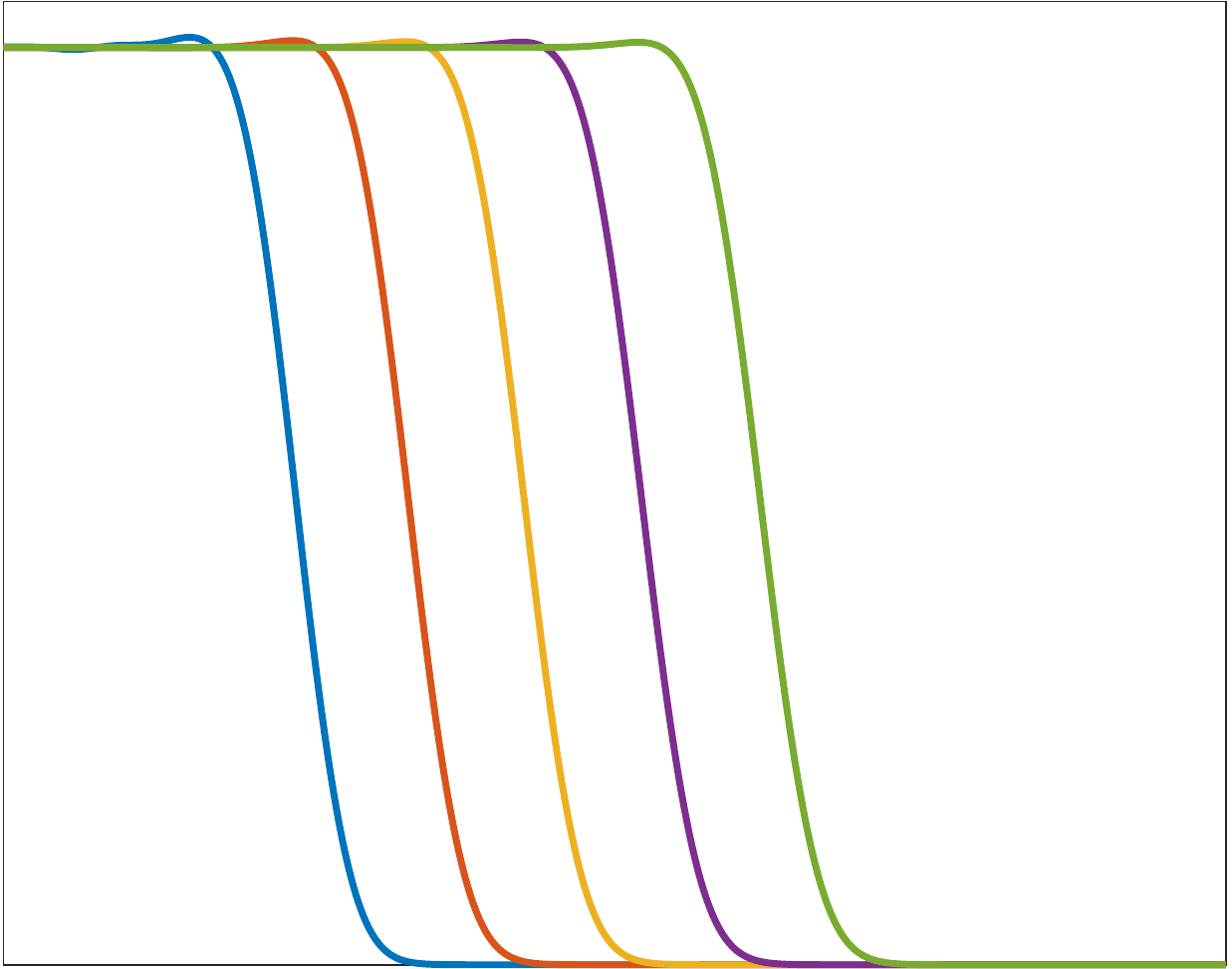}};
    \begin{scope}[x={(image.south east)},y={(image.north west)}]

	\draw [thick,white, fill = white] (.98,0) rectangle (1,1);
    	\draw [thick,white, fill = white] (0,.98) rectangle (1,1);
    	\draw [thick,white, fill = white] (0,0) rectangle (.0032,1);
    	\draw [thick] (0,0) -- (1,0);
    	\draw [thick] (0,0) -- (0,1);

        \node [anchor=north east] at (0,0) {\tiny0};
        \node [anchor=north] at (.5,0) {\tiny50};
        \node [anchor=north] at (.975,0) {\tiny100};
        \node [anchor=north west, rotate=-25] at (.240481,0) {\tiny\textbf{24.05}};
        \node [anchor=north west, rotate=-25] at (.619238,0) {\tiny \textbf{61.92}};
        \node [anchor=east] at (0,.475) {\tiny.5};
        \node [anchor=east] at (0,.95) {\tiny 1};
        \draw[thick] (.100,.02) -- (.1,-0.02);
        \draw[thick] (.200,.02) -- (.2,-0.02);
        \draw[thick] (.300,.02) -- (.3,-0.02);
        \draw[thick] (.400,.02) -- (.4,-0.02);
        \draw[thick] (.500,.02) -- (.5,-0.02);
        \draw[thick] (.600,.02) -- (.6,-0.02);
        \draw[thick] (.700,.02) -- (.7,-0.02);
        \draw[thick] (.800,.02) -- (.8,-0.02);
        \draw[thick] (.900,.02) -- (.9,-0.02);
        \draw[thick] (.995,.02) -- (.995,-0.02);
        \draw[thick] (-.01,.2375) -- (0.01,.2375);
        \draw[thick] (-.01,.475) -- (0.01,.475);
        \draw[thick] (-.01,.7125) -- (0.01,.7125);
        \draw[thick] (-.01,.9500) -- (0.01,.9500);
        \draw[thick,dotted] (0.01,.475) -- (.619238,.475);
        \draw[thick,dotted] (.240481,0.01) -- (.240481,.475);
         \draw[thick,dotted] (.332665,0.01) -- (.332665,.475);
         \draw[thick,dotted] (.426854,0.01) -- (.426854,.475);
         \draw[thick,dotted] (.523046,0.01) -- (.523046,.475);
        \draw[thick,dotted] (.619238,0.01) -- (.619238,.475);
        \node at (.9, .85) {$\chi = 3$};
    \end{scope}
\end{tikzpicture}

\begin{tikzpicture}
    \node[anchor=south west,inner sep=0] (image) at (0,0) {\includegraphics[height = 60pt,width=0.45\textwidth]{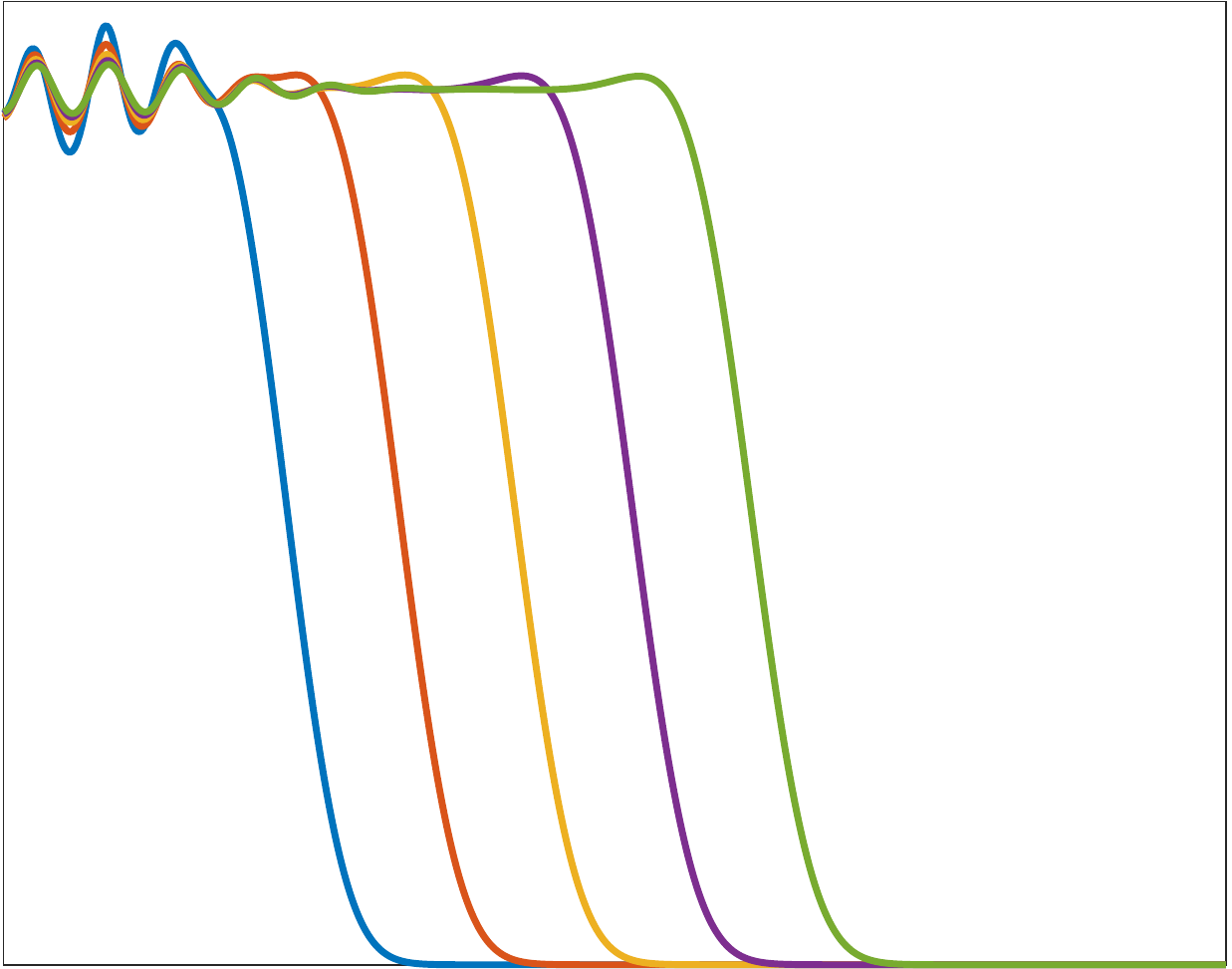}};
    \begin{scope}[x={(image.south east)},y={(image.north west)}]

	\draw [thick,white, fill = white] (.98,0) rectangle (1,1);
    	\draw [thick,white, fill = white] (0,.98) rectangle (1,1);
    	\draw [thick,white, fill = white] (0,0) rectangle (.0032,1);
    	\draw [thick] (0,0) -- (1,0);
    	\draw [thick] (0,0) -- (0,1);

        \node [anchor=north east] at (0,0) {\tiny 0};
        \node [anchor=north] at (.5,0) {\tiny50};
        \node [anchor=north] at (.975,0) {\tiny100};
        \node [anchor=north west, rotate=-25] at (.234469,0) {\tiny \textbf{23.45}};
        \node [anchor=north west, rotate=-25] at (.613226,0) {\textbf{\tiny 61.32}};
        \node [anchor=east] at (0,.45) {\tiny .5};
        \node [anchor=east] at (0,.90) {\tiny 1};
        \draw[thick] (.100,.02) -- (.1,-0.02);
        \draw[thick] (.200,.02) -- (.2,-0.02);
        \draw[thick] (.300,.02) -- (.3,-0.02);
        \draw[thick] (.400,.02) -- (.4,-0.02);
        \draw[thick] (.500,.02) -- (.5,-0.02);
        \draw[thick] (.600,.02) -- (.6,-0.02);
        \draw[thick] (.700,.02) -- (.7,-0.02);
        \draw[thick] (.800,.02) -- (.8,-0.02);
        \draw[thick] (.900,.02) -- (.9,-0.02);
        \draw[thick] (.995,.02) -- (.995,-0.02);
        \draw[thick] (-.01,.225) -- (0.01,.225);
        \draw[thick] (-.01,.450) -- (0.01,.450);
        \draw[thick] (-.01,.675) -- (0.01,.675);
        \draw[thick] (-.01,.900) -- (0.01,.900);
        \draw[thick,dotted] (0.01,.45) -- (.613226,.45);
        \draw[thick,dotted] (.234469,0.01) -- (.234469,.45);
        \draw[thick,dotted] (.324649,0.01) -- (.324649,.45);
        \draw[thick,dotted] (.420842,0.01) -- (.420842,.45);
        \draw[thick,dotted] (.517034,0.01) -- (.517034,.45);
        \draw[thick,dotted] (.613226,0.01) -- (.613226,.45);
         \node at (.9, .85) {$\chi = 4$};
    \end{scope}
\end{tikzpicture}
\
\begin{tikzpicture}
    \node[anchor=south west,inner sep=0] (image) at (0,0) {\includegraphics[height = 60 pt, width=0.45\textwidth]{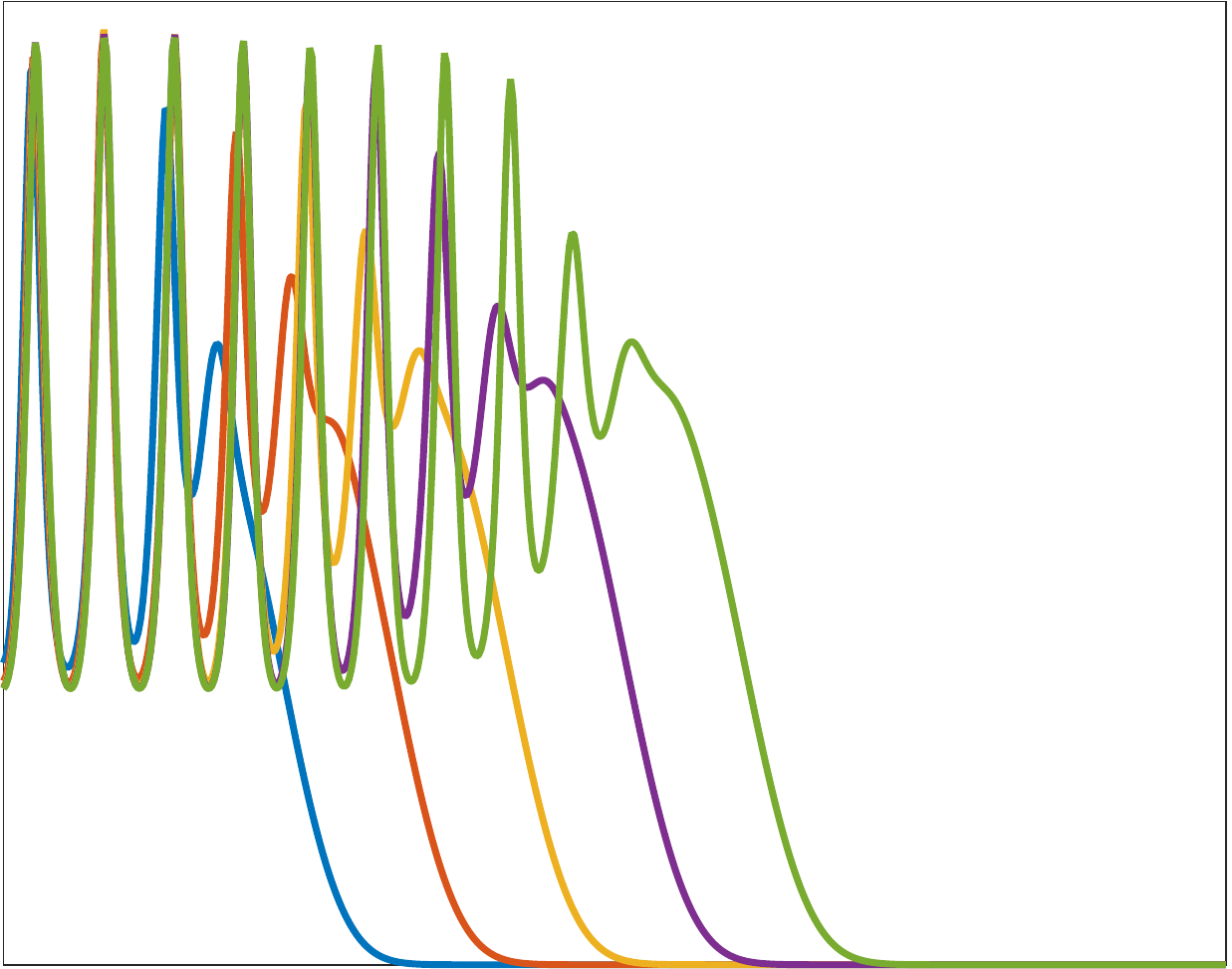}};
    \begin{scope}[x={(image.south east)},y={(image.north west)}]

	\draw [thick,white, fill = white] (.98,0) rectangle (1,1);
    	\draw [thick,white, fill = white] (0,.98) rectangle (1,1);
    	\draw [thick,white, fill = white] (0,0) rectangle (.0032,1);
    	\draw [thick] (0,0) -- (1,0);
    	\draw [thick] (0,0) -- (0,1);

        \node [anchor=north east] at (0,0) {\tiny0};
        \node [anchor=north] at (.5,0) {\tiny50};
        \node [anchor=north] at (.975,0) {\tiny100};
        \node [anchor=north west, rotate=-25] at (.238477,0) {\tiny\textbf{23.85}};
        \node [anchor=north west, rotate=-25] at (.617234,0) {\tiny\textbf{61.72}};
        \node [anchor=east] at (0,.475) {\tiny.8};
        \node [anchor=east] at (.01,.95) {\tiny1.6};
        \draw[thick] (.100,.02) -- (.1,-.02);
        \draw[thick] (.200,.02) -- (.2,-.02);
        \draw[thick] (.300,.02) -- (.3,-.02);
        \draw[thick] (.400,.02) -- (.4,-.02);
        \draw[thick] (.500,.02) -- (.5,-.02);
        \draw[thick] (.600,.02) -- (.6,-.02);
        \draw[thick] (.700,.02) -- (.7,-.02);
        \draw[thick] (.800,.02) -- (.8,-.02);
        \draw[thick] (.900,.02) -- (.9,-.02);
        \draw[thick] (.995,.02) -- (.995,-.02);
        \draw[thick] (-.01,.2375) -- (0.01,.2375);
        \draw[thick] (-.01,.4750) -- (0.01,.4750);
        \draw[thick] (-.01,.7125) -- (0.01,.7125);
        \draw[thick] (-.01,.9500) -- (0.01,.9500);
        \draw[thick,dotted] (0.01,.2375) -- (.617234,.2375);
        \draw[thick,dotted] (.238477,0.01) -- (.238477,.2375);
        \draw[thick,dotted] (.330661,0.01) -- (.330661,.2375);
        \draw[thick,dotted] (.424850,0.01) -- (.424850,.2375);
        \draw[thick,dotted] (.521042,0.01) -- (.521042,.2375);
        \draw[thick,dotted] (.617234,0.01) -- (.617234,.2375);
                 \node at (.9, .85) {$\chi = 5$};
    \end{scope}
\end{tikzpicture}
}
\newcommand{\patternplots}{
	\begin{tikzpicture}
		\node[anchor=south west,inner sep=0] (image) at (0,0) {\includegraphics[height = 60pt, width=0.45\textwidth]{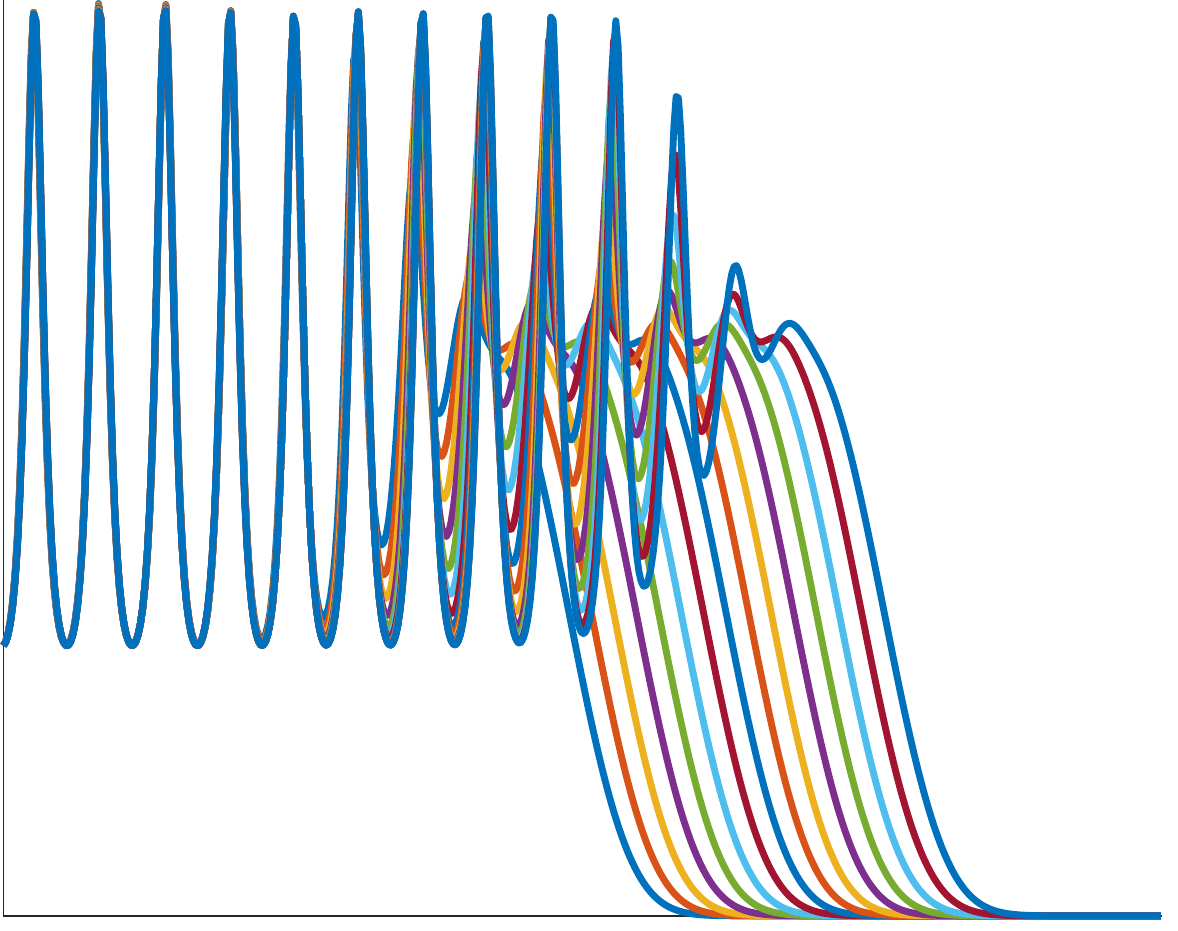}};
		\begin{scope}[x={(image.south east)},y={(image.north west)}]	    		
    	\draw [thick] (.002,.01) -- (1,.01);
    	\draw [thick] (.002,.01) -- (.002,1);

        \node [anchor=north east] at (.01,.05) {\tiny0};
        \node [anchor=north] at (.5,.02) {\tiny50};
        \node [anchor=north] at (.975,.02) {\tiny100};
        \node [anchor=east] at (0,.4925) {\tiny.8};
        \node [anchor=east] at (.01,.985) {\tiny1.6};
        \draw[thick] (.100,.03) -- (.1,-.01);
        \draw[thick] (.200,.03) -- (.2,-.01);
        \draw[thick] (.300,.03) -- (.3,-.01);
        \draw[thick] (.400,.03) -- (.4,-.01);
        \draw[thick] (.500,.03) -- (.5,-.01);
        \draw[thick] (.600,.03) -- (.6,-.01);
        \draw[thick] (.700,.03) -- (.7,-.01);
        \draw[thick] (.800,.03) -- (.8,-.01);
        \draw[thick] (.900,.03) -- (.9,-.01);
        \draw[thick] (.995,.03) -- (.995,-.01);
        \draw[thick] (-.008,.24625) -- (0.012,.24625);
        \draw[thick] (-.008,.49250) -- (0.012,.4925);
        \draw[thick] (-.008,.73875) -- (0.012,.73875);
        \draw[thick] (-.008,.985) -- (0.012,.985);

    \end{scope}
\end{tikzpicture}
\begin{tikzpicture}
    \node[anchor=south west,inner sep=0] (image) at (0,0) {\includegraphics[height = 60pt,width=0.45\textwidth]{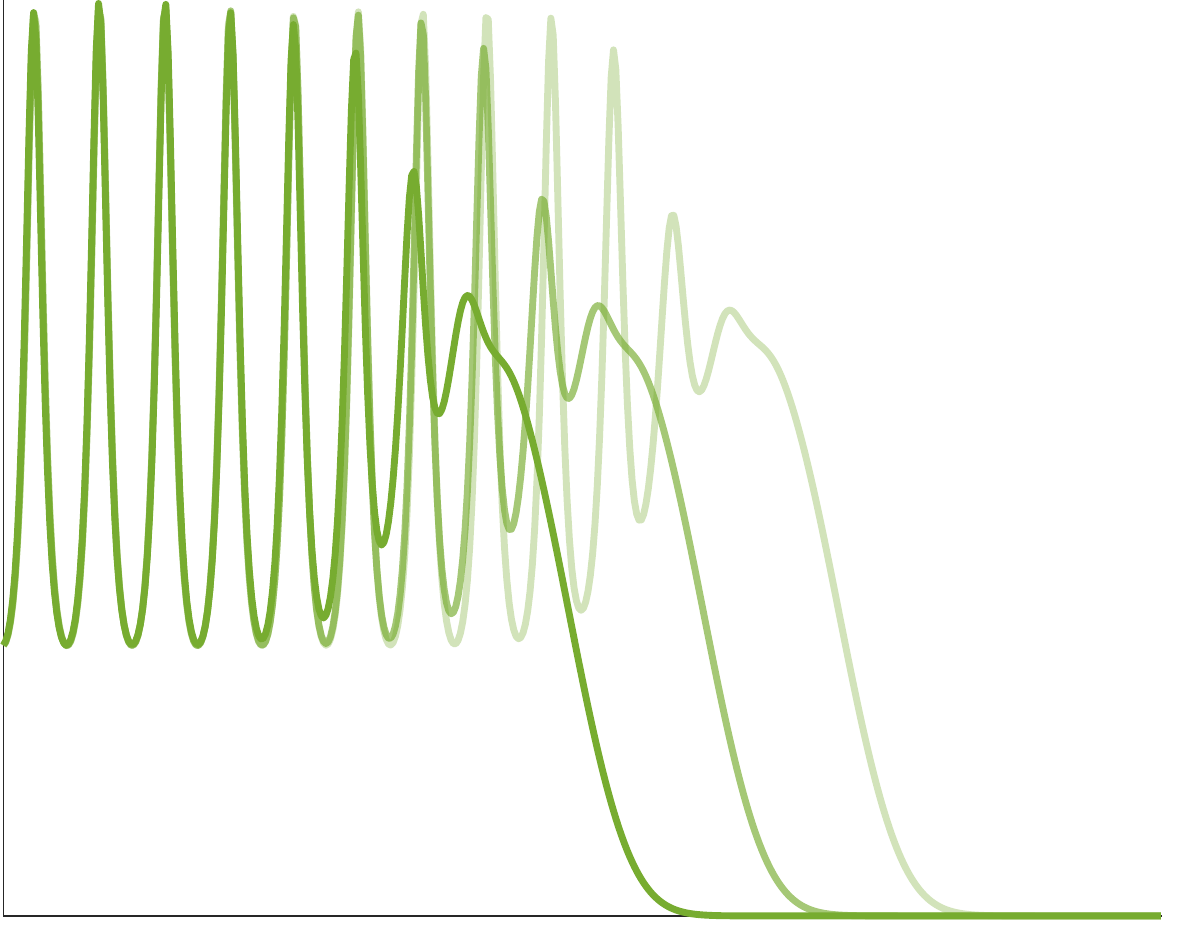}};
    \begin{scope}[x={(image.south east)},y={(image.north west)}]


    	\draw [thick] (.002,.01) -- (1,.01);
    	\draw [thick] (.002,.01) -- (.002,1);

       \node [anchor=north east] at (.01,.05) {\tiny0};
        \node [anchor=north] at (.5,.02) {\tiny50};
        \node [anchor=north] at (.975,.02) {\tiny100};
        \node [anchor=east] at (0,.4925) {\tiny.8};
        \node [anchor=east] at (.01,.985) {\tiny1.6};
        \draw[thick] (.100,.03) -- (.1,-.01);
        \draw[thick] (.200,.03) -- (.2,-.01);
        \draw[thick] (.300,.03) -- (.3,-.01);
        \draw[thick] (.400,.03) -- (.4,-.01);
        \draw[thick] (.500,.03) -- (.5,-.01);
        \draw[thick] (.600,.03) -- (.6,-.01);
        \draw[thick] (.700,.03) -- (.7,-.01);
        \draw[thick] (.800,.03) -- (.8,-.01);
        \draw[thick] (.900,.03) -- (.9,-.01);
        \draw[thick] (.995,.03) -- (.995,-.01);
        \draw[thick] (-.008,.24625) -- (0.012,.24625);
        \draw[thick] (-.008,.49250) -- (0.012,.4925);
        \draw[thick] (-.008,.73875) -- (0.012,.73875);
        \draw[thick] (-.008,.985) -- (0.012,.985);

    \end{scope}
\end{tikzpicture}

\begin{tikzpicture}
    \node[anchor=south west,inner sep=0] (image) at (0,0) {\includegraphics[height = 60pt,width=0.45\textwidth]{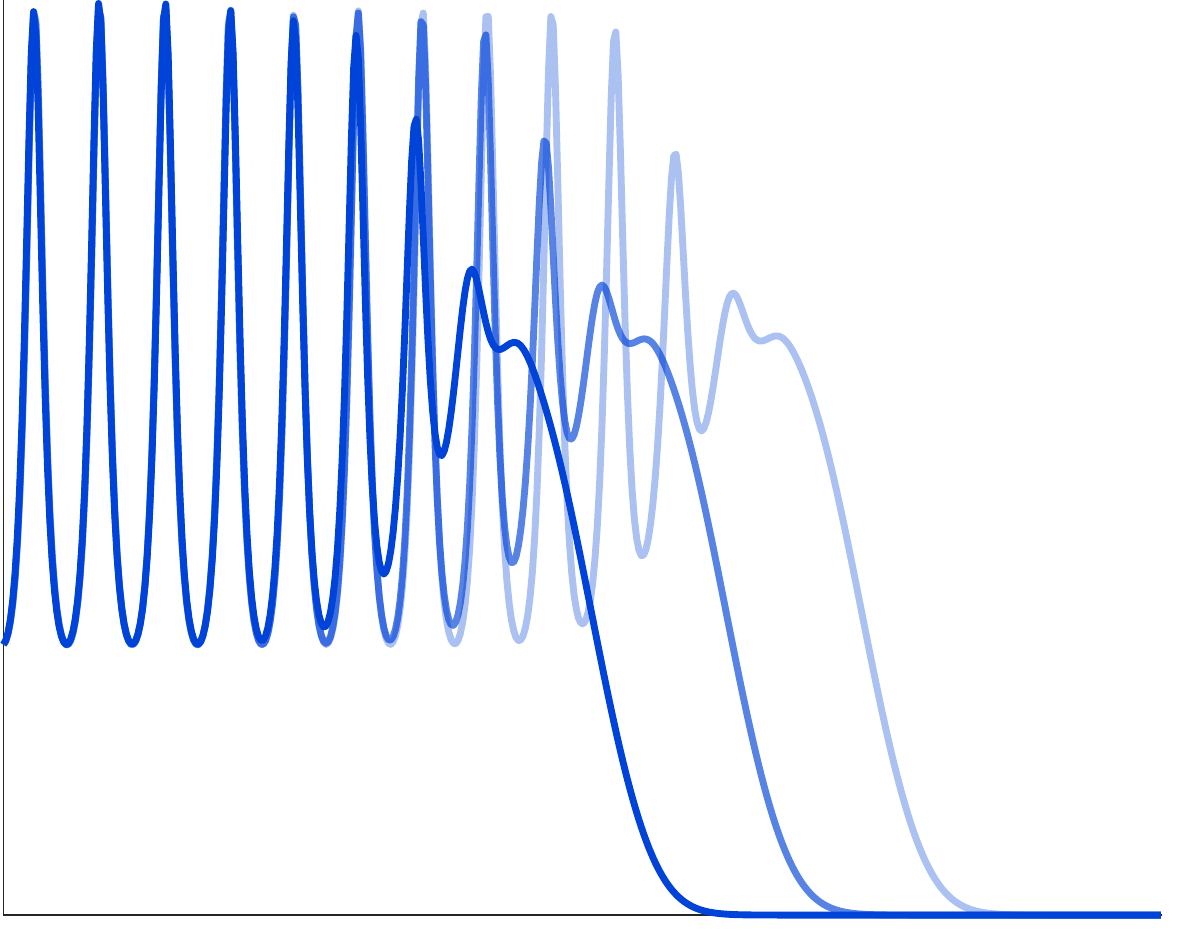}};
    \begin{scope}[x={(image.south east)},y={(image.north west)}]

    	\draw [thick] (.002,.01) -- (1,.01);
    	\draw [thick] (.002,.01) -- (.002,1);

       \node [anchor=north east] at (.01,.05) {\tiny0};
        \node [anchor=north] at (.5,.02) {\tiny50};
        \node [anchor=north] at (.975,.02) {\tiny100};
        \node [anchor=east] at (0,.4925) {\tiny.8};
        \node [anchor=east] at (.01,.985) {\tiny1.6};
        \draw[thick] (.100,.03) -- (.1,-.01);
        \draw[thick] (.200,.03) -- (.2,-.01);
        \draw[thick] (.300,.03) -- (.3,-.01);
        \draw[thick] (.400,.03) -- (.4,-.01);
        \draw[thick] (.500,.03) -- (.5,-.01);
        \draw[thick] (.600,.03) -- (.6,-.01);
        \draw[thick] (.700,.03) -- (.7,-.01);
        \draw[thick] (.800,.03) -- (.8,-.01);
        \draw[thick] (.900,.03) -- (.9,-.01);
        \draw[thick] (.995,.03) -- (.995,-.01);
        \draw[thick] (-.008,.24625) -- (0.012,.24625);
        \draw[thick] (-.008,.49250) -- (0.012,.4925);
        \draw[thick] (-.008,.73875) -- (0.012,.73875);
        \draw[thick] (-.008,.985) -- (0.012,.985);

    \end{scope}
\end{tikzpicture}
\begin{tikzpicture}
    \node[anchor=south west,inner sep=0] (image) at (0,0) {\includegraphics[height = 60 pt, width=0.45\textwidth]{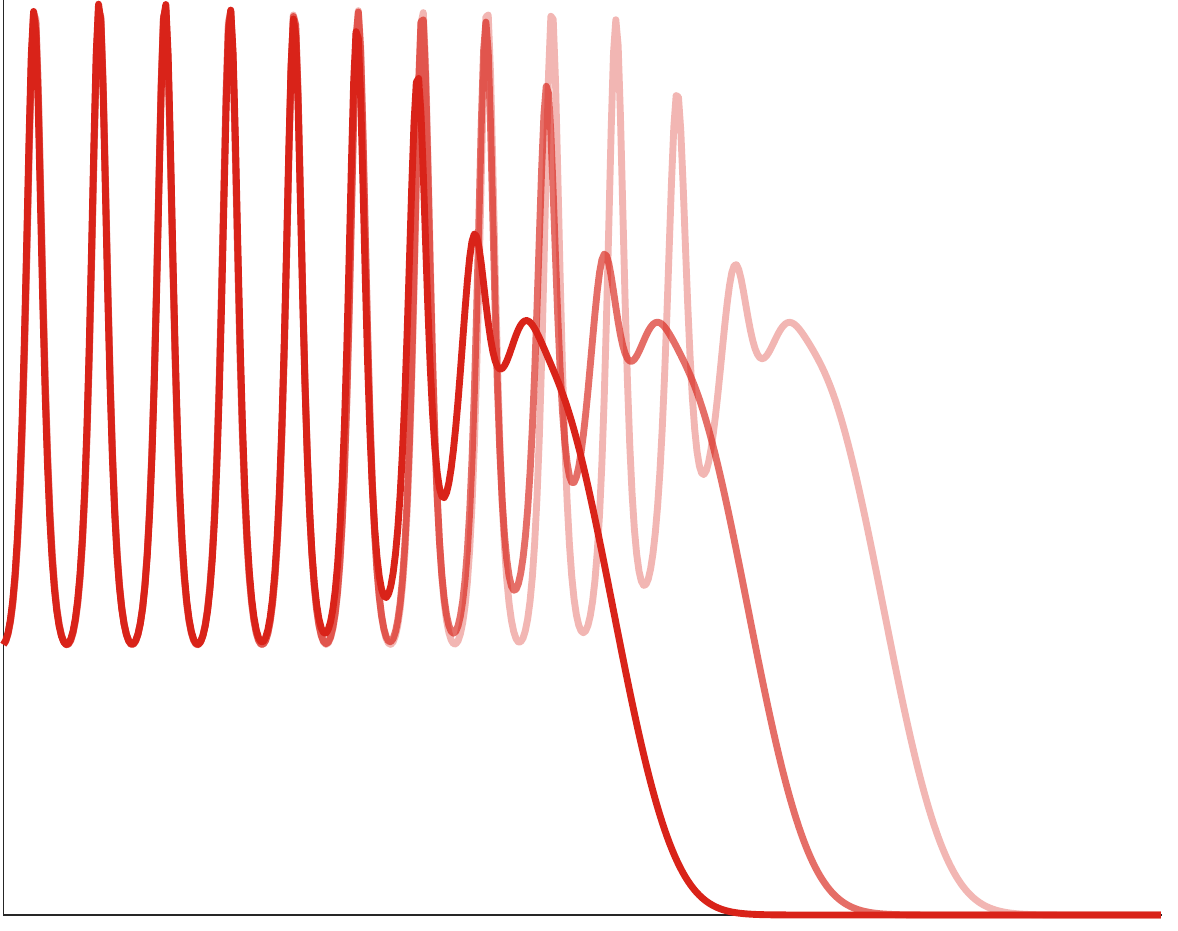}};
    \begin{scope}[x={(image.south east)},y={(image.north west)}]

    	\draw [thick] (.002,.01) -- (1,.01);
    	\draw [thick] (.002,.01) -- (.002,1);

       \node [anchor=north east] at (.01,.05) {\tiny0};
        \node [anchor=north] at (.5,.02) {\tiny50};
        \node [anchor=north] at (.975,.02) {\tiny100};
        \node [anchor=east] at (0,.4925) {\tiny.8};
        \node [anchor=east] at (.01,.985) {\tiny1.6};
        \draw[thick] (.100,.03) -- (.1,-.01);
        \draw[thick] (.200,.03) -- (.2,-.01);
        \draw[thick] (.300,.03) -- (.3,-.01);
        \draw[thick] (.400,.03) -- (.4,-.01);
        \draw[thick] (.500,.03) -- (.5,-.01);
        \draw[thick] (.600,.03) -- (.6,-.01);
        \draw[thick] (.700,.03) -- (.7,-.01);
        \draw[thick] (.800,.03) -- (.8,-.01);
        \draw[thick] (.900,.03) -- (.9,-.01);
        \draw[thick] (.995,.03) -- (.995,-.01);
        \draw[thick] (-.008,.24625) -- (0.012,.24625);
        \draw[thick] (-.008,.49250) -- (0.012,.4925);
        \draw[thick] (-.008,.73875) -- (0.012,.73875);
        \draw[thick] (-.008,.985) -- (0.012,.985);
    \end{scope}
\end{tikzpicture}
}
\newsavebox{\accentbox}
\newcommand{\R}{\mathbb{R}}
\newcommand{\calA}{\mathcal{A}}
\newcommand{\calB}{\mathcal{B}}
\newcommand{\calF}{\mathcal{F}}
\newcommand{\eul}{\sigma_{\rm ul}}
\newcommand{\lul}{L_{\rm ul}}
\newcommand{\va}{V_a}
\newcommand{\coloneqq}{:=}
\newcommand{\eps}{\varepsilon}
\newcommand{\e}{\eps}
\newcommand{\Cloc}{C_{\rm loc}}
\newcommand{\inv}{^{-1}}
\DeclareMathOperator{\sign}{sign}
\DeclareMathOperator{\id}{Id}
\newcommand{\abs}[1]{\mleft|#1\mright|}
\newcommand{\norm}[1]{\mleft\|#1\mright\|}
\renewcommand{\d}{\,d}
\newcommand{\dv}[2]{\frac{d #1}{d #2}}
\numberwithin{equation}{section}
\begin{document}

\title[KS-FKPP Traveling waves]{Traveling waves for the Keller-Segel-FKPP equation with strong chemotaxis}
\author{Christopher Henderson}
\address[Christopher Henderson]{Department of Mathematics, University of Arizona}
\email{ckhenderson@math.arizona.edu}

\author{Maximilian Rezek}
\address[Maximilian Rezek]{Department of Mathematics, University of Arizona}
\email{maximilianrezek@math.arizona.edu}

\begin{abstract}
	We show that there exist traveling wave solutions of the Keller-Segel-FKPP equation, which models a diffusing and logistically growing population subject to chemotaxis. In contrast to previous results, our result is in the strong aggregation regime; that is, we make no smallness assumption on the parameters. The lack of a smallness condition makes $L^\infty$-estimates difficult to obtain as the comparison principle no longer gives them ``for free." Instead, our proof is based on suitable energy estimates in a carefully tailored uniformly local $L^p$-space.  Interestingly, our uniformly local space involves a scaling parameter, the choice of which is a crux of the argument. Numerical experiments exploring the stability, qualitative properties, and speeds of these waves are presented as well.
\end{abstract}

\maketitle

\section{Introduction}\label{sec.intro}


The main goal of this work is to analyze front propagation phenomena in an FKPP-Keller-Segel system:
\begin{equation}\label{eq.model}
    \begin{cases}
    u_t+\chi(uv_x)_x=u_{xx}+u(1-u) & \text{in }(0,\infty)\times\R,\\
    -dv_{xx}=u-v & \text{in }(0,\infty)\times\R,
    \end{cases}
\end{equation}
where $d>0$ and $\chi\in\R$.  We point out that the unique bounded solution of the second equation is given in terms of a convolution:
\begin{equation}\label{eq.vConv}
    v=K_d*u
    \qquad\text{where}\qquad K_d(x)=\frac{1}{2\sqrt{d}}e^{-\frac{\abs{x}}{\sqrt{d}}}.
\end{equation}
The interpretation of equation~\eqref{eq.model} is the following: $u$ represents the population density of a species that diffuses, reproduces and competes logistically, as well as interacts intraspecifically via a ``chemical signal'' $v$.  In this model, that interaction is chemotaxis: each individual both secretes a chemical signal and moves in response to the chemical signal of its ``neighbors.''  
The sensitivity constant, $\chi\in\R$, describes each individual's perception and response to the chemical signal, with its magnitude encoding the strength of the chemotaxis, and the sign of $\chi$ determines if the chemotaxis is aggregative $(\chi>0)$ or dispersive $(\chi<0)$. 
Roughly, the diffusion coefficient, $d>0$, is the length-scale on which chemotaxis acts. 
Chemotaxis is a well-studied phenomenon that is often seen in slime molds and bacteria~\cite{keller1970initiation,Murray,Perthame_transport}.

We are interested in traveling wave solutions to \eqref{eq.model} and their speed (we define these terms in \Cref{def.TWS}).  Traveling waves have previously been studied for similar reaction-diffusion equations including the Fisher-KPP equation, which arises when $\chi = 0$ in \eqref{eq.model}.  In this case, the speed of the slowest traveling wave is $2$, and this minimal speed wave describes the long-time behavior of solutions with ``localized'' initial data.  This is typical of reaction-diffusion models: the (minimal speed) traveling waves are stable in a suitable sense, meaning that their speed and profile generically describes the invasion of a species into a new environment.  We are interested in establishing a more complete understanding of the effects chemotaxis has on traveling waves.


The spreading properties of~\eqref{eq.model} have been the subject of great interest recently.  At the level of traveling waves, their behavior is fairly well understood as long as $\chi$ is not too positive.  Indeed, traveling waves have been shown to exist under the condition
\begin{equation}\label{e.c040202}
	\chi<\min\{1,d\}.
\end{equation}
This is contained in~\cite{NadinPerthameRyzhik} when $\chi > 0$ and~\cite{GrietteHendersonTuranova} when $\chi < 0$ (see also~\cite{SalakoShen2,SalakoShen4}).  We explain the technical importance of~\eqref{e.c040202} in \Cref{s.discussion}.  In this regime, it is known that, roughly, if $|\chi|$ and $d$ are not ``too big,'' any minimal speed traveling wave has speed $2$, while the minimal speed tends to infinity if $\chi \to -\infty$~\cite{NadinPerthameRyzhik,GrietteHendersonTuranova, henderson2021slow, SalakoShenXue}.  One expects, and sees numerically~\cite{AveryHolzerScheel}, a transition at a critical curve in the $(\chi,d)$ parameter space.  The Cauchy problem is more difficult, but certain aspects of spreading have been established~\cite{SalakoShen1,SalakoShen2, HamelHenderson}.  We note that there is an enormous PDE literature on chemotaxis focused on aspects like blow-up~\cite{DolbeaultPerthame, BlanchetDolbeaultPerthame, CarlenFigalli, MR2412327, MR3001763, MR1709861,MR2445771},  as well as front propagation-like questions for the many related models~\cite{Bramburger, SalakoShen3, fu2021sharp,GrietteMagalZhao,FuGrietteMagal-JMB,FuGrietteMagal-DCDSB,Bertsch-etal-2020,PerthameVauchelet,KimPozar, KimTuranova,PerthameQuirosVazquez, MR4401504,MR4219122,MR4303774, MR1709861, Calvez}.  The literature is truly vast, and this is only a small sampling of it.

Our interest in this paper is to investigate what happens when condition~\eqref{e.c040202} is not satisfied. Specifically, we construct traveling waves for any $\chi,d>0$, which requires a new approach to {\em a priori} estimates for the traveling wave problem associated to~\eqref{eq.model}.  We complement this with a numerical investigation of the behavior of the Cauchy problem that reveals (1) the minimal speed appears to always be $2$ in contrast to what happens when $\chi \to-\infty$, and (2) the possible existence of a bifurcation: when $\chi$ is large relative to $d$, pulsating fronts (roughly, traveling waves with patterns in the back) appear.

\subsection{Main result}

Let us begin by defining the notion of a traveling wave in our context. 

\begin{definition}\label{def.TWS}
A \textbf{traveling wave solution} to \eqref{eq.model} is a triple $(c,U,V)$ such that
\begin{enumerate}[topsep=1ex, itemsep=1ex,]
    \item $0<U\in C^2(\R)\cap L^\infty(\R)$,
    \item $c>0$,
    \item $u(t,x)=U(x-ct)$ and $v(t,x)=V(x-ct)$ solve \eqref{eq.model}, and
    \item $\displaystyle{\liminf_{x\to-\infty}U(x)>0}$, $\displaystyle{\lim_{x\to\infty}U(x)=0}$.
\end{enumerate}
We refer to $c$ and $U$ as the \textbf{speed} and \textbf{profile}, respectively.
\end{definition}

\noindent  Applying the coordinate change in \Cref{def.TWS}, the existence of a traveling wave solution to~\eqref{eq.model} is equivalent to finding a triple $(c,U,V)$ that solves the system:
\begin{equation}\label{eq.TWSmodel}
    \begin{cases}
    -cU'+\chi(UV')'=U''+U(1-U) & \text{in }\R,\\
    -dV''=U-V & \text{in }\R.
    \end{cases}
\end{equation}

We now state our main theorem.
\begin{theorem}\label{t.thm}
    For all $\chi,d>0$, there exists a traveling wave solution to \eqref{eq.model}.  Moreover, the speed $c$ of any traveling wave solution satisfies $c\geq 2$.
\end{theorem}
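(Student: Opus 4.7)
The plan is to handle the two parts of \Cref{t.thm} separately. First, the bound $c \geq 2$ follows from leading-edge linearization at $x = +\infty$. Both $U$ and $V = K_d * U$ decay to $0$ there at comparable rates by~\eqref{eq.vConv}, so the chemotactic flux $\chi(UV')'$ is quadratic in $U$ and negligible compared to the linear part of the reaction. The effective linearization is $U'' + cU' + U = 0$, and positivity of $U$ near $+\infty$ forces the characteristic roots $\lambda = (-c \pm \sqrt{c^2-4})/2$ to be real, yielding $c \geq 2$. This is made rigorous by a standard exponential supersolution argument or a Harnack/sliding argument exploiting positivity.

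For existence, I would approximate~\eqref{eq.TWSmodel} on a bounded interval $[-L, L]$ with suitable Dirichlet conditions (e.g.\ $U_L(-L) = 1$, $U_L(L) = 0$), coupled with the convolution representation of $V_L$. The speed $c_L$ enters as an unknown, fixed by a normalization such as $U_L(0) = \theta$ for some small $\theta > 0$ that breaks translation invariance and excludes trivial limits. A Leray--Schauder degree argument along a homotopy in the chemotactic strength, from $\chi = 0$ (the classical Fisher--KPP problem, whose traveling waves are well known) to the target value, reduces existence for the truncated problem to uniform a priori estimates along the path. The limit $L \to \infty$ is then obtained by elliptic regularity, Arzel\`a--Ascoli, and a diagonal extraction, with the asymptotic conditions in \Cref{def.TWS} recovered in the limit by standard arguments exploiting the instability of $U \equiv 0$ and the normalization at $x=0$.

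The crux, and the main obstacle, is obtaining these a priori bounds uniformly in $L$. The standard $L^\infty$ estimate via the comparison principle---which gives $U \leq 1$ under the smallness hypothesis~\eqref{e.c040202}---breaks down in the strong-aggregation regime, since the chemotactic flux can concentrate $U$ above $1$ in a way the logistic damping cannot immediately suppress. Following the approach signaled in the abstract, I would work in a uniformly local Lebesgue space
$$\|U\|_{L^p_{ul, R}} := \sup_{x_0 \in \R} \left( \int_{x_0}^{x_0+R} U^p \, dx \right)^{1/p},$$
for some $p > 1$ and a carefully selected scale $R = R(\chi, d, p) > 0$. Testing the first equation of~\eqref{eq.TWSmodel} against $U^{p-1}\phi^2$, for $\phi$ a smooth cutoff supported on a window of length $R$, the logistic reaction contributes dissipation of order $-\int U^{p+1}\phi^2$, while the chemotactic term, after integration by parts and substitution of $-dV'' = U - V$, produces a competing source of order $(\chi/d)\int U^{p+1}\phi^2$ together with cross terms involving $V$ and $V'$. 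Because $V = K_d * U$ is smoothed on scale $\sqrt d$, $L^\infty$ bounds for $V$ and $V'$ reduce (with geometric-series constants) to $L^1_{ul, \sqrt d}$ bounds for $U$, while the flux generated by $\phi'$ scales like $1/R$. Choosing $R$ as a function of $\chi, d, p$ so that these competing contributions close into a useable self-improving estimate is the delicate step, and is precisely where the scaling built into the uniformly local norm enters in an essential way. Once a bound $\|U_L\|_{L^p_{ul, R}} \leq C(\chi, d)$ has been established uniformly in $L$, local elliptic regularity and the smoothing properties of $K_d$ supply the higher-order control needed to extract the desired limit and conclude.
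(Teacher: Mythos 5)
Your roadmap --- slab approximation with a normalization to break translation invariance, Leray--Schauder homotopy in the chemotactic strength, a priori bounds in a uniformly local $L^p$ space, $L^\infty$ bootstrap, and a sliding/linearization argument for $c\geq 2$ --- is essentially the paper's strategy. The compactly-supported window in your $\|\cdot\|_{L^p_{ul,R}}$ versus the paper's smoothed exponential weight $\phi = e^{-\eul|\cdot|}*\psi$ is a cosmetic difference ($\eul$ plays the role of $1/R$), and the lower bound $c\geq 2$ is indeed made rigorous by a sub-solution sliding argument of the kind you describe, with the normalization at the origin used to control $V$ and $V'$ far to the right (\Cref{l.v v'<theta}, \Cref{p.lowerboundspeed}, \Cref{p.speed}).

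The gap is in closing the energy estimate. You sketch the Tello--Winkler scheme: test with $U^{p-1}\phi^2$ and balance the logistic dissipation $-\int U^{p+1}\phi^2$ against the chemotactic source $(\chi/d)\int U^{p+1}\phi^2$. But the elliptic traveling-wave structure introduces a term with no parabolic analogue: the convection $\int c\, U'\, U^{p-1}\phi^2$ survives and, after integration by parts, contributes something of order $(c/R)\,\|U\|_{\lul^p}^p$. The energy bound therefore unavoidably depends on the unknown speed $c$; meanwhile, the only available control on $c$ (\Cref{l.c_a<=|U_a|}) runs back through $\|V\|_{L^\infty}\lesssim\|U\|_{\lul^p}$, making the two estimates circular. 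This ``loop'' is exactly what the paper identifies as the main difficulty, and your proposal does not surface it: the bound $\|U_L\|_{L^p_{ul,R}}\leq C(\chi,d)$ you aim to establish does not exist as a stand-alone estimate, and ``a useable self-improving estimate'' obtained by tuning $R$ against $\chi,d,p$ does not explain how $c$ is controlled. The paper's resolution is quantitative: because $|\phi'|\leq\eul\phi$, Young's inequality converts the convective contribution into $C\eul c_a^2 + \tfrac{1}{N}\|U_a\|_{\lul^2}^2$, yielding the specific shape $\|U_a\|_{\lul^2}^2 \leq C(\eul c_a^2 + \eul^{-1})$ of \Cref{l.energy}; feeding $c_a^2 \lesssim 1 + \|U_a\|_{\lul^2}^2$ back in and taking $\eul$ small breaks the loop. (A smaller structural difference: the paper does not take $p$ close to $1$ to make the chemotactic coefficient negative, but uses $p=2$ and a cancellation between the substituted $V''$-term and the integrated-by-parts flux, so the coefficient of $\int\phi_s U_a^2$ is $1$ for every $\chi,d$.) Until the mechanism that defeats the $c$-dependence is made explicit, the a priori estimate step of your proposal does not go through.
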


We make a few comments about the theorem.  First, note that this result actually holds for \textit{any} $\chi\in\R$.  As mentioned above, the case when $\chi$ is nonpositive has been considered in \cite{henderson2021slow, GrietteHendersonTuranova}, and the case when $\chi$ is small relative to $d$ was considered in, e.g.,~\cite{NadinPerthameRyzhik, SalakoShenXue}; however, the smallness condition~\eqref{e.c040202} was crucial to the argument in these works.

Second, it is natural to ask if our choice of $K_d$ in~\eqref{eq.vConv} is necessary for our argument or if they would hold for a similar model but with nonlocal advection $v = K_d * u$ defined by a different $K_d$.  This type of model was considered in \cite{henderson2021slow}, where the interested reader can find natural conditions on $K_d$.  Our proof seems quite flexible in this regard.  The form of $K_d$ is mainly used in relating the $L^\infty$-norm of $V$ to a certain ``uniformly local'' norm of $U$ (see \Cref{d.UnifLocalLp} and \Cref{l.UnifLocBoundOnVa}).  If, e.g., $K_d\sim (1 + |x|)^{-1-\alpha}$ for $\alpha>0$, it seems one could replace $\phi$ in \Cref{d.UnifLocalLp} by $(1 + |\eul x|)^{-1-\alpha}$ and reproduce the same estimate. 

Finally, let us discuss the role of ``minimal speed'' traveling waves.  We say that a traveling wave $(c^*,U^*,V^*)$ is a minimal speed traveling wave if $c^*\leq c$ for all traveling wave solutions $(c,U,V)$.  Often, for equations with a logistic reaction term such as~\eqref{eq.model}, there is a traveling wave for every speed $c\geq c_*$ but the minimal speed one is stable with regard to ``localized'' initial data~\cite{AronsonWeinberger}.  One might suspect that there is an infinite half-line of speeds here as well; however, we do not address this, nor do we address whether the wave we construct is the minimal speed one.  On the other hand, the procedure we implement ``should'' return the minimal speed wave, as it does when $\chi = 0$.  The stability of the wave that we construct is a subtle issue.  As we discuss in \Cref{s.numerics}, it seems solutions starting from Heaviside initial data form a speed $2$ traveling wave when $\chi \leq (1 + \sqrt d)^2$; however, it appears they form a ``pulsating front'' otherwise (see~\cite[Section~2.2]{XinBook} for a definition, as well as~\cite{SKT} for the first work introducing the concept).  We leave the discussion of this to the sequel.

\subsection{Discussion of the proof}\label{s.discussion}

One standard procedure for establishing the existence of traveling wave solutions for similar reaction-diffusion systems is to first consider the ``slab problem," i.e., \eqref{eq.TWSmodel} but on a finite interval $[-a,a]$ for $a\gg1$ with the boundary conditions $U_a(-a)=1$ and $U_a(a)=0$. One then obtains existence of solutions on sufficiently large slabs via the Leray-Schauder topological degree theory, the key step of which is to  establish suitable bounds on $c_a$ and $U_a$.  In order to take the limit as $a\to\infty$ to obtain the traveling wave solution, it is crucial that the established bounds depend only on $d$ and $\chi$ and not on $a$. 


\subsubsection*{Main difficulties}

For the particular model considered in this article, the difficult part of this approach is establishing the uniform bounds on $c_a$ and $U_a$. Previous works on traveling wave solutions of~\eqref{eq.model}, assume a smallness condition on $d$ and $\chi$ when $\chi>0$, whereas we do not. To illustrate why such a condition is helpful, we briefly outline the argument that establishes an $L^\infty$-bound on $U_a$.  Suppose $\sfrac{\chi}{d}<1$ and $U_a$ achieves its maximum $M$ at $x_M\in(-a,a)$. By combining the two equations in \eqref{eq.TWSmodel}, we have
\begin{equation}\label{10.03.2}
    -c_aU_a'+\chi U_a'V_a'
    =
    U_a''+U_a\mleft(1-U_a-\frac{\chi}{d}(U_a-V_a)\mright).
\end{equation}
Thus, at $x_M$, we have
\begin{equation}\label{e.c120201}
    \begin{split}
    0	&= -c_aU_a'+\chi U_a'V_a'
    =U_a''+M\mleft(1-M-\frac{\chi}{d}(M-V_a)\mright)
    \\&
    \leq 
    M\mleft(1-M+\frac{\chi}{d}(M-V_a)\mright)
    \leq M \mleft(1 - M +\frac{\chi}{d}M\mright),
    \end{split}
\end{equation}
which immediately yields the bound
\begin{equation}\label{10.03.1}
    \norm{U_a}_{L^\infty} = M \leq \frac{1}{1-\sfrac{\chi}{d}}.
\end{equation}
We note that a na\"ive phase plane argument using ``trapping region'' arguments leads to the same obstruction.

\subsubsection*{Tello-Winkler approach}

From \eqref{e.c120201}, we see that the only hope to make the above argument work without the smallness condition is to use the $V_a$ term that is dropped in~\eqref{e.c120201}.  This would require a lower bound on $V_a$; 
however, any lower bound on $V_a$ will necessarily require regularity estimates of $U_a$, which, in turn depend on $c_a$ due to~\eqref{eq.TWSmodel}.  On the other hand, the standard approach for estimating the speed $c_a$ for the slab problem yields
\begin{equation}\label{e.c120901}
    c_a \leq 2 + \mleft(\frac{\chi}{\sqrt d} + \frac{\chi}{d}\mright) \|V_a\|_{L^\infty}
\end{equation}
(see \Cref{l.c_a<=|U_a|}). 
Alas, we find ourselves in a `loop' of inequalities, which is the main difficulty in proving \Cref{t.thm}.


Our approach to bounding $c_a$ and $U_a$ is based on the method employed in \cite{TelloWinkler} with some modifications. The authors in \cite{TelloWinkler} consider a slightly more general model than~\eqref{eq.model} on a bounded domain $\Omega\subseteq\R^n$ supplemented with Neumann boundary conditions for $u$ and $v$.  

We outline their argument used to obtain a uniform bound on $\norm{u(t,\cdot)}_{L^\infty(\Omega)}$ now.  They first show $\norm{u(t,\cdot)}_{L^p(\Omega)}$ is uniformly bounded for values of $p>1$ near $1$.  This is done by multiplying~\eqref{eq.model} by $u^{p-1}$, leveraging the negative quadratic term in $u(1-u)$, and carefully integrating by parts to obtain
\begin{equation}
    \frac{1}{p}\dv{}{t}\int_\Omega u^p \d x +(p-1)\int_\Omega u^{p-2}\abs{\nabla u}^2 \d x
    \leq
    \mleft(\frac{\chi}{d}\frac{(p-1)}{p}-1\mright)\int_\Omega u^{p+1}\d x+\int_\Omega u^{p-1}\d x.\label{10.13.1}
\end{equation}
The coefficient of the first integral on the right side of \eqref{10.13.1} is made negative by choosing $p$ sufficiently close to $1$. 
As a result, the first term on the right controls the second term via H\"older's inequality:
\begin{equation}\label{e.102801}
	\int_\Omega u^{p-1}\d x
		\leq |\Omega|^\frac{1}{p} \mleft( \int_\Omega u^p \d x\mright)^\frac{p-1}{p}
	\quad\text{and}\quad
	\int_\Omega u^{p+1}\d x
		\geq \abs{\Omega}^{-p} \mleft( \int_\Omega u^p\d x \mright)^\frac{p+1}{p}.
\end{equation}
From here, one can use a dynamical argument to rule out $\|u\|_{L^p}$ of ever becoming ``too large.''  As is clear from~\eqref{e.102801}, the size of $\Omega$ affects the ultimate bound on $\|u(t,\cdot)\|_{L^p}$.  This is natural as we expect $u$ to equilibrate to an $O(1)$ steady solution, whence $\|u(t,\cdot)\|_{L^p} = O(|\Omega|^{1/p})$.

Afterwards, one can bootstrap the $L^p$-bound to higher $L^r$-estimates of $u$ by using the gradient term in~\eqref{10.13.1}.  The full regularity of $u$ follows by standard parabolic regularity arguments once the high enough integrability of $u$ is obtained.

\subsubsection*{Outline of the proof of \texorpdfstring{\Cref{t.thm}}{Theorem \ref{t.thm}}}

Two issues arise when applying this argument in our context: we require a bound that is independent of $a$ (the size of the domain), and we have no time dependence (recall we have made the traveling wave change of variables that turns the $u_t$ term into $-c U'$).  We discuss the former issue at greater length, and simply mention that the complication of the latter is to induce additional $c$ dependence in each estimate.

To overcome the first issue, we introduce a new variant of \textit{uniformly local $L^p$-spaces}.  Although we only use this with $p=2$, we state the general definition. 

\begin{definition}\label{d.UnifLocalLp}
Fix $p \in [1,\infty)$, $\eul>0$, and $\psi\in\{C_c^\infty(\R):\norm{\psi}_{L^1}=1\text{ and }0\leq\psi\leq1\}$.  Let $\phi=e^{-\eul\abs{x}}*\psi$, and, for any $s\in \R$, let $\phi_s(x) = \phi(x-s)$.  We define the \textbf{uniformly local $\boldsymbol{L^p}$-norm} of a measurable function $f$ as
\[
    \norm{f}_{\lul^p}
    =
    \sup_{s\in\R}\|\phi_s^{1/p} f\|_{L^p}
    = \sup_{s\in\R}\mleft(\int_{-\infty}^\infty \phi_s(x)\abs{f(x)}^p \d x \mright)^\frac{1}{p}.
\]
\end{definition}
\noindent  The original uniformly local $L^p$-spaces were introduced by Kato~\cite{kato1975cauchy} to construct solutions to a hyperbolic system of equations and involves a compactly supported $\phi$.  These spaces have been used in various contexts, e.g., the Boltzmann equation~\cite{AMUXY}, in the last half century.

As we describe below, the uniformly local $\lul^2$-space presents many advantages for us. For one, it localizes our estimates to a domain of ``size'' $O(\sfrac1\eul)$, which, to a degree, solves the problem of not working on a finite domain.  
An important feature of the uniformly local norm is that, when $\eul < \sfrac{1}{\sqrt{d}}$, we have
\begin{equation}\label{e.c040401}
    \|V_a\|_{L^\infty}
    \leq 
    C\norm{U_a}_{\lul^2},
\end{equation}
which is a consequence of the fact that $K_d\leq C\phi$. (See \Cref{l.varphiBounds} and \Cref{l.UnifLocBoundOnVa}.)


With the observations above and some care, one can prove an energy estimate in this uniformly local space of the form (\Cref{l.energy}):
\begin{equation}\label{e.c120902}
	\|U_a\|_{\lul^2}^2
		\leq C \mleft( \frac{1}{\eul} + \eul c_a^2\mright).
\end{equation}
Actually, boundary condition at $x=-a$ causes some issues, so what we really prove is that, for each $s$,
\begin{equation}\label{e.c040403}
	\int_{x_1}^a \phi_s U_a^2
		\leq C \mleft( \frac{1}{\eul} + \eul c_a^2 + \sqrt{\eul} \|U_a\|_{\lul^2}^2\mright),
\end{equation}
whenever $\|U_a\|_{L^\infty} > 2$.  Here, $x_1$ is the leftmost element of the $\sfrac32$ level set of $U_a$
This ensures that $U_a'(x_1) \geq 0$ and $U_a'(a) \leq 0$, which are ``good'' signs for all of the boundary terms that arise.  
Additionally, notice that the term on the left in~\eqref{e.c040403} is comparable to the $\lul^2$-norm of $U_a$ up to $\eul$ factors.  This, and the ``small'' $\sqrt{\eul}$ factor in front of the $\lul^2$-term, allow us to deduce~\eqref{e.c120902}.

Notice that we have not decoupled the dependence of the norms of $U_a$ on $c_a$, yet; however, we have ``won'' a small parameter of $\eul$ in front of the ``bad'' term $c_a^2$.  Indeed, standard arguments show that
\[
	c_a
	\leq
	2+\chi\|V_a''\|_{L^\infty}+\chi\|V_a'\|_{L^\infty},
\]
and it is straightforward to check that $d \|V_a''\|_{L^\infty}, \sqrt{d}\|V_a\|_{L^\infty} \leq \|V_a\|_{L^\infty}$ (see \Cref{l.c_a<=|U_a|}).  Combining this with~\eqref{e.c120902} and~\eqref{e.c040401}, we find
\[
	c_a^2 \leq C \mleft( \frac{1}{\eul} + \eul c_a^2\mright),
\]
at which point we deduce the desired bound on $c_a$ by further decreasing $\eul$.  An $\lul^2$-bound on $U_a$ follows then from this bound on $c_a$ and~\eqref{e.c120902}.  At that point, we now have bounds on the coefficients of~\eqref{10.03.2} (recall~\eqref{e.c040401} for the bound on $V_a$ and \Cref{l.VaInftyBounds} for the bound on $V_a'$), so a standard argument to upgrade a local $L^2$-estimate to an $L^\infty$-bound may be applied.

%
%
%

We point out that the main estimate~\eqref{e.c120902} is, in fact, quite different from the estimate used in~\cite{TelloWinkler}, although inspired by it.

\subsection*{Notation}
Unless needed for clarity, we drop the integrand's dependence on the variable of integration, e.g., we write
\[
    \int_0^1 U
    \quad\text{instead of}\quad
    \int_0^1 U(x)\d x.
\]
Also, we suppress the dependence on the domain for function spaces' norms if the domain is $\R$. For example, we write $\norm{U}_{L^p}$ instead of $\norm{U}_{L^p(\R)}$. Lastly, $C$ represents a positive constant that may change line-by-line and depends only on $d$ and $\chi$.  In general, it is assumed that all constants and conditions depend on $d$ and $\chi$.

\section{Numerical simulations}\label{s.numerics}

The Cauchy problem of~\eqref{eq.model} is beyond the scope of this paper.  We performed numerical experiments to investigate this question.  
We essentially use the ``upwind" numerical scheme from \cite{fu2021sharp}. 
Numerical simulations inherently take place on a finite domain, so we impose Neumann boundary conditions and take care to stop the simulation long before the front approaches the boundary. 

Often (minimal speed) traveling waves are unique and stable, which means that solutions $u$ to~\eqref{eq.model} converge in a $ct + o(t)$ moving frame to the minimal speed traveling wave solution.  Further, in other simpler models, minimal speed traveling waves can be constructed by the process performed in this paper.  This suggests that one might expect the solution we construct here to be the minimal speed traveling wave and that it is the limit of solutions to the Cauchy problem.  We see, in the sequence, that this is not the case when $\chi$ is large.

The behavior of $u$ changes greatly as one changes $\chi$.  It was already known that when $\chi$ is sufficiently negative, traveling waves are ``sped up'', and that when $\chi$ is close enough to zero (with no constraint on the sign), the wave speed is the same as when $\chi = 0$, i.e., $c=2$~\cite{henderson2021slow, GrietteHendersonTuranova}.  What we find here is that the wave speed seems to not change as $\chi$ is increased.  On the other hand, a Hopf-type bifurcation occurs: the traveling wave seems to destabilize and a pulsating front appears and seems to be stable.

\begin{figure}[t]
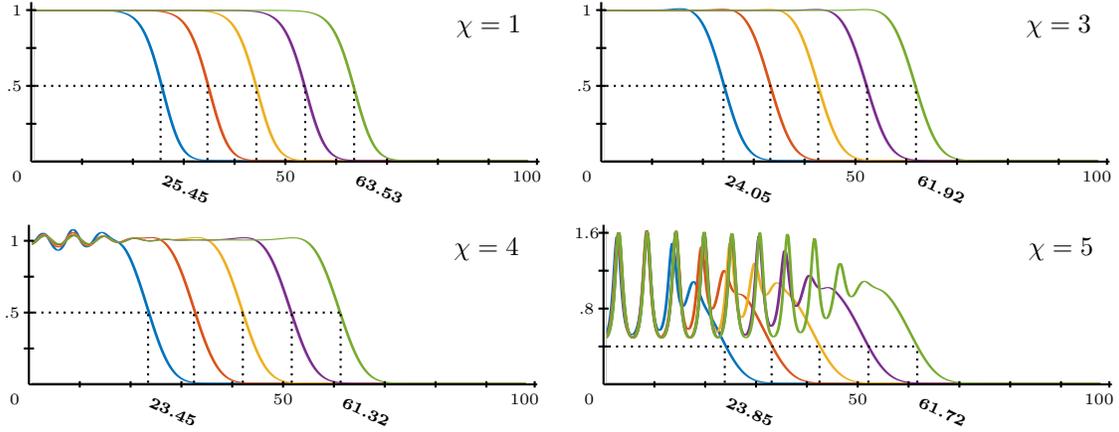

\centering
\speedplots
\caption{In all images above, $d=1$ with initial data $u_{\rm in} = \exp\{-2(x-10)_+^2/5\}$.  We took spatial step size $\Delta x = 0.2$ and time step size $\Delta t = (\Delta x)^2/10$.  In the first three plots above, we can approximately compute the speed by tracking the $\sfrac12$ level set, and in the last one, we use $\sfrac25$ level set (this avoids issues with the pattern making the $\sfrac12$ level set have multiple elements).  In a fixed plot, each curve is the profile $u$ at five units of time beyond the profile to its left; indeed, the curves were sampled at $t=10,15,20,25,30$.}
\label{f.speedplots}
\end{figure}

Let us look more closely at the speed first.  Computing the approximate speed by subtracting location of the leftmost level set from the rightmost and dividing by 20 (4 intervals of 5 units of time), we find:
\[
	\begin{aligned}
		&c_{\chi = 1} \approx \frac{63.53 - 25.45}{20}
			\approx 1.90,
		\qquad
		&c_{\chi = 3} \approx \frac{61.92 - 24.05}{20}
			\approx 1.89,
		\\&
		c_{\chi = 4} \approx \frac{61.32 - 23.45}{20}
			\approx 1.89,
		\qquad\text{ and } \qquad
		&c_{\chi = 5} \approx \frac{61.72 - 23.85}{20}
			\approx 1.89.
	\end{aligned}
\]
We note that, despite the clear change in qualitative behavior in \Cref{f.speedplots} as $\chi$ increases, the speed remains essentially constant.  Additionally, given the ``large'' $\Delta x$ and $\Delta t$ used in our simulations and the difficulties of computing the speed of waves numerically (see, e.g., the discussion in the introduction of~\cite{DemircigilFabreges}), it is seems heuristically clear that $c = 2$ is within the margin of error.  We do not investigate this more systematically; however, it is tempting to conjecture that $c=2$ for $\chi >0$.

\begin{figure}
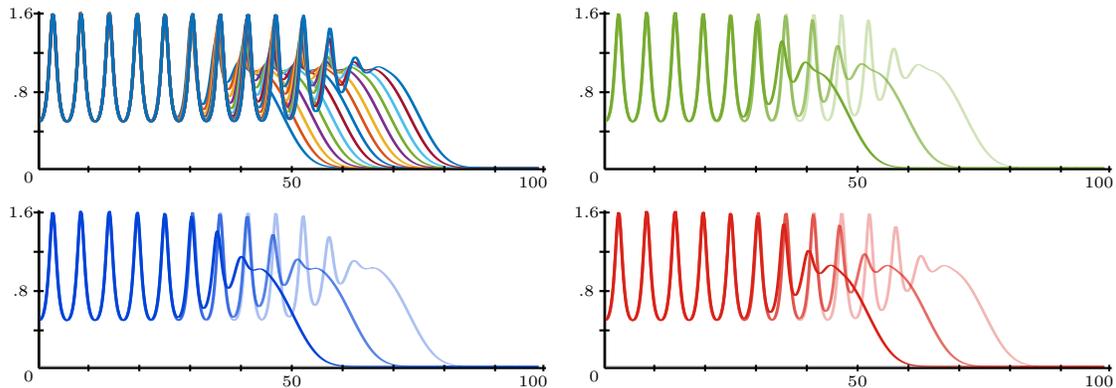

\begin{center}
\patternplots
\caption{The above images show plots for $\chi=5$ that reveal the time-periodic nature of the profile in the moving frame.  The top left plot is of $u(t,\cdot)$ for all integer $t$ between $24$ and $38$.  It appears that the profile is $3$ periodic in time.  To show this, the top right plot is of $t=24, 30, 36$, the bottom left plot is of $t=25,31,37$, and the bottom right plot is of $t=26,32,38$.  Notice that, when the time difference is a multiple of $3$ the profile appears to be an exact copy but shifted to the right.   (We took time differences of $6$ in order to minimize overcrowding in the plots, but the same features are present when the time difference is $3$.)}\label{fig2}
\end{center}
\end{figure}

We now turn our attention to the change in behavior that occurs when
\begin{equation}
	\chi = 4 = (1 + \sqrt d)^2 
\end{equation}
(recall that $d=1$ in our simulations).  In \Cref{f.speedplots}, we see periodic behavior begin to appear here.  For $\chi = 4$, the ``wiggles'' dissipate in time; however, they are permanent in the $\chi = 5$ plot.  In a sense, this is not unexpected in that a linear stability analysis of the constant state $u \equiv 1$ reveals its stability if and only if $\chi < (1 + \sqrt d)^2$~\cite{NadinPerthameRyzhik}.  
We note further work done on constructing patterns in similar models~\cite{DucrotFuMagal, FiedlerPolacik}.  
Numerically, it appears that $u$ converges (in a moving frame) to a {\em pulsating front} (see \Cref{fig2}); that is, a solution to~\eqref{eq.model} of the form $u(t,x) = U(x, x-ct)$, where $U$ is periodic in the second variable~\cite{SKT,XinBook}.  To our knowledge, it is an open question whether pulsating fronts exist for~\eqref{eq.model}.  Our numerics, however, suggest that these should exist and be stable when
\begin{equation}\label{e.c041101}
	\chi > (1 + \sqrt d)^2.
\end{equation}
To our knowledge, the existence of pulsating fronts that connect $0$ to a nonconstant (periodic) steady states have not been proven for any reaction-diffusion models.

Let us note the gap in the parameters $(\chi,d)$ between where previous works had constructed traveling waves (under the assumption~\eqref{e.c040202}) and where they are expected to no longer be stable (when~\eqref{e.c041101} holds).  Indeed, the interval
\[
	\min\{1,d\}
		\leq \chi
		\leq (1 + \sqrt d)^2
\]
can be arbitrarily large, depending on $d$.  It is in this regime that we expect our result \Cref{t.thm} to be relevant; however, proving the stability of the wave constructed here is beyond the scope of the current paper.

In view of recent results on finite domains~\cite{PainterHillen,WangWinklerXiang,WinklerJNS}, one might expect that the ``peaks'' behind the front in the pulsating front will grow to infinity as $\chi$ tends to infinity.  Interestingly, we did not observe this in our numerical simulations. Further study is necessary to determine if this is a limitation of the simulations or if it reflects the true behavior of the model.

\section{The problem on a finite slab}\label{sec.slab}

To establish the existence of a solution to \eqref{eq.TWSmodel}, we first show there exists a solution on every finite ``slab," i.e., on every finite interval $(-a,a)$, where $a>0$.  In order to guarantee the positivity of $U_a$, we also slightly adjust the equation.  To this end, consider \eqref{eq.TWSmodel} on $(-a,a)$ with added boundary conditions:
\begin{equation}\label{eq.SlabModel}
    \begin{cases}
    -c_aU_a'+\tau\chi(U_a\va')'
    =
    U_a''+(U_a)_+(1-U_a) 
    & 
    \text{in }(-a,a),\\
    U_a(-a)=1, 
    \quad
    U_a(a)=0,
    \quad
    \max_{x\geq0}\widetilde{U}_a(x)=\theta>0,
    \end{cases}
\end{equation}
where $x_+ := \max\{0,x\}$, $\theta\in(0,\sfrac14)$ is a small fixed parameter, $\tau\in[0,1]$ is a parameter used when we apply Leray-Schauder degree theory in \Cref{p.slab_existence}, and
\begin{equation}\label{eq.vaConv}
    \va=\widetilde{U}_a*K_d
    \qquad
    \text{where}
    \qquad
    \widetilde{U}_a(x)
    =
    \begin{cases}
    1 & \text{if } x\leq-a,\\
    U_a(x) & \text{if } x\in(-a,a),\\
    0 & \text{if } x\geq a.
    \end{cases}
\end{equation}
A direct computation with this definition of $\widetilde{U}_a$ and $\va$ yields
\begin{equation}\label{eq.va''}
    -d\va''=\widetilde{U}_a-\va \qquad \text{in }\R.
\end{equation}
Note that, on the slab, we have
\begin{equation}\label{eq.va''slab}
    -d\va''=U_a-\va \qquad \text{in }[-a,a].
\end{equation}
The subscripts indicate the solution's dependence on $a$. Note also that a solution $(c_a,U_a,\va)$ to \eqref{eq.SlabModel} also depends on $\theta$ and $\tau$, but, unless needed for clarity, we suppress this dependence in our notation.

Let us make two additional technical remarks.  First, note that $\|\widetilde{U}_a\|_{L^\infty}=\norm{U_a}_{L^\infty([-a,a])}$. Therefore, for notational convenience, we use $\norm{U_a}_{L^\infty}$ to denote both $\|\widetilde{U}_a\|_{L^\infty}$ and $\norm{U_a}_{L^\infty([-a,a])}$ whenever either term arises.

Second, note that first equation in~\eqref{eq.SlabModel} differs slightly from that of~\eqref{eq.model} because one of the $U_a$ terms is changed to $(U_a)_+$.  This allows us to immediately prove that $U_a>0$ (see \Cref{p.U_a>=0}), at which point we see that the first equation in~\eqref{eq.SlabModel} and that of~\eqref{eq.model} agree.

\subsection{Preliminaries}

In this section, we collect a few useful inequalities that are deployed in the main argument.  In particular, we establish the positivity of solutions to \eqref{eq.SlabModel}, as well as bounds on $\va$ (and its first two derivatives).  The results in this subsection are either proved using standard arguments or are straightforward to deduce.

\subsubsection{Estimates on \texorpdfstring{$U_a$}{Uₐ} and \texorpdfstring{$\va$}{Vₐ}}

First we point out some easy estimates on $V_a$ coming from $U_a$.

\begin{lemma}[Bounds on $\va$, $\va'$, and $\va''$]\label{l.VaInftyBounds}
If $(c_a,U_a,\va)$ solves \eqref{eq.SlabModel} and $U_a\geq 0$, then, for all $x\in \R$,
\begin{equation}\label{eq.PtWiseVaBound}
    |\va'(x)|
    \leq
    \frac{1}{\sqrt{d}}\va(x).
\end{equation}
\end{lemma}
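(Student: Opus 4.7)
The plan is to exploit the explicit convolution representation $V_a = \widetilde{U}_a * K_d$ from~\eqref{eq.vaConv}, splitting at the singularity of $|x-y|$ in the exponential.

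First, I would write
\[
	V_a(x) = A(x) + B(x), \qquad A(x) = \frac{1}{2\sqrt{d}}\int_{-\infty}^x \widetilde{U}_a(y) e^{-(x-y)/\sqrt d}\,dy,\quad B(x) = \frac{1}{2\sqrt{d}}\int_{x}^{\infty} \widetilde{U}_a(y) e^{-(y-x)/\sqrt d}\,dy.
\]
Note that since $U_a \geq 0$ and $\widetilde{U}_a$ equals $1$, $U_a$, or $0$ depending on whether $x \leq -a$, $x \in (-a,a)$, or $x \geq a$, we have $\widetilde{U}_a \geq 0$ on $\R$, which forces $A, B \geq 0$ pointwise.

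Second, differentiating each half using Leibniz's rule (the boundary contributions at $y=x$ exactly cancel between $A$ and $B$) yields
\[
	A'(x) = \frac{1}{2\sqrt d}\widetilde{U}_a(x) - \frac{1}{\sqrt d}A(x), \qquad B'(x) = -\frac{1}{2\sqrt d}\widetilde{U}_a(x) + \frac{1}{\sqrt d}B(x),
\]
so that
\[
	V_a'(x) = A'(x) + B'(x) = \frac{1}{\sqrt d}\bigl(B(x)-A(x)\bigr).
\]

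Third, since $A(x), B(x) \geq 0$, the elementary inequality $|B-A| \leq A+B$ gives
\[
	|V_a'(x)| = \frac{1}{\sqrt d}|B(x)-A(x)| \leq \frac{1}{\sqrt d}\bigl(A(x)+B(x)\bigr) = \frac{1}{\sqrt d}V_a(x),
\]
which is the claimed estimate. There is no real obstacle here; the only mild subtlety is justifying differentiation under the convolution integral, which follows from the exponential decay of $K_d$ and boundedness of $\widetilde{U}_a$, and remembering that the hypothesis $U_a \geq 0$ is what transfers positivity of $A$ and $B$ into a pointwise absolute-value bound. (A companion bound $|V_a''| \leq V_a/d$ that would be needed later follows by the same argument applied to the identity $dV_a'' = V_a - \widetilde{U}_a$ from~\eqref{eq.va''}, but that is not claimed in the statement at hand.)
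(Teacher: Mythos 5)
Your proof is correct and rests on exactly the same identity the paper exploits: the paper differentiates the convolution kernel directly, getting $K_d'(y)=-\tfrac{\sign(y)}{\sqrt d}K_d(y)$, then applies the triangle inequality using $\widetilde U_a\ge 0$; your splitting of the integral at $y=x$ into $A$ and $B$ is just an unwrapping of that $\sign$ function, so the two arguments are the same computation written in slightly different notation.
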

\begin{proof}
This follows from a simple computation using 
\[
    \abs{\va'(x)}
    =
    |(K_d'*\widetilde{U}_a)(x)|
    =
    \frac{1}{\sqrt{d}}\abs{\int_{-\infty}^\infty -\sign(y)K_d(y)\widetilde{U}_a(x-y)\d y}
    \leq
    \frac{1}{\sqrt{d}}\va(x).\qedhere
\]
\end{proof}

Next, we check that $U_a$ is, in fact, nonnegative.

\begin{lemma}[Nonnegativity of $U_a$ and $\va$]\label{p.U_a>=0}
If $(c_a,U_a,\va)$ solves \eqref{eq.SlabModel}, then both $U_a$ and $\va$ are nonnegative on $[-a,a]$ and positive in $(-a,a)$.
\end{lemma}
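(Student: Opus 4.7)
There are four statements to prove: (i) $U_a\geq 0$ on $[-a,a]$; (ii) $\va\geq 0$ on $\R$; (iii) $U_a>0$ in $(-a,a)$; (iv) $\va>0$ in $(-a,a)$. Items (ii) and (iv) will follow immediately from (i) and (iii) via the convolution representation~\eqref{eq.vaConv} together with the positivity of $K_d$, so the content lies in (i) and (iii). The overall idea is a pointwise maximum-principle argument at an interior extremum of $U_a$, with the crucial new input being that the convolution formula for $\va$ produces the right sign for $\va''$ at a would-be negative minimum of $U_a$.

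For (i), I argue by contradiction. Suppose $m:=\min_{[-a,a]}U_a<0$. Since $U_a(-a)=1$ and $U_a(a)=0$, the minimum is attained at some interior $x_0\in(-a,a)$ at which $U_a'(x_0)=0$, $U_a''(x_0)\geq 0$, and $(U_a(x_0))_+=0$. Expanding the chemotaxis term in~\eqref{eq.SlabModel} and evaluating at $x_0$ collapses the equation to
\[
U_a''(x_0)=\tau\chi\,\va''(x_0)\,U_a(x_0).
\]
Now when $\tau>0$ the convolution structure of $\va$ forces $\va''(x_0)>0$: indeed $\widetilde{U}_a\geq m$ pointwise, with $\widetilde{U}_a\equiv 1>0>m$ on $(-\infty,-a]$, so positivity of $K_d$ gives
\[
\va(x_0)=(K_d*\widetilde{U}_a)(x_0)>m=U_a(x_0),
\]
and then $-d\va''=\widetilde{U}_a-\va$ yields $\va''(x_0)=(\va(x_0)-U_a(x_0))/d>0$. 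Since $\chi>0$ and $U_a(x_0)<0$, the displayed identity forces $U_a''(x_0)<0$, contradicting $U_a''(x_0)\geq 0$. In the degenerate case $\tau=0$ the equation decouples from $\va$, and on the connected component $I\subset(-a,a)$ of $\{U_a<0\}$ containing $x_0$ it reduces to $U_a''+c_aU_a'=0$; the Cauchy data $U_a(x_0)=m$, $U_a'(x_0)=0$ force $U_a\equiv m$ on $I$, which contradicts $U_a=0$ at the endpoints of $I$ (these lie in $(-a,a]$ since $U_a(-a)=1$).

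With (i) in hand, (ii) is immediate. For (iii), if $U_a(x^*)=0$ for some $x^*\in(-a,a)$, then $U_a\geq 0$ has an interior minimum there so $U_a'(x^*)=0$. Since $\va$ is smooth (as a convolution with $K_d$) and $U_a\geq 0$ gives $(U_a)_+(1-U_a)=U_a(1-U_a)$ near $x^*$, equation~\eqref{eq.SlabModel} is a linear second-order ODE with smooth coefficients there, and the IVP with zero Cauchy data has $U_a\equiv 0$ as its unique solution, contradicting $U_a(-a)=1$. Item (iv) then follows directly from $\va=K_d*\widetilde{U}_a$, since $K_d>0$ everywhere and $\widetilde{U}_a\not\equiv 0$ (it equals $1$ on $(-\infty,-a]$). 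The main obstacle, I expect, is step (i): the naive pointwise maximum principle is inconclusive because the coefficient $-\tau\chi\va''$ in front of $U_a$ has no a priori sign; the essential new ingredient is that the convolution representation of $\va$ produces the strict inequality $\va(x_0)>U_a(x_0)$ at a putative negative minimum, giving the needed sign on $\va''(x_0)$ exactly where it is needed.
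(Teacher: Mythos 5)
Your proof is correct and follows the paper's approach: at a negative interior minimum $x_0$ of $U_a$ the equation collapses to $U_a''(x_0)=\tau\chi V_a''(x_0)U_a(x_0)$, and the convolution representation $V_a=K_d*\widetilde{U}_a$ forces $V_a(x_0)>U_a(x_0)$ and hence $V_a''(x_0)>0$, producing the sign contradiction. You also explicitly dispatch $\tau=0$ (where the paper's displayed chain degenerates to $0\leq 0$ and gives no contradiction) via a constant-solution argument on a component of $\{U_a<0\}$, and you derive strict positivity from ODE uniqueness at an interior zero rather than the strong maximum principle the paper invokes; both are minor variants that close the same gaps.
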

\begin{proof}
Note that the result follows by the strong maximum principle once we show that
\begin{equation}\label{e.c032101}
	\min_{[-a,a]} U_a \geq 0.
\end{equation}
We establish~\eqref{e.c032101} by contradiction.  Suppose that there exists $x_0$ such that
\begin{equation}\label{e.c032501}
	U_a(x_0) = \min_{[-a,a]} U_a < 0.
\end{equation}
Then, due to the boundary conditions~\eqref{eq.SlabModel}, $x_0 \in (-a,a)$.  We see from~\eqref{eq.SlabModel} and~\eqref{eq.va''slab} that, at $x_0$,
\begin{equation}\label{e.c032102}
	\begin{split}
	0 &\leq U_a'' + 0
		= U_a'' + (U_a)_+ (1- U_a)
		= - c_a U_a' + \tau \chi (U_a' V_a' + U_a V_a'')
		\\&
		= \tau \chi U_a V_a''
		= \tau \chi U_a (V_a - U_a).
	\end{split}
\end{equation}
We note that $U_a(x_0) < 0$. 
Additionally, using~\eqref{eq.vaConv},~\eqref{e.c032501}, and the fact that $\widetilde U_a$ is nonconstant, we find
\begin{equation}
	V_a(x_0)
		= K_d* \widetilde U_a(x_0)
		> \min \widetilde U_a = U_a(x_0).
\end{equation}
It follows that, at $x_0$,
\begin{equation}
	U_a (V_a - U_a) < 0,
\end{equation}
which contradicts~\eqref{e.c032102} and concludes the proof.
\end{proof}

\subsubsection{A lower bound on the speed}

We must ensure that $c_a$ remains uniformly positive.  The first step to doing that is to show that $V_a$ and $U_a$ are not ``too big'' far on the right.  


\begin{lemma}\label{l.v v'<theta}
There exits $L>0$, such that, for all $x\geq L$,
\begin{equation}\label{eq.v v'<theta}
    \va(x)\leq2\theta
    \qquad
    \text{and}
    \qquad
    \abs{\va'(x)}\leq\frac{2}{\sqrt{d}}\theta.
\end{equation}
The constant $L$ is independent of $a$ but does depend on $\|U_a\|_{L^\infty}$ and $\theta$.
\end{lemma}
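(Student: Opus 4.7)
\medskip

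\noindent\textbf{Proof proposal.} The plan is to exploit the convolution formula $V_a = K_d * \widetilde U_a$ together with the key fact that, by the third boundary condition in~\eqref{eq.SlabModel} and the definition~\eqref{eq.vaConv}, we have $\widetilde U_a(y) \leq \theta$ for every $y \geq 0$, while $\widetilde U_a(y) \leq \|U_a\|_{L^\infty}$ for $y<0$. Once the bound on $V_a$ is obtained, the pointwise derivative bound will be immediate from \Cref{l.VaInftyBounds}, which gives $|V_a'(x)| \leq V_a(x)/\sqrt{d}$.

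For the bound on $V_a$, I would split the convolution integral at $y=0$: for $x\geq 0$,
\begin{equation*}
    V_a(x) = \int_{-\infty}^{0} K_d(x-y)\widetilde U_a(y)\d y + \int_{0}^{\infty} K_d(x-y)\widetilde U_a(y)\d y.
\end{equation*}
The second integral is bounded by $\theta \int_{0}^{\infty} K_d(x-y)\d y \leq \theta \int_{-\infty}^{\infty} K_d \d y = \theta$, using that $\widetilde U_a \leq \theta$ on $[0,\infty)$ and that $K_d$ is a probability density. For the first integral, using $\widetilde U_a \leq \|U_a\|_{L^\infty}$ and the explicit form of $K_d$,
\begin{equation*}
    \int_{-\infty}^{0} K_d(x-y)\widetilde U_a(y)\d y
    \leq \|U_a\|_{L^\infty}\int_{x}^{\infty} K_d(z)\d z
    = \tfrac{1}{2}\|U_a\|_{L^\infty}\, e^{-x/\sqrt{d}}.
\end{equation*}

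Combining these two estimates gives $V_a(x) \leq \theta + \tfrac{1}{2}\|U_a\|_{L^\infty}\, e^{-x/\sqrt{d}}$ for every $x\geq 0$, independently of $a$. Choosing
\begin{equation*}
    L := \sqrt{d}\,\log\mleft(\frac{\|U_a\|_{L^\infty}}{2\theta}\mright)_+,
\end{equation*}
which depends only on $\|U_a\|_{L^\infty}$ and $\theta$, ensures $\tfrac{1}{2}\|U_a\|_{L^\infty}e^{-x/\sqrt{d}}\leq \theta$ for $x\geq L$, and hence $V_a(x)\leq 2\theta$. The derivative bound then follows by applying \Cref{l.VaInftyBounds}.

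There is no real obstacle here; the argument is essentially a direct computation. The only mild subtlety is ensuring that $L$ does not depend on $a$, which works because the decay factor $e^{-x/\sqrt d}$ comes only from the kernel $K_d$, and because both $\theta$ and the bound $\|U_a\|_{L^\infty}$ (which will later be controlled uniformly in $a$) do not reference the slab width.
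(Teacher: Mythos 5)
Your proof is correct and takes essentially the same approach as the paper: split the convolution $V_a = K_d * \widetilde U_a$ at the point where the argument of $\widetilde U_a$ crosses zero, use $\widetilde U_a \leq \theta$ on $[0,\infty)$ for one piece and the smallness of the tail of $K_d$ for the other, then obtain the derivative bound from \Cref{l.VaInftyBounds}. The only difference is cosmetic — you evaluate the tail integral $\int_x^\infty K_d$ explicitly to give a closed-form $L$, whereas the paper picks $L$ abstractly from $\|K_d\|_{L^1}=1$.
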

\begin{proof}
Since $\|K_d\|_{L^1} = 1$ (see~\eqref{eq.vConv}), we may choose $L>0$ large enough so that
\begin{equation}\label{e.c032502}
    \int_L^\infty K_d\leq\norm{U_a}_{L^\infty}\inv\theta.
\end{equation}
Then, since $\max_{x\geq0}\widetilde{U}_a(x)=\theta$ (see~\eqref{eq.SlabModel}), we have, for all $x\geq L$,
\[
	\begin{split}
		V_a(x)
			&= \int_{-\infty}^{\infty} K_d(y) \widetilde{U}_a(x-y) \d y
			= \int_x^\infty K_d(y)\widetilde{U}_a(x-y) \d y
				+ \int_{-\infty}^x K_d(y) \widetilde{U}_a(x-y) \d y
			\\&
			\leq  \int_x^\infty K_d(y) \|U_a\|_{L^\infty} \d y
				+ \int_{-\infty}^x K_d(y) \theta \d y
			\leq \int_L^\infty K_d(y) \|U_a\|_{L^\infty} \d y
				+ \theta
			\leq 2\theta.
	\end{split}
\]
In the last step, we used~\eqref{e.c032502}.
%
%
%
The remaining inequality in \eqref{eq.v v'<theta} follows by applying the bound on $\abs{\va'}$ from \eqref{eq.PtWiseVaBound}.  This concludes the proof.
\end{proof}

\begin{lemma}[Lower bound on the speed]\label{p.lowerboundspeed}
If $(c_a,U_a,\va)$ is a solution to \eqref{eq.SlabModel} with $c_a \geq 0$, then, for all $\e>0$, there exists $a_\e>0$ and $\theta_\e>0$ such that, for all $a>a_\e$ and for all $\theta\in(0,\theta_\e)$, 
\begin{equation}\label{eq.ca>2-e}
    c_a\geq 2-\e.
\end{equation}
The parameters $a_\eps$ and $\theta_\eps$ depend only on $\|U_a\|_{L^\infty}$.
\end{lemma}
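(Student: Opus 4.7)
The plan is to argue by contradiction: assume $c_a < 2 - \e$ and derive a contradiction using the positivity of $U_a$ guaranteed by \Cref{p.U_a>=0}. The heuristic is familiar from classical FKPP theory. On the region $[L, a]$ where, by \Cref{l.v v'<theta} and the identity $V_a'' = (V_a - U_a)/d$ from \eqref{eq.va''slab}, the quantities $U_a$, $V_a$, $V_a'$, and $V_a''$ are all $O(\theta)$, the first equation of \eqref{eq.SlabModel} is a small perturbation of the linear equation $U'' + cU' + U = 0$, whose positive decaying solutions require $c \geq 2$. A Sturm comparison argument will make this precise.

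More concretely, since $U_a > 0$ on $[L,a)$, expanding $(U_a V_a')' = U_a' V_a' + U_a V_a''$ and rearranging yields
\[
    U_a'' + \tilde c(x)\, U_a' + \tilde r(x)\, U_a = 0,
    \qquad
    \tilde c := c_a - \tau\chi V_a',
    \qquad
    \tilde r := 1 - U_a - \tau\chi V_a'',
\]
on $[L, a]$, with $|\tilde c - c_a|$, $|\tilde r - 1|$, and $|\tilde c'| = |\tau\chi V_a''|$ all bounded by $C\theta$ for some $C = C(\chi, d)$. Substituting $U_a = e^{-\alpha} W$ with $\alpha'(x) = \tilde c(x)/2$ eliminates the first-order term to produce
\[
    W'' + q(x)\, W = 0,
    \qquad
    q(x) := \tilde r - \tfrac{\tilde c^2}{4} - \tfrac{\tilde c'}{2} = \bigl(1 - \tfrac{c_a^2}{4}\bigr) + O(\theta).
\]
In the contradiction regime $c_a \in [0, 2 - \e)$, the quantity $1 - c_a^2/4$ is bounded below by $\e - \e^2/4$, so choosing $\theta_\e$ sufficiently small (depending only on $\e$, $\chi$, $d$, and $\|U_a\|_{L^\infty}$) forces $q(x) \geq \e/2$ uniformly on $[L, a]$.

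Comparing $W'' + qW = 0$ with the constant-coefficient equation $Z'' + (\e/2) Z = 0$, whose solution $Z(x) = \sin\bigl(\sqrt{\e/2}\,(x - L)\bigr)$ has consecutive zeros at $L$ and $L + \pi\sqrt{2/\e}$, the Sturm comparison theorem forces $W$ to vanish at some point in $(L, L + \pi\sqrt{2/\e}\,]$. But $W = e^{\alpha} U_a$ is strictly positive on $[L, a)$ by \Cref{p.U_a>=0}, with $W(a) = 0$, so the only admissible zero of $W$ on $[L, a]$ is at $x = a$. This forces $a \leq L + \pi\sqrt{2/\e}$, and choosing $a_\e > L + \pi\sqrt{2/\e}$ yields the required contradiction. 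The main subtlety is the finite-domain boundary condition $U_a(a) = 0$, which provides $W$ with a ``free'' zero at $x = a$; the length requirement on $a$ is precisely what is needed so that Sturm produces a zero strictly before $a$. Quantifying the chemotactic perturbations is otherwise routine via \Cref{l.VaInftyBounds}, \Cref{p.U_a>=0}, and \Cref{l.v v'<theta}, noting that $c_a$ is automatically bounded above by $2$ in the contradiction regime.
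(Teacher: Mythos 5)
Your proof is correct and yields the desired conclusion, but by a genuinely different (and more standard) route than the paper's. Both arguments restrict to the region $[L,a]$ of \Cref{l.v v'<theta} where $U_a$, $V_a$, $V_a'$, and $V_a''$ are all $O(\theta)$, and both exploit the strict positivity of $U_a$ on $(-a,a)$ from \Cref{p.U_a>=0}. You then pass to the Liouville gauge $U_a = e^{-\alpha}W$ with $\alpha' = \tilde c/2$, obtain the self-adjoint form $W'' + qW = 0$ with $q \geq \e/2$ once $\theta$ is small, and invoke the Sturm comparison theorem against $\sin\bigl(\sqrt{\e/2}\,(x-L)\bigr)$ to force an interior zero of $W$, contradicting positivity. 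The paper instead constructs the explicit test function $\beta_A(x) = A^{-1}e^{-\lambda x}\cos^2\bigl(\tfrac{\pi}{2R}(x-L-R)\bigr)$ on $[L,a-1]$, raises $A$ until $\beta_A$ touches $U_a$ from below, and evaluates the PDE inequality at the touching point; the choice $\lambda = c_a/2$ there is precisely your Liouville change of variables carried out implicitly inside the test function, and the ensuing sign computation is a bare-hands rendering of the same oscillation phenomenon. Your version is arguably cleaner because it separates the gauge change from the oscillation argument and cites a named theorem; the paper's version is fully self-contained and is reused verbatim in the proof of \Cref{p.speed} with $\theta$ replaced by $\theta_L$. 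Two small points worth spelling out in a polished write-up: the lower bound $q\ge\e/2$ from $\e-\e^2/4$ requires $\e\le 2$ (harmless, since the claim is vacuous for $\e\ge 2$, and the paper itself reduces to $\e\in(0,1)$), and bounding the cross term $c_a\tau\chi V_a'/2$ inside $\tilde c^2/4$ by $C\theta$ uses the contradiction hypothesis $c_a<2$, which you should invoke before, not only after, writing $q = (1-c_a^2/4)+O(\theta)$.
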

\begin{proof}
We argue by contradiction. Suppose $\e>0$ and $c_a<2-\e$. Note that it is enough to prove the estimate under the assumption that $\eps \in (0,1)$. 

Fix $\theta \in (0,\theta_\eps)$ for $\theta_\eps$ to be determined, and let $L$ be as in \Cref{l.v v'<theta}.  Let 
\begin{equation}\label{e.c032601}
	R = \frac{a-1 - L}{2}.
\end{equation}
For $A,\lambda>0$ to be chosen, let
\begin{equation}\label{e.c040501}
    \beta_A(x)
    =
    \frac{1}{A}e^{-\lambda x}\cos\mleft(\frac{\pi}{2R}(x-L-R)\mright)^2
    \qquad
    \text{for }x\in[L,L+2R] = [L, a-1].
\end{equation}

Since $U_a$ is positive on $[L,L+2R]$, we have, by continuity, that $\beta_A<U_a$ on $[L,L+2R]$ if $A$ is sufficiently large. Thus, the following quantity is well-defined:
\[
    A_0=\inf\{A>0:\beta_A<U_a\text{ on }[L,L+2R]\}.
\]
Let $\beta=\beta_{A_0}$. By continuity, there exists $x_0\in [L,L+2R]$ such that $\beta(x_0)=U_a(x_0)$. Moreover, since $\beta(L)=\beta(L+2R)=0$ while both $U_a(L)$ and $U_a(L+2R) = U_a(a-1)$ are positive, it follows that $x_0\in(L,L+2R)$. Also, note that $U_a-\beta$ is nonnegative and attains a minimum of $0$ at $x_0$. As a result, we have
\begin{equation}\label{02.28.1}
    U_a(x_0)=\beta(x_0),\qquad
    U_a'(x_0)=\beta'(x_0),
    \qquad\text{and}\qquad
    (U_a-\beta)''(x_0)\geq0.
\end{equation}
From \eqref{eq.SlabModel}, \eqref{eq.va''}, and \eqref{02.28.1}, we deduce that, at $x_0$, 
\[
\begin{split}
    0
    &\leq
    (U_a-\beta)''
    =-c_aU_a'+\tau\chi \va'U_a'+\tau\chi U_a\mleft(\frac{\va-U_a}{d}\mright)-U_a(1-U_a)-\beta'' \nonumber\\
    &\leq
    -c_a\beta'+\tau\chi \va' \beta' +\frac{\tau\chi}{d}\va\beta-\beta(1-U_a)-\beta''
    \leq     -c_a\beta'+\tau\chi \va' \beta' +\frac{2\theta \tau\chi}{d}\beta-\beta(1-\theta)-\beta''.
\end{split}
\]
In the last step, we used that $U_a \leq \theta$ on $[0,a]$ (see~\eqref{eq.SlabModel}) and $V_a \leq 2\theta$ on $[L, a]$.  
Then, using the explicit formula~\eqref{e.c040501}, multiplying by $A_0 e^{\lambda x_0}$, and using the shorthand $z_0 = (\pi/2R) (x_0 - L - R)$, we find
\[
	\begin{split}
		0
			&\leq (\tau\chi V_a' -c_a) \mleft( -\lambda \cos(z_0)^2 - \frac{\pi}{R} \cos(z_0) \sin(z_0)\mright)
				+ \mleft(\frac{2\theta \tau\chi}{d} +\theta - 1\mright) \cos(z_0)^2 
			\\&
			\qquad
			- \mleft( \lambda^2 \cos(z_0)^2
				+ \frac{2\lambda \pi}{R} \cos(z_0) \sin(z_0)
				+ \frac{\pi^2}{2R^2} \sin(z_0)^2
				- \frac{\pi^2}{2R^2} \cos(z_0)^2\mright)
			\\&
			=  - \frac{\pi^2}{2R^2} \sin(z_0)^2
				- \mleft(1 + \lambda^2 - \frac{\pi^2}{2R^2} - \theta \Big( \frac{2\tau\chi}{d} + 1\Big) + \lambda (\tau\chi V_a' - c_a) \mright) \cos(z_0)^2
			\\&
			\qquad
				+ \frac{\pi}{R} \mleft( c_a - \tau\chi V_a' - 2\lambda\mright) \cos(z_0)\sin(z_0).
\end{split}
\]
The first two terms are negative, and this fact eventually leads to our contradiction.  The third term's sign, however, is indeterminate, so we choose $\lambda = c_a/2$ in order to make this final term small (recall $\sqrt d |V_a'|, V_a \leq 2\theta$ due to \Cref{l.v v'<theta}).  Then
\[
	\begin{split}
		0
			&\leq - \frac{\pi^2}{2R^2} \sin(z_0)^2
				- \mleft(1 + \frac{c_a^2}{4} - \frac{\pi^2}{2R^2} - \theta \Big( \frac{2\tau\chi}{d} + 1\Big) + \frac{c_a\tau\chi}{2}  V_a' - \frac{c_a^2}{2} \mright) \cos(z_0)^2
			\\&
			\qquad
				- \frac{\pi}{R}\tau \chi V_a'  \cos(z_0)\sin(z_0).
			\\&
			= - \frac{\pi^2}{2R^2} \sin(z_0)^2
				- \mleft(1 - \frac{c_a^2}{4} - \frac{\pi^2}{2R^2} - \theta \Big( \frac{2\tau\chi}{d} + 1\Big) - \frac{c_a\tau\chi}{2} V_a' \mright) \cos(z_0)^2
				- \frac{\pi}{R} \tau\chi V_a'  \cos(z_0)\sin(z_0).
\end{split}
\]
Using first the bounds that $0 \leq c_a < 2-\eps$ and the bounds $\sqrt d |V_a'|, V_a \leq 2\theta$ and then Young's inequality, we find
\[
	\begin{split}
		0
			&\leq  - \frac{\pi^2}{2R^2} \sin(z_0)^2
				- \mleft(1 - \Big(1 - \frac\eps2\Big)^2 - \frac{\pi^2}{2R^2} - \theta \Big( \frac{2\tau\chi}{d} + 1\Big) -  \frac{\tau\chi \theta}{\sqrt d}\mright) \cos(z_0)^2
			\\&
			\qquad - \frac{\pi\tau \chi \theta}{R \sqrt d}|\cos(z_0)\sin(z_0)|
			\\&
			\leq - \mleft(1 - \Big(1 - \frac\eps2\Big)^2 - \frac{\pi^2}{2R^2} - \theta \Big( \frac{2\tau\chi}{d} + 1\Big) -  \frac{\tau\chi \theta}{\sqrt d} - \frac{\tau^2\chi^2 \theta^2}{2 d}\mright) \cos(z_0)^2.
\end{split}
\]
Recall $z_0 \in (-\sfrac\pi2,\sfrac\pi2)$ since $x_0 \in (L, L + 2R)$.  Then, $\cos(z_0) > 0$, and we deduce that the above is negative if $R$ is sufficiently large (which necessitates the largeness of $a$) and $\theta$ is sufficiently small.  This is a contradiction and concludes the proof.
\end{proof}

\subsubsection{Preliminaries on uniformly local \texorpdfstring{$L^p$}{Lᵖ}-spaces}

We present some preliminary estimates on $\va$ and $\phi$ involving uniformly local $L^p$-spaces.  The next two estimates rely on the usefulness of our version of the uniformly local spaces.

First, we compile the behavior of $\phi$.  Importantly, $\phi'$ and $\phi''$ are ``like'' $\phi$ but much smaller (recall that we eventually choose $\eul \ll 1$).  Also, crucially, $K_d$ and $\phi$ can be compared~\eqref{eq.varphiKdBound}.  The proof is omitted as it is elementary.

\begin{lemma}\label{l.varphiBounds}
Suppose that $\eul \in (0,1/\sqrt d)$.  The function $\phi$, defined in \Cref{d.UnifLocalLp}, satisfies the following: there exists $C>0$ such that, for all $x\in \R$,
\begin{equation}\label{eq.varphiBounds}
	\begin{split}
		&Ce^{-\eul\abs{x}}\leq\phi(x)\leq Ce^{-\eul\abs{x}},
		\qquad
		\abs{\phi(x)}\leq1,
		\\&
		\abs{\phi'(x)}\leq\eul\phi(x),
		\qquad\text{ and } \qquad
		\abs{\phi''(x)}\leq\eul^2\phi(x).
	\end{split}
\end{equation}
Additionally, for any $s$ and $x$,
\begin{equation}\label{eq.varphiKdBound}
    K_d(x - s)
    \leq
    \frac{C}{\sqrt d}\phi_s(x).
\end{equation}
\end{lemma}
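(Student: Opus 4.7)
The plan is to verify each of the four estimates by a direct computation from the explicit formula $\phi = e^{-\eul|\cdot|}*\psi$, relying only on the properties that $\psi$ is nonnegative, compactly supported, and of unit mass, together with the standing hypothesis $\eul < 1/\sqrt d$.

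First, I would fix $R > 0$ so that $\mathrm{supp}\,\psi \subset [-R,R]$. For the two-sided exponential comparison, I would invoke the triangle inequality $|x|-R \leq |x-y| \leq |x|+R$ on $y \in \mathrm{supp}\,\psi$, so that $e^{-\eul R}e^{-\eul|x|} \leq e^{-\eul|x-y|} \leq e^{\eul R}e^{-\eul|x|}$. Integrating against the probability measure $\psi$ produces the two-sided bound with multiplicative constants $e^{\pm \eul R}$; the hypothesis $\eul < 1/\sqrt d$ keeps $\eul R$ bounded, so these constants may be chosen uniformly in $\eul$. The trivial bound $\phi \leq 1$ follows at once from $e^{-\eul|\cdot|} \leq 1$ and $\|\psi\|_{L^1}=1$.

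Next, I would differentiate under the convolution, placing the derivatives on the exponential factor. The identity $\phi'(x) = -\eul \int \sign(x-y) e^{-\eul|x-y|} \psi(y)\,dy$ immediately yields $|\phi'(x)| \leq \eul \phi(x)$ after taking absolute values inside the integral. For the second derivative, the distributional identity $\partial_x^2 e^{-\eul|x|} = \eul^2 e^{-\eul|x|} - 2\eul \delta_0$ gives the pointwise formula
\[
    \phi''(x) = \eul^2 \phi(x) - 2\eul \psi(x).
\]
The upper bound $\phi''(x) \leq \eul^2 \phi(x)$ is immediate from $\psi \geq 0$, and the matching lower bound is recovered by absorbing the $-2\eul \psi$ term into $\eul^2 \phi$, using that $\phi$ is bounded below by a positive constant on the (fixed) compact support of $\psi$, at the cost of enlarging the constant $C$.

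Finally, the comparison of $K_d$ with $\phi_s$ is a direct consequence of the first step: combining $\phi_s(x) \geq C e^{-\eul|x-s|}$ with the definition $K_d(x-s) = (2\sqrt d)^{-1} e^{-|x-s|/\sqrt d}$, the ratio is dominated by $(2C\sqrt d)^{-1} e^{-(1/\sqrt d - \eul)|x-s|}$, and since $\eul < 1/\sqrt d$ the exponent is nonpositive, bounding the ratio uniformly by $C'/\sqrt d$. The only step requiring any care is the second derivative estimate, where the delta singularity in $\partial_x^2 e^{-\eul|x|}$ introduces the stray $-2\eul \psi$ term that must be absorbed into $\eul^2 \phi$ via the lower bound on $\phi$; this is the main (and only mild) obstacle in an otherwise routine computation.
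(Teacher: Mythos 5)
The paper omits the proof, calling it elementary, so there is no official argument to compare against. Your treatment of the two-sided exponential comparison (triangle inequality over $\mathrm{supp}\,\psi$ together with $\|\psi\|_{L^1}=1$), the trivial bound $\phi\leq 1$, the first-derivative estimate, and the comparison of $K_d$ with $\phi_s$ (via $\eul<1/\sqrt d$ making the exponent nonpositive) is correct and is the natural route.

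The second-derivative estimate, however, is both the one place where your argument fails and, as far as I can tell, a misprint in the statement itself. Your distributional identity $\phi''(x)=\eul^2\phi(x)-2\eul\psi(x)$ is correct, and the one-sided bound $\phi''\leq\eul^2\phi$ follows from $\psi\geq 0$. But the absorption you propose for the other side — $2\eul\psi(x)\leq C\eul^2\phi(x)$ on $\mathrm{supp}\,\psi$, ``at the cost of enlarging the constant $C$'' — cannot hold with $C$ independent of $\eul$. On the fixed compact support of $\psi$ both $\psi$ and $\phi$ are comparable to $1$ uniformly for $\eul\in(0,1/\sqrt d)$, so the inequality would force $C\gtrsim 1/\eul\to\infty$. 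Concretely, at any interior point $x_0$ with $\psi(x_0)>0$, your own formula gives $|\phi''(x_0)|=2\eul\psi(x_0)-\eul^2\phi(x_0)\sim 2\eul\psi(x_0)$, which is of order $\eul$, not $\eul^2$, as $\eul\to 0$, whereas $\eul^2\phi(x_0)\sim\eul^2$. The sharp conclusion available from your computation is $|\phi''|\leq C\eul\,\phi$ (first power of $\eul$, and with a multiplicative constant, which the lemma's display also lacks). To genuinely obtain the $\eul^2$-scaling one would have to rescale the mollifier, e.g.\ replace $\psi$ by $\eul\,\psi(\eul\,\cdot)$, giving $\phi''=\eul^2\phi-2\eul^2\psi(\eul\,\cdot)$, after which the absorption you describe does go through; but with the paper's fixed $\psi$ it does not. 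You should flag this as an error in the statement rather than present the absorption as a valid patch.
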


Finally, we compile one last estimate that we require often in the sequel.  This is one major advantage of our particular form of the uniformly local $L^p$-spaces.  A na\"ive estimate that follows directly from the definition of $K_d$~\eqref{eq.vConv} and of $V_a$~\eqref{eq.vaConv} is
\[
	\|V_a\|_{L^\infty} \leq \|U_a\|_{L^\infty}.
\]
\Cref{l.UnifLocBoundOnVa} replaces this, which involves the same regularity of $V_a$ and $U_a$, with an estimate that connects a higher regularity norm of $V_a$ (the $L^\infty$-norm) with a lower regularity norm of $U_a$ (the $\lul^p$-norm).

\begin{lemma}\label{l.UnifLocBoundOnVa}
Suppose that $U_a\geq 0$ is related to $V_a$ by~\eqref{eq.vaConv}.  Then, for $p \in [1,\infty)$,  there is $C>0$ such that, for all $x$,  
\begin{equation}\label{eq.Va<|Ua|_lul}
    \va(x)
    \leq
    Cd^{-\frac{1}{2p}}\norm{U_a}_{\lul^p}.
\end{equation}
\end{lemma}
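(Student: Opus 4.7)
The plan is to bound the convolution $\va = \widetilde{U}_a * K_d$ pointwise by splitting the kernel $K_d$ between its $L^1$ mass and a pointwise weight. The key observation is that the target exponent $d^{-1/(2p)}$ matches the scaling of $\|K_d\|_{L^{p'}}$, where $p'$ is the H\"older conjugate of $p$; this suggests applying H\"older in a way that isolates $\|K_d\|_{L^1} = 1$ together with the pointwise bound $K_d \leq (C/\sqrt d)\, \phi$ from \Cref{l.varphiBounds}.

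Concretely, I would factor
\[
	K_d(x-y)\widetilde{U}_a(y) = K_d(x-y)^{1/p'} \cdot \bigl(K_d(x-y)^{1/p}\widetilde{U}_a(y)\bigr)
\]
and apply H\"older with exponents $p'$ and $p$. Since $\|K_d\|_{L^1} = 1$ by~\eqref{eq.vConv}, this yields
\[
	\va(x) \leq \left( \int K_d(x-y)\, \widetilde{U}_a(y)^p\, dy \right)^{1/p}.
\]
Next, using the evenness of $K_d$ together with~\eqref{eq.varphiKdBound} gives $K_d(x-y) \leq (C/\sqrt d)\, \phi_x(y)$; substituting this and recognizing the uniformly local norm (the supremum over $s$ in \Cref{d.UnifLocalLp} includes $s=x$) produces
\[
	\va(x) \leq \left(\frac{C}{\sqrt d}\right)^{1/p} \left(\int \phi_x(y)\, \widetilde{U}_a(y)^p\, dy\right)^{1/p} \leq C d^{-1/(2p)} \|U_a\|_{\lul^p},
\]
adopting the convention $\|U_a\|_{\lul^p} := \|\widetilde{U}_a\|_{\lul^p}$, consistent with the earlier treatment of $\|U_a\|_{L^\infty}$.

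The only subtlety worth flagging is the choice of H\"older split: the na\"ive approach of applying the pointwise bound~\eqref{eq.varphiKdBound} to $K_d$ \emph{before} H\"older produces a prefactor of $d^{-1/2}$, independent of $p$, which is too large. Splitting $K_d$ first allows it to contribute its $L^1$ mass (dimensionless in $d$) in full and its $L^\infty$-type scaling only to the power $1/p$, which yields the sharp exponent $d^{-1/(2p)}$. This sharpness is precisely what feeds into~\eqref{e.c040401} and, eventually, the decoupling of $c_a$ from the $\lul^2$-estimates on $U_a$ in the main argument.
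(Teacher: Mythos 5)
Your proposal is correct and is essentially the same argument as the paper's: both split the kernel as $K_d = K_d^{1/p}\cdot K_d^{1/p'}$, apply H\"older with exponents $p$ and $p'$, use $\|K_d\|_{L^1}=1$ to eliminate the $p'$-factor, and bound the remaining $K_d^{1/p}$ factor pointwise by $\phi^{1/p}$ via \Cref{l.varphiBounds}. The only difference is a trivial reordering (you apply H\"older before the pointwise kernel bound, the paper applies it after), which does not change the substance of the proof.
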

\begin{proof}
By \eqref{eq.varphiKdBound} and H\"{o}lder's inequality, we have 
\begin{align*}
    \va(x)
    &=
    \int_{-\infty}^{\infty} \widetilde{U}_a(y)K_d(x-y)^\frac{1}{p}K_d(x-y)^{\frac{p-1}{p}}\d y\\
    &\leq
    Cd^{-\frac{1}{2p}}
    \int_{-\infty}^{\infty} \widetilde{U}_a(y)\phi^\frac{1}{p}(x-y)K_d(x-y)^\frac{p-1}{p}\d y\\
    &\leq
    Cd^{-\frac{1}{2p}}\mleft(\int_{-\infty}^{\infty} \widetilde{U}_a(y)^{p}\phi(x-y)\d y\mright)^\frac{1}{p}\mleft(\int_{-\infty}^{\infty} K_d(x-y)\d y\mright)^\frac{p-1}{p}
    \leq 
    Cd^{-\frac{1}{2p}}\norm{U_a}_{\lul^{p}}.
    \qedhere
\end{align*}
\end{proof}

\subsection{Uniform bounds on solutions to the slab problem}

In this section, the key estimate is to obtain $a$-independent bounds on $c_a$ and $\|U_a\|_{L^\infty}$.  The main difficulty is that these two quantities are coupled.

\subsubsection{The main estimate}\label{s.three_lemmas}

We obtain the bound on $U_a$ via a main ``energy estimate'' paired with a standard bound on the size of $c_a$ in terms of the $L^\infty$-norm of the advection.  

We state these two results here, although defer their proofs until \Cref{s.proofs}.  This allows us to show exactly how they piece together to obtain the crucial bound on $c_a$ and $\|U_a\|_{L^\infty}$ (\Cref{p.c_a bound}).

We begin by stating the ``energy estimate.''  Note that we can choose $\eul$ to be small and this allows us to temper the dependence on $c_a$.  It is the crucial estimate on which our construction hinges.  

\begin{lemma}\label{l.energy}
Suppose that $(c_a,U_a,\va)$ solves \eqref{eq.SlabModel}. If the positive parameter $\eul$ is sufficiently small, depending only on $d$ and $\chi$, then
\begin{equation}\label{e.c032610}
	\|U_a\|_{\lul^2}^2
		\leq C\mleft(\eul c_a^2 + \frac{1}{\eul}\mright).
\end{equation}
\end{lemma}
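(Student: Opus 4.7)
The plan is to establish a localized per-$s$ bound of the form
\[
    \int_{x_1}^a \phi_s U_a^2 \leq C\mleft(\frac{1}{\eul} + \eul c_a^2 + \sqrt{\eul}\,\|U_a\|_{\lul^2}^2\mright)
\]
for a well-chosen $x_1 \in (-a,a)$, then take $\sup_s$ and absorb the $\sqrt{\eul}$-term into the left-hand side. I split into cases based on $\|U_a\|_{L^\infty}$: if $\|U_a\|_{L^\infty} \leq 2$, then $\|U_a\|_{\lul^2}^2 \leq 4\|\phi\|_{L^1}\leq C/\eul$ and we are done. In the main case $\|U_a\|_{L^\infty} > 2$, since $U_a(-a)=1<3/2<\|U_a\|_{L^\infty}$, I define $x_1\in(-a,a)$ as the leftmost point of $\{x:U_a(x)=3/2\}$. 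By the ``leftmost'' condition, $U_a<3/2$ on $[-a,x_1)$, forcing $U_a'(x_1)\ge 0$; and since $U_a(a)=0$ with $U_a\ge 0$ (by \Cref{p.U_a>=0}), $U_a'(a)\le 0$. These two sign conditions will make every boundary term arising from integration by parts on $[x_1,a]$ either nonpositive or explicitly small.

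The core computation is to multiply the first equation of~\eqref{eq.SlabModel} (using $(U_a)_+ = U_a$) by $\phi_s U_a$, integrate over $[x_1,a]$, and combine with the elementary inequality $U_a^2\le\tfrac{1}{2}U_a^3+C$ (valid for all $U_a\ge 0$) to reduce to
\[
    \int_{x_1}^a \phi_s U_a^2 \leq \int_{x_1}^a \phi_s U_a U_a'' + c_a \int_{x_1}^a \phi_s U_a U_a' - \tau\chi \int_{x_1}^a \phi_s U_a (U_a V_a')' + \frac{C}{\eul}.
\]
Each of the three remaining integrals is then estimated by integration by parts. The bounds $|\phi_s'|\le\eul\phi_s$ and $|\phi_s''|\le\eul^2\phi_s$ from \Cref{l.varphiBounds} supply the small factors of $\eul$, while $\sqrt d\,|V_a'|\le V_a$ (\Cref{l.VaInftyBounds}) and $V_a\le C\|U_a\|_{\lul^2}/d^{1/4}$ (\Cref{l.UnifLocBoundOnVa}) convert $V_a$- and $V_a'$-factors into $\|U_a\|_{\lul^2}$-factors. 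Crucially, the negative term $-\int_{x_1}^a\phi_s(U_a')^2$ produced by integrating by parts on the diffusion integral provides a budget which, via Cauchy--Schwarz, absorbs cross terms like $\int\phi_s' U_a U_a'$ and similar pieces from the chemotaxis integral. Boundary contributions at $x_1$ and $a$ are discarded when their signs are good or else controlled using $U_a(x_1)=3/2$ and $U_a(a)=0$.

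To finish, I take $\sup_s$. Since $U_a\le 3/2$ on $[-a,x_1]$ and $\widetilde U_a\le 1$ on $(-\infty,-a]$,
\[
    \int_{-\infty}^{x_1} \phi_s \widetilde U_a^2 \leq \tfrac{9}{4}\|\phi\|_{L^1} \leq \frac{C}{\eul},
\]
so the per-$s$ estimate upgrades to $\|U_a\|_{\lul^2}^2 \le C(1/\eul + \eul c_a^2 + \sqrt{\eul}\,\|U_a\|_{\lul^2}^2)$. Choosing $\eul$ small enough that $C\sqrt{\eul}\le 1/2$ absorbs the last term into the left-hand side and yields~\eqref{e.c032610}.

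I expect the main difficulty to be the careful bookkeeping of Young's inequalities needed to produce exactly the balance $\eul c_a^2 + \sqrt{\eul}\,\|U_a\|_{\lul^2}^2$ on the right, rather than unabsorbable terms such as $c_a^2 \|U_a\|_{\lul^2}^2$ or $\eul \|U_a\|_{\lul^2}^4$. The $\sqrt{\eul}$-factor in front of $\|U_a\|_{\lul^2}^2$ must survive every step, especially in the chemotaxis integral, where $V_a$- and $U_a$-factors could compound into higher powers of $\|U_a\|_{\lul^2}$ if not handled with care. Accomplishing this without any smallness assumption on $\chi/d$ (as was used in prior work) is precisely what the scaling freedom in the uniformly local norm---our ability to take $\eul$ as small as we need---buys us here.
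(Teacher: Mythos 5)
There is a genuine gap, and it concerns the choice of test function. You multiply the equation by $\phi_s U_a$, while the paper multiplies by $\phi_s$ alone (after first rewriting the equation via~\eqref{eq.va''} into~\eqref{23.04.24.1}; the two presentations are equivalent). This difference is not cosmetic. Your choice corresponds to the Tello--Winkler estimate~\eqref{10.13.1} with $p=2$, whose coercivity coefficient is $1-\tfrac{\chi}{d}\tfrac{p-1}{p}=1-\tfrac{\chi}{2d}$; the whole point of~\eqref{10.13.1} is to take $p$ close to $1$ (so the coefficient stays negative regardless of $\chi/d$), and the paper's test function $\phi_s$ is exactly the ``$p=1$'' endpoint. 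Concretely, in your chemotaxis integral
\[
-\tau\chi\int_{x_1}^a\phi_s\,U_a\,(U_aV_a')'
=-\tau\chi\int_{x_1}^a\phi_s\,U_a U_a' V_a'
-\frac{\tau\chi}{d}\int_{x_1}^a\phi_s\,U_a^2 V_a
+\frac{\tau\chi}{d}\int_{x_1}^a\phi_s\,U_a^3,
\]
the last term is positive and of cubic degree; after the symmetric integration by parts in the first term you only claw back $\tfrac{\tau\chi}{2d}\int\phi_s U_a^3$, leaving a net coefficient $1-\tfrac{\tau\chi}{2d}$ on $\int\phi_s U_a^3$, which is negative for $\chi>2d$. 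Alternatively, if you do not substitute $V_a''$ and instead estimate $\tau\chi\int\phi_s U_a U_a'V_a'$ via Cauchy--Schwarz against the gradient budget, you are left with $C\|V_a'\|_{L^\infty}^2\int\phi_s U_a^2\le C\|U_a\|_{\lul^2}^4$, and $\tau\chi\int\phi_s' U_a^2V_a'\le C\eul\|U_a\|_{\lul^2}^3$. None of these ($\|U_a\|_{\lul^2}^3$, $\|U_a\|_{\lul^2}^4$, or a sign-indefinite cubic) can be absorbed into $\|U_a\|_{\lul^2}^2$ by shrinking $\eul$: the offending quartic term carries no factor of $\eul$ at all. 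This is precisely the phenomenon you flag in your last paragraph as ``higher powers of $\|U_a\|_{\lul^2}$,'' but your proposal does not resolve it, and the scaling freedom in $\eul$ is not what resolves it.

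The fix is to test against $\phi_s$ alone. Then the reaction term already supplies $\int\phi_s U_a^2$ on the left (no need for the auxiliary inequality $U_a^2\le\tfrac12U_a^3+C$), and the chemotaxis contribution, after one integration by parts, is $\chi\int\phi_s' U_a V_a'$ plus a controlled boundary term: it is \emph{linear} in $U_a$ and carries a factor $|\phi_s'|\le\eul\phi_s$, which Young's inequality converts into $C\sqrt{\eul}\,\|U_a\|_{\lul^2}^2$, exactly the absorbable shape you need. Everything else in your outline --- the dichotomy on $\|U_a\|_{L^\infty}$, the definition of $x_1$ as the leftmost $\tfrac32$-level point to guarantee the signs $U_a'(x_1)\ge 0\ge U_a'(a)$ of the boundary terms, the tail estimate giving $C/\eul$, and taking $\sup_s$ and absorbing $C\sqrt{\eul}\|U_a\|_{\lul^2}^2$ --- matches the paper's argument and is sound; it is only the choice of test function that must change.
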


We now obtain our second ingredient, which is the bound on $c_a$ in terms of $\norm{V_a}_{L^\infty}$.  The main steps in its proof, which is found in \Cref{ss.c_a<=|U_a|}, is somewhat standard and dates at least back to \cite{BerestyckiLarrouturou}.

\begin{lemma}[Upper bound on the speed]\label{l.c_a<=|U_a|} If $(c_a,U_a,\va)$ solves \eqref{eq.SlabModel} and $a>\ln(1/\theta)$, then
\begin{equation}\label{eq.ca<=|U_a|}
    c_a
    \leq
    2+\mleft(\frac{\chi}{\sqrt{d}}+\frac{\chi}{d}\mright)\norm{V_a}_{L^\infty}.
\end{equation}
\end{lemma}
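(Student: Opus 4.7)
The proof is by contradiction, following the classical approach of Berestycki--Larrouturou: an exponential change of variables converts the ODE into a linear elliptic problem on $[-a,a]$ to which the weak maximum principle applies, and the normalization condition in \eqref{eq.SlabModel} then forces a contradiction with $a > \ln(1/\theta)$.

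Suppose for contradiction that $c_a > 2 + \left(\tfrac{\chi}{\sqrt d} + \tfrac{\chi}{d}\right) \|V_a\|_{L^\infty}$. Using \eqref{eq.va''slab} to write $\chi V_a'' = \chi(V_a - U_a)/d$, rewrite the first equation of \eqref{eq.SlabModel} on $[-a,a]$ as
\[
U_a'' + (c_a - \chi V_a') U_a' + U_a(1 - U_a - \chi V_a'') = 0.
\]
Introduce $W(x) := e^{x} U_a(x)$. A direct computation shows
\[
W'' + (c_a - \chi V_a' - 2) W' + \mathfrak c(x) W = 0, \qquad \mathfrak c(x) := 2 - c_a + \chi V_a' - \chi V_a'' - U_a,
\]
where the $-U_a$ term arises from absorbing the quadratic nonlinearity $U_a^2$ into the zeroth-order coefficient.

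The crux is to show $\mathfrak c(x) < 0$ on $[-a,a]$ under the contradiction hypothesis. The plan is to exploit \Cref{l.VaInftyBounds}, which gives the pointwise bound $\chi V_a'(x) \leq \chi V_a(x)/\sqrt d$, together with the identity $-\chi V_a'' = \chi(U_a - V_a)/d$ coming from \eqref{eq.va''slab}, to rewrite
\[
\chi V_a' - \chi V_a'' - U_a \leq \chi V_a\left(\tfrac{1}{\sqrt d} - \tfrac{1}{d}\right) + U_a\left(\tfrac{\chi}{d} - 1\right).
\]
Bounding the first term by $\chi \|V_a\|_{L^\infty}/\sqrt d$ and, after noting that $\|V_a\|_{L^\infty} \geq V_a(-a) \geq 1/2 > 2\theta$ (from the convolution representation of $V_a$ and the fact $\widetilde U_a \equiv 1$ on $(-\infty,-a]$), absorbing $U_a(\chi/d - 1)$ into the $\chi \|V_a\|_{L^\infty}/d$ slack afforded by the contradiction hypothesis, yields $\mathfrak c(x) < 0$.

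Once $\mathfrak c < 0$ is established, apply the weak maximum principle to $W$: since $W \geq 0$ by \Cref{p.U_a>=0} and the zeroth-order coefficient is strictly negative, $W$ attains its maximum on $[-a,a]$ at the boundary. Since $W(-a) = e^{-a}$ and $W(a) = 0$, this gives $W(x) \leq e^{-a}$ for all $x \in [-a,a]$. Finally, the normalization $\max_{x \geq 0} \widetilde U_a = \theta$ furnishes some $x^* \in [0,a]$ with $U_a(x^*) = \theta$, so
\[
\theta = U_a(x^*) = e^{-x^*} W(x^*) \leq e^{-x^*} e^{-a} \leq e^{-a},
\]
which rearranges to $a \leq \ln(1/\theta)$, contradicting the hypothesis.

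\paragraph{Main obstacle.} The delicate step is establishing $\mathfrak c(x) < 0$ uniformly on the \emph{entire} slab $[-a,a]$. The term $U_a(\chi/d - 1)$ has the unfavorable sign when $\chi > d$, and on $[-a,0]$ one does not have the useful smallness $U_a \leq \theta$ that holds on $[0,a]$. Making this estimate work in the general regime requires carefully exploiting the interplay between the cancellation $-U_a + \chi U_a/d$ arising from the nonlinearity and $V_a''$-contribution, together with the fact that $\|V_a\|_{L^\infty}$ is bounded below by a positive universal constant; this is what produces the particular form $2 + (\chi/\sqrt d + \chi/d)\|V_a\|_{L^\infty}$ of the bound.
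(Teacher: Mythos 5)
Your approach is, at heart, the same as the paper's: both exploit the exponential weight $e^{-x}$. The paper slides the barrier $\alpha_A(x)=Ae^{-x}$ down until it first touches $U_a$ and exploits the touching-point relations together with the normalization $\max_{x\geq0}\widetilde U_a=\theta$ to rule out $x_0=-a$; you set $W=e^xU_a$ and apply the weak maximum principle, then use the normalization to contradict $a>\ln(1/\theta)$. These are the same argument in different clothes (the touching point is precisely where $W=e^xU_a$ attains its maximum), and your contradiction at the end is valid \emph{provided} the sign condition $\mathfrak c\leq 0$ holds on all of $(-a,a)$.

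That sign condition is exactly where the proposal breaks down, and you acknowledge as much without resolving it. After substituting $-\chi V_a''=\chi(U_a-V_a)/d$ you are left with the term $U_a(\chi/d-1)$ in $\mathfrak c$. When $\chi>d$ this term is nonnegative, and there is \emph{no} a priori bound on $\|U_a\|_{L^\infty}$ available at this stage of the paper — obtaining such a bound is precisely the point of the whole construction, and indeed \Cref{l.c_a<=|U_a|} is an input to \Cref{p.c_a bound}, which is where the $L^\infty$-control is first established. Thus $U_a(\chi/d-1)$ can be arbitrarily large, while the ``slack'' $\chi\|V_a\|_{L^\infty}/d$ afforded by the contradiction hypothesis is a fixed quantity; the lower bound $\|V_a\|_{L^\infty}\geq V_a(-a)\geq\sfrac12$, while correct, does not make the slack scale with $U_a$. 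So the claimed absorption fails, and with it the assertion $\mathfrak c<0$ on $[-a,a]$. Note also that even restricting to $[0,a]$ (where $U_a\leq\theta$) does not help, because the maximum principle needs $\mathfrak c\leq 0$ on the \emph{entire} slab, including $(-a,0)$.

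The paper's proof does not unpack $V_a''$ in terms of $U_a$. Instead it works at the single extremal point $x_0$, uses $U_a(1-U_a)\leq U_a$ there, and bounds the remaining advection contributions by $\|V_a'\|_{L^\infty}\leq\|V_a\|_{L^\infty}/\sqrt d$ and $\|V_a''\|_{L^\infty}\leq\|V_a\|_{L^\infty}/d$ (the latter attributed to \Cref{l.VaInftyBounds}), keeping everything in terms of $\|V_a\|_{L^\infty}$ rather than pointwise values of $U_a$. If you wish to salvage your presentation, you should estimate $-\chi U_a V_a''$ at the extremal point the same way, without expanding $V_a''$ into $(V_a-U_a)/d$ — expanding it is what creates the uncontrollable $U_a^2(\chi/d-1)$ term. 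As written, the proposal is incomplete.
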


\subsubsection{Uniform estimates on \texorpdfstring{$c_a$}{cₐ} and \texorpdfstring{$\|U_a\|_{L^\infty}$}{‖Uₐ‖ ͚}}

We now put together our estimates from \Cref{s.three_lemmas} in order to establish uniform bounds on $c_a$ and $\|U_a\|_{L^\infty}$. 

\begin{proposition}\label{p.c_a bound}
If $(c_a,U_a,\va)$ solves \eqref{eq.SlabModel}, then $c_a$ and $U_a$ are bounded uniformly in terms of $d$ and $\chi$.
\end{proposition}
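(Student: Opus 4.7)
The plan is to close a circular system of estimates among $c_a$, $\|V_a\|_{L^\infty}$, and $\|U_a\|_{\lul^2}$, exploiting the freedom to shrink $\eul$ in the energy estimate to decouple them. Once $\|U_a\|_{\lul^2}$, $c_a$, and the $L^\infty$-norms of $V_a$ and $V_a'$ are all uniformly controlled, the $L^\infty$-bound on $U_a$ will follow from viewing \eqref{eq.SlabModel} as a second-order ODE with bounded coefficients.

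First, I would chain \Cref{l.c_a<=|U_a|} with \Cref{l.UnifLocBoundOnVa} (taking $p=2$) to get
\[
    c_a\leq 2+C\|V_a\|_{L^\infty}\leq 2+C\|U_a\|_{\lul^2},
\]
so $c_a^2\leq C(1+\|U_a\|_{\lul^2}^2)$. Feeding this into \Cref{l.energy} yields
\[
    \|U_a\|_{\lul^2}^2
    \leq C\mleft(\eul c_a^2+\frac{1}{\eul}\mright)
    \leq C\eul\|U_a\|_{\lul^2}^2+C\eul+\frac{C}{\eul}.
\]
Choosing $\eul$ small enough (depending only on $d$ and $\chi$) that $C\eul\leq \sfrac12$, I would absorb the quadratic term on the right into the left and conclude that $\|U_a\|_{\lul^2}$ is bounded uniformly in $a$. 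Back-substituting then bounds $c_a$ and $\|V_a\|_{L^\infty}$, and \Cref{l.VaInftyBounds} bounds $\|V_a'\|_{L^\infty}$.

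Finally, I would bound $\|U_a\|_{L^\infty}$ by analyzing the maximum $M$ of $U_a$ on $[-a,a]$, attained at some $x_M$. If $x_M\in\{-a,a\}$, then $M\leq\max\{U_a(-a),U_a(a)\}=1$. Otherwise $x_M\in(-a,a)$, and I would combine the first equation of \eqref{eq.SlabModel} with \eqref{eq.va''slab} to eliminate $V_a''$, obtaining a pointwise identity in $U_a^2$, $U_a$, $U_a'$, $U_a''$, $V_a$, and $V_a'$. Evaluating at $x_M$ with $U_a'(x_M)=0$ and $U_a''(x_M)\leq 0$, and using the uniform bound on $V_a(x_M)$, I would solve for an upper bound on $M$ in terms of $d$, $\chi$, and $\|V_a\|_{L^\infty}$. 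The main obstacle I anticipate is that when $\tau\chi/d\geq 1$ the sign of the resulting quadratic term renders the direct max-principle argument of the type in \eqref{e.c120201} inconclusive; in that regime I would fall back on the ``standard argument'' alluded to in \Cref{s.discussion}, namely a local $L^2$-to-$L^\infty$ bootstrap. Concretely, multiplying the equation by a cutoff times $U_a$ and integrating by parts controls $U_a$ in $W^{1,2}_{\mathrm{loc}}$ using the bounded drift and reaction coefficients together with the $\lul^2$-bound on $U_a$; the one-dimensional Sobolev embedding $W^{1,2}_{\mathrm{loc}}\hookrightarrow L^\infty_{\mathrm{loc}}$ then produces the desired uniform $L^\infty$-bound on $U_a$.
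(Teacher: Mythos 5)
Your argument is correct and follows essentially the same route as the paper: combine the speed estimate of \Cref{l.c_a<=|U_a|} with \Cref{l.UnifLocBoundOnVa} to get $c_a\leq C(1+\|U_a\|_{\lul^2})$, feed this into \Cref{l.energy}, shrink $\eul$ so the $C\eul\|U_a\|_{\lul^2}^2$ term absorbs into the left, and then obtain the $L^\infty$ bound via a local Caccioppoli estimate (multiply by $\psi^2 U_a$, integrate by parts) plus the one-dimensional Sobolev embedding. Your brief detour through the interior-maximum inequality---which, as you correctly observe, fails to close once $\tau\chi/d\geq 1$, the very regime the paper is designed to handle---is not present in the paper, which applies the Caccioppoli--Sobolev bootstrap directly in all cases.
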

\begin{proof}
We first obtain a bound on the speed $c_a$.  By combining \Cref{l.UnifLocBoundOnVa} and \Cref{l.c_a<=|U_a|}, we have
\[
	c_a
		\leq C(1 + \|U_a\|_{\lul^2}).
\]
Then, squaring the above and applying the bound in \Cref{l.energy}, we find
\[
	c_a^2
		\leq C + C \eul c_a^2 + \frac{C}{\eul}.
\]
The bound for $c_a$ follows after taking $\eul$ sufficiently small and absorbing the $c_a^2$ term on the right into the left.

Using this uniform bound on $c_a$ in \Cref{l.energy}, we deduce that
\begin{equation}\label{e.c040101}
	\|U_a\|_{\lul^2}
		\leq C,
\end{equation}
which, by \Cref{l.VaInftyBounds} and \Cref{l.UnifLocBoundOnVa}, yields
\[
	\|V_a\|_{W^{1,\infty}}
		\leq C.
\]

The rest of the proof essentially follows by classical elliptic regularity theory since $U_a$ satisfies an elliptic equation with bounded coefficients and enjoys a (local) $L^2$-bound.  We are not, however, able to find a reference that we can quote ``out-of-the-box,'' so we prove it directly.

Let us show how to conclude a bound in $(-a+1,a-1)$.  The modifications to handle the boundary behavior are immediate so we omit that case.  Fix any $x_0 \in (-a+1, a-1)$ and let $\psi$ be any nonnegative cut-off function that is $1$ on $(x_0 - 1/2,x+1/2)$ and $0$ outside of $(x_0 - 1, x_0 + 1)$.  Then, multiplying~\eqref{eq.SlabModel} by $\psi^2 U_a$ and integrating by parts, we find
\begin{equation}\label{e.c033102}
	\begin{split}
		\int_{-\infty}^\infty \psi^2 |U_a'|^2
			&= \int_{-\infty}^\infty \mleft(-2 \psi \psi' U_a U_a'
				+ c_a \psi^2 U_aU_a'
				- \chi(V_a' U_a)' \psi^2 U_a
				+ \psi^2 U_a^2 (1-U_a) \mright)
			\\&
			\leq \frac{1}{2} \int_{-\infty}^\infty \psi^2 |U_a'|^2
				+ C \int_{-\infty}^\infty (|\psi'|^2 + \psi^2)\mleft(|U_a|^2\mright)
			\\&
			\leq \frac{1}{2} \int_{-\infty}^\infty \psi^2 |U_a'|^2
				+ C \|U_a\|_{\lul^2}^2
			\leq \frac{1}{2} \int_{-\infty}^\infty \psi^2 |U_a'|^2
				+ C.
	\end{split}
\end{equation}
Above we used the bounds on $c_a$ and $V_a$, Young's inequality, and the fact that
\[
	\|\psi U_a\|_{L^2}
		\leq C\|U_a\|_{\lul^2}
		\leq C.
\]
The first inequality above is follows from the compact support of $\psi$, and the second follows from~\eqref{e.c040101}.

After absorbing the gradient term on the right hand side of~\eqref{e.c033102} into the left, we obtain
\[
	\|U'_a\|_{L^2([x_0-\sfrac12,x_0+\sfrac12])}
		\leq C.
\]
The proof is finished after an application of the Sobolev embedding theorem.
\end{proof}

\subsection{Existence of a solution on the slab}

Having established bounds on $c_a$ and $U_a$ that are independent of $a$, we are ready to show the existence of solutions to the slab problem using Leray-Schauder degree theory. 

\begin{proposition}\label{p.slab_existence}
There exists $\theta_0>0$ such that, for all $\theta\in(0,\theta_0)$ and for all $a>0$ sufficiently large, there exists a solution $(c_a,U_a,\va)$ of \eqref{eq.SlabModel} with $\tau=1$.
\end{proposition}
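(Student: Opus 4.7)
My plan is to apply Leray--Schauder topological degree theory, using the parameter $\tau \in [0,1]$ already built into \eqref{eq.SlabModel}: at $\tau = 0$ the chemotaxis drops out and we recover the classical FKPP slab problem, while $\tau = 1$ is the problem we want to solve. The strategy is to construct a compact operator $\mathcal{T}_\tau$ on a Banach space $Y$ whose fixed points correspond to solutions of the slab problem, use the a priori estimates already established in this section to show the degree is well-defined on a suitable open set and is homotopy-invariant in $\tau$, and finally compute the degree at $\tau = 0$ to deduce existence at $\tau = 1$.

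Concretely, I would take $Y = C^{1,\alpha}([-a,a]) \times \R$ and define $\mathcal{T}_\tau(W, c) = (\hat U, \hat c)$ by letting $\widetilde W$ be the extension from \eqref{eq.vaConv}, $V_W = K_d * \widetilde W$, and $\hat U$ the unique solution of the linear Dirichlet problem
\begin{equation*}
    \hat U'' + c \hat U' = \tau \chi (W V_W')' - (W)_+(1 - W) \text{ in } (-a, a), \qquad \hat U(-a) = 1, \; \hat U(a) = 0,
\end{equation*}
and then setting $\hat c = c + \max_{x \in [0,a]} \hat U - \theta$. Standard elliptic regularity and the smoothing property of convolution with $K_d$ make $\mathcal{T}_\tau$ compact and jointly continuous in $(W, c, \tau)$; fixed points of $\mathcal{T}_1$ are exactly the solutions of \eqref{eq.SlabModel} with $\tau = 1$. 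The next step is to combine the nonnegativity of $U$ from \Cref{p.U_a>=0}, the uniform bounds on $c$ and $\|U\|_{L^\infty}$ from \Cref{p.c_a bound}, the lower bound $c \geq 1$ from \Cref{p.lowerboundspeed} (valid once $\theta$ is small and $a$ is large), and elliptic regularity to produce an open bounded set $\mathcal{O} \subset Y$, e.g., $\{\|W\|_{C^{1,\alpha}} < R\} \times (1, R)$ for $R$ sufficiently large, that contains every fixed point of $\mathcal{T}_\tau$ for every $\tau \in [0,1]$ and has no fixed points on its boundary. A key observation is that each of these estimates depends on the chemotaxis strength only through the product $\tau\chi \in [0, \chi]$, so they are automatically uniform in $\tau$.

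By homotopy invariance, $\deg(I - \mathcal{T}_1, \mathcal{O}, 0) = \deg(I - \mathcal{T}_0, \mathcal{O}, 0)$, and at $\tau = 0$ the fixed-point problem reduces to $U'' + c U' + U(1-U) = 0$ on $(-a,a)$ with the prescribed Dirichlet data and the normalization $\max_{x \geq 0} U = \theta$, i.e., the classical FKPP slab problem. This is well known to admit a unique solution with $c$ close to $2$, and the degree can be shown to be nonzero either by a direct linearization argument or by a further homotopy to an exactly solvable model. Therefore $\deg(I - \mathcal{T}_1, \mathcal{O}, 0) \neq 0$, yielding the desired fixed point. The main obstacle I anticipate is precisely this degree computation at $\tau = 0$: although the reduced problem is classical, showing the degree is nonzero requires either invoking a specific prior result or running an auxiliary homotopy. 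The remaining pieces --- setting up $\mathcal{T}_\tau$, verifying compactness and continuity, and checking that the bounds of \Cref{p.c_a bound} and \Cref{p.lowerboundspeed} really are uniform in $\tau \in [0,1]$ --- are essentially routine, though the $\tau$-uniformity in \Cref{l.energy} must be traced carefully since the chemotaxis term enters the energy estimate in a slightly delicate way.
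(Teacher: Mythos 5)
Your proposal follows essentially the same route as the paper: build a compact operator $S_\tau$ (your $\mathcal{T}_\tau$) on $\R\times C^{1,\alpha}$, use \Cref{p.U_a>=0}, \Cref{p.c_a bound}, \Cref{p.lowerboundspeed}, and elliptic regularity to confine all fixed points to the interior of a ball $\calB$, and invoke homotopy invariance to reduce to the degree of $S_0$. You correctly flag that the actual content is the degree computation at $\tau=0$, and you correctly name the two available tactics (linearization and a further homotopy). The paper in fact needs both, and I would push you to carry this out because it is less routine than the rest.

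Concretely, the paper introduces a \emph{second} homotopy $\calF_\tau$ that replaces the reaction term $U(1-U)$ in the $S_0$ problem by $\tau\,U(1-U)$, so that $\calF_1=S_0$ and $\calF_0$ is a linear advection-diffusion problem with the explicit solution $u_c(x)=(e^{-cx}-e^{-ca})/(e^{ca}-e^{-ca})$ and normalization $u_c(0)=\theta$. Two points then require attention that are not automatic from what you wrote. First, homotopy invariance along $\calF_\tau$ requires checking that no fixed point $(c_\tau,U_\tau)$ touches $\partial\calB$; here the $L^\infty$- and $C^{1,\alpha}$-bounds are easy, but excluding $c_\tau=0$ (the lower face of the rectangle) needs a comparison/Hopf-lemma argument: one slides the exponential subsolution $AU_0$ up until it touches $U_\tau$, and the normalization $\max_{x\ge 0}U_\tau=\theta<\sfrac12$ produces a contradiction if $c_\tau a\to0$. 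This step is the analogue, for the reaction-free homotopy, of \Cref{p.lowerboundspeed}, and it does not follow from the $\tau$-uniformity of the estimates you cite (those are for $\tau$ multiplying the chemotaxis term, not the reaction term). Second, the degree of $\id-\calF_0$ at its unique fixed point is pinned to $\pm 1$ by computing the Fr\'echet derivative explicitly — $\calF_0(c,U)$ depends on $c$ only, via $u_c$, so $D\calF_0$ is essentially a $1\times 1$ operator in the $c$-direction — and verifying $1$ is not an eigenvalue of it, which amounts to a sign computation on $\partial_c u_c(0)$. So your plan is sound and the ``uniform in $\tau$'' observations are correct, but the degree computation at the FKPP endpoint is a genuine further step, and the way the paper resolves it is the homotopy-plus-linearization combination you sketch, with the $c_\tau>0$ boundary-avoidance being the hidden subtlety.
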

\begin{proof}
We first note that any solution to~\eqref{eq.SlabModel} $(c_a,U_a, V_a)$ must satisfy ``nice'' bounds.  Indeed, from \Cref{p.U_a>=0}, we know that $U_a$ is positive on $(-a,a)$.  Additionally, from \Cref{p.c_a bound}, $c_a$ and $U_a$ are, respectively, bounded in $\R$ and $L^\infty$ independently of $a$.  Applying elliptic regularity theory and noting the bounds on $V_a$ in \Cref{l.VaInftyBounds}, we find that $U_a$ is bounded in $C^{2,\alpha}$.  It then follows from \Cref{p.lowerboundspeed} that $c_a \geq 1$ for all $a$ sufficiently large (up to decreasing $\theta$).  Although not necessary here, we note that all bounds are independent of $a$ as long as $a$ is sufficiently large.  We summarize this as:
\begin{equation}\label{02.22.1}
    \frac{1}{C_0}
    \leq
    c_a+\norm{U_a}_{C^{2,\alpha}([-a,a])}
    \leq
    C_0,
\end{equation}
where $C_0$ is independent of $a$.

We now seek to apply a Leray-Schauder degree theory argument (see \cite{Zeidler} for the general theory).  For $R > C_0$ to be chosen, let
\[
    \calB
    \coloneqq
    \mleft\{(c_a,U_a)\in[0,R]\times C^{1,\alpha}([-a,a]):\norm{U_a}_{C^{1,\alpha}([-a,a])}\leq R,U_a\geq0\mright\},
\]
and define the operator $S_\tau\colon\calB\to\R\times C^{1,\alpha}([-a,a])$ by
\[
    S_\tau(c_a,U_a)
    =
    \mleft(c_a+\theta-\max_{x\geq0}\overline{U}_a(x),\overline{U}_a\mright),
\]
where, for fixed $U_a$, we define $\overline{U}_a$ to be the unique solution of the linear problem
\[
    \begin{cases}
    -c_a\overline{U}_a'+\tau\chi\mleft(\overline{U}_a\va'\mright)'=\overline{U}_a''+U_a(1-U_a) & \text{in }(-a,a),\\
    \overline{U}_a(-a)=1, \quad \overline{U}_a(a)=0
    \end{cases}
\]
with $\va=K_d*\widetilde{U}_a$. 
Note that the existence of a solution to \eqref{eq.SlabModel} is equivalent to $S_\tau$.

Standard results in elliptic regularity theory provide bounds on the $C^{2,\alpha}([-a,a])$ norm of $\overline{U}_a$ which depend solely on $a$ and the $C^{1,\alpha}([-a,a])$ norm of $U_a$. Thus, $S_\tau$ is a compact operator. Moreover, if $a$ is sufficiently large, any fixed point of $S_\tau$ is an element of the interior of $\calB$ by \eqref{02.22.1}. Hence, we have
\begin{equation}\label{e.c033004}
    \deg(\id-S_1,\calB,0)
    =
    \deg(\id-S_0,\calB,0).
\end{equation}
Thus, it suffices to show that $\deg(\id -S_0, \calB,0)\neq 0$.  While this is likely known in the literature as fixed points of $S_0$ are solutions of the Fisher-KPP equation, we are unable to find a clear reference and so outline the details below.

In order to show, this we construct a second homotopy $\calF_\tau: \calB \to \R \times C^{1,\alpha}([-a,a])$.  Let
\[
	\calF_\tau(c, U)
		= \mleft(c + \theta - \max_{x\geq 0} \overline{U}(x), \overline{U}\mright)
\]
where $\overline{U}_a$ is the unique solution of
\begin{equation}\label{e.c033001}
	- c \overline{U}' - \overline{U}'' = \tau U(1-U).
\end{equation}
with the boundary conditions $U(-a) = 1$ and $U(a) = 0$.  We notice that $S_0 = \calF_1$ and $\calF_\tau$ is a homotopy connecting $\calF_0$ and $\calF_1$;  Thus,
\begin{equation}\label{e.c033003}
	 \deg(\id-S_0,\calB,0)
	 	=  \deg(\id-\calF_1,\calB,0)
		= \deg(\id - \calF_0, \calB,0)
\end{equation}
as long as any fixed point $\calF_\tau(c_\tau,U_\tau) = (c_\tau,U_\tau)$ avoids the boundary of $\calB$.  When $\tau = 0$, there is nothing to show because the solution is explicit (see~\eqref{e.c033002}). Otherwise, letting $(c,U)$ be the fixed point, we immediately see, as before, that
\[
	\max_{x\geq 0} U_\tau(x) = \theta
		\quad \text{ and }\quad
	\|U_\tau\|_{L^\infty} = 1.
\]
After proving a bound on $c_\tau$, which we do below, this immediately yields a $C^{1,\alpha}$-bound on $U_\tau$.  
The first follows from the definition of $\calF_\tau$, while the second can be deduce from an easy comparison principle argument.  Further, arguing exactly as in \Cref{l.c_a<=|U_a|}, we see that
\[
	c_\tau \leq 2 < R.
\]
To see that $c>0$, let
\begin{equation}\label{e.c033002}
	U_0(x) = \frac{e^{-c_\tau x} - e^{-c_\tau a}}{e^{c_\tau a} - c^{-c_\tau a}}.
\end{equation}
Then $AU_0$ is the unique solution to~\eqref{e.c033001} with $\tau = 0$ and the boundary conditions $AU_0(-a) = A$ and $AU_0(a) = 0$.  Arguing as in \Cref{p.lowerboundspeed} we can ``raise'' $A$ up until $AU_0$ ``touches'' $U_\tau$ from below.  It is clear that $AU_0$ is a strict subsolution of~\eqref{e.c033001} so $AU_0$ cannot ``touch'' $U_\tau$ from below in the interior or at $x=+a$ (by the Hopf lemma).  By analyzing the boundary at $x=-a$, we see that $A =1$, whence we conclude that
\[
	\frac{1 - e^{-c_\tau a}}{e^{c_\tau a} - e^{-c_\tau a}}
		= U_0(0)
		\leq U_\tau(0)
		\leq \theta.
\]
As $c_\tau a \to 0$, we see that the left hand side tends to $\sfrac12$, which yields a contradiction (recall that $\theta < \sfrac12$, see below~\eqref{eq.SlabModel}).  It follows that $c_\tau > 0$.

To finish, we need now show that the rightmost term in~\eqref{e.c033003} is non-zero.  Since a fixed point $(c_0,U_0)$ of $\calF_0$ corresponds to the unique solution of
\[
	- c_0 U_0' - U_0'' = 0
	\qquad \text{ where } U_0(-a) = 1, \quad U_0(0) = \theta, ~~ \text{ and } ~~U_0(a) = 0,
\]
we have that
\begin{equation}\label{e.c033005}
	\deg(\id - \calF_0,\calB,0)
		= \pm 1
\end{equation}
as long as we show that the Fr\'echet derivative of $\calF_0$ at the fixed point $(c_0,U_0)$ does not have 1 as an eigenvalue (see~\cite[Proposition~14.5]{Zeidler}).

We establish~\eqref{e.c033005} now.  For each $c$, let $u_c$ be the unique solution of
\[
	-cu_c' - u_c'' = 0
		\qquad \text{ where }
		u_c(-a) = 1 ~~ \text{ and } ~~u_c(a) = 0
\]
Then,
\[
	\calF_0(c, U) 
	    = \mleft(c + \theta - \max_{x\geq 0} u_c(x), u_c\mright)
		= \mleft(c + \theta - u_c(0), u_c\mright).
\]
Notice that the right hand side has no dependence on $U$.  Observe that
\[
	u_{c_0} = U_0
	\qquad\text{ and }\qquad
	\partial_c u_c(0)|_{c=c_0} = a \frac{2 -  (e^{c_0a} + e^{-c_0a})}{(e^{c_0 a} - e^{-c_0 a})^2}.
\]
It follows that, for any $h \in \R$ and $w \in C^{1,\alpha}([-a,a])$,
\begin{equation}\label{e.c033006}
    \mleft(D\calF_0(c_0,U_0)\mright)(h,w)
		= \mleft(\mleft(1+ a \frac{e^{c_0a} + e^{-c_0a} - 2}{(e^{c_0 a} - e^{-c_0 a})^2}\mright), \partial_c u_c|_{c=c_0}\mright)h,
\end{equation}
which clearly does not have $1$ as an eigenvalue.  
Thus,~\eqref{e.c033005} is justified and the proof is complete.
%
\end{proof}

\section{Obtaining a traveling wave solution: taking \texorpdfstring{$a\to\infty$}{a→∞}}\label{sec.a->infty}

We now use the solution constructed on the slab to obtain a traveling wave solution on the whole space by applying our uniform estimates and a compactness argument.  As usual, a difficulty here is to ensure that, when passing to the limit $a\to\infty$, we do not end up with a trivial traveling wave; i.e., one that is everywhere 1 or everywhere 0.

\begin{proposition}\label{p.existTWS}
There exists a traveling wave solution $(c,U,V)$ in the sense of \Cref{def.TWS}.
\end{proposition}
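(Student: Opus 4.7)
The strategy is to pass to the limit $a\to\infty$ in the slab solutions $(c_a,U_a,V_a)$ furnished by \Cref{p.slab_existence}, using the $a$-independent estimates already established, extract a locally $C^2$ limit via Arzel\`a-Ascoli, and then verify the conditions of \Cref{def.TWS}.

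First I would gather the uniform bounds. \Cref{p.c_a bound} gives $c_a + \norm{U_a}_{L^\infty}\leq C$ independently of $a$; combined with \Cref{l.VaInftyBounds} and~\eqref{eq.va''} this yields a uniform $W^{2,\infty}$-bound on $V_a$; and \Cref{p.lowerboundspeed} provides $c_a \geq 2-\eps$ for any $\eps>0$ once $a$ is large and $\theta$ is small. Interior Schauder estimates applied to the linear elliptic equation for $U_a$, viewed as having bounded coefficients $c_a,V_a',V_a''$, then give uniform $C^{2,\alpha}_{\mathrm{loc}}$ bounds on compact subsets of $(-a,a)$. To obtain a nontrivial limit, I would shift before taking the limit: let $\xi_a\in(-a,0)$ be the leftmost point with $U_a(\xi_a) = \tfrac{1}{2}$, which exists by continuity since $U_a(-a) = 1$ and $U_a \leq \theta < \tfrac{1}{2}$ on $[0,a]$; by the leftmost choice, $U_a(y) \geq \tfrac{1}{2}$ for all $y \in [-a, \xi_a]$. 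Setting $\widehat U_a(x) := U_a(x + \xi_a)$ and $\widehat V_a(x) := V_a(x + \xi_a)$, along a subsequence $a_n \to \infty$ we obtain $c_{a_n}\to c \geq 2$, $\widehat U_{a_n}\to U$ in $C^2_{\mathrm{loc}}(\R)$, and $\widehat V_{a_n}\to V := K_d * U$ in $C^1_{\mathrm{loc}}(\R)$ (the latter via dominated convergence using $\|K_d\|_{L^1}=1$); passing to the limit in~\eqref{eq.SlabModel} shows that $(c,U,V)$ solves~\eqref{eq.TWSmodel} with $U(0) = \tfrac{1}{2}$. Combined with \Cref{p.U_a>=0} and the strong maximum principle, $0<U\in C^2(\R)\cap L^\infty(\R)$.

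The leftmost choice of $\xi_a$ immediately yields $\liminf_{x\to-\infty}U(x)\geq\tfrac{1}{2}$, since $\widehat U_{a_n}(x)\geq\tfrac{1}{2}$ on $[-a_n-\xi_{a_n},0]$ passes to $U(x)\geq\tfrac{1}{2}$ for all $x\leq 0$ in the limit. For $\lim_{x\to\infty}U(x)=0$, the bound $U_a\leq\theta$ on $[0,a]$ passes to $U(x)\leq\theta$ on some right half-line $[L,\infty)$. Let $M:=\limsup_{x\to\infty}U(x)\in[0,\theta]$. Extracting a translation-limit of $U$ along a sequence $y_n\to\infty$ realizing $M$ produces a bounded nonnegative solution $U_\infty$ of~\eqref{eq.TWSmodel} on $\R$ with $U_\infty(0)=M=\sup_\R U_\infty$, hence $V_\infty=K_d*U_\infty\leq M$, $|V_\infty'|\leq M/\sqrt d$, and $|V_\infty''|\leq 2M/d$. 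Writing the TW equation as
\begin{equation*}
    L_c U_\infty := U_\infty''+cU_\infty'+U_\infty = U_\infty^2+\chi(U_\infty V_\infty')',
\end{equation*}
the linear operator $L_c$ has only the trivial bounded entire solution when $c\geq 2$ (its characteristic roots are both non-positive real), and an explicit Green's function computation shows that $L_c\colon L^\infty(\R)\to L^\infty(\R)$ is invertible with operator norm bounded uniformly in $c\geq 2$. Thus $M=\norm{U_\infty}_{L^\infty}\leq C\norm{U_\infty^2+\chi(U_\infty V_\infty')'}_{L^\infty}\leq CM^2$ with $C$ depending only on $d,\chi$, which forces $M=0$ once $\theta_0$ is chosen small enough (so that $\theta<1/C$).

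The main obstacle is the right-hand asymptotic. The standard KPP trick of multiplying by $e^{cx}$ and integrating is obstructed by the exponentially amplified chemotactic term $\chi c\int UV'e^{cx}\,\d x$ of indeterminate sign, hence my reliance on the linearization argument above. This argument has two key ingredients: the bounded invertibility of $L_c$ on $L^\infty(\R)$ for $c\geq 2$, and the smallness of $\theta$---the same parameter central to \Cref{p.lowerboundspeed}---which makes the chemotactic nonlinearity a genuine perturbation of a linear equation with trivial bounded kernel. A secondary subtlety is ensuring that the shift parameter $a_n+\xi_{a_n}$ diverges, so the limit indeed lives on $\R$ and not a half-line; if a subsequence with $a_n+\xi_{a_n}$ bounded arose, one would reselect $\xi_a$---for instance, as the leftmost point at which $U_a$ attains its maximum value $\max_{x\geq 0}U_a=\theta$---and reapply the same linearization argument to rule out a half-line profile in the limit.
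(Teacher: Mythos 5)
Your proof takes a genuinely different route in both places where the paper must do real work. For the left asymptotic $\liminf_{x\to-\infty}U>0$, you normalize by translating to the $\tfrac12$-level set, so the bound comes out directly; the paper instead takes the limit with no shift (the normalization $\max_{x\geq 0}\widetilde U_a=\theta$ is what pins the front), and then proves $\liminf_{x\to-\infty}U>0$ by an indirect contradiction argument: rescale by translates $U_n=U(\cdot+x_n)/U(x_n)$, use the Harnack inequality, pass to a limiting equation $-cU_\infty'=U_\infty''+U_\infty$, and contradict the fact that $U_\infty$ attains a minimum of $1$ at the origin. For the right asymptotic $U(+\infty)=0$, your approach is a linearization/Green's-function argument for $L_c=\partial_x^2+c\partial_x+1$; the paper instead uses a much more elementary device: integrate~\eqref{eq.SlabModel} over $[0,a]$, note the boundary terms are uniformly bounded by~\eqref{02.22.1}, use the normalization $U_a\leq\theta<1$ on $[0,a]$ to keep $U_a(1-U_a)\geq(1-\theta)U_a$, and deduce an $a$-independent $L^1([0,\infty))$ bound, which combined with $C^{0,\alpha}$ gives $U(+\infty)=0$ without any smallness of $\theta$ beyond $\theta<1$. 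Your alternative does work (the $\|G_c\|_{L^1}$ bound is indeed uniform for $c$ bounded away from zero, not only for $c\geq 2$ --- a good thing, since at this stage you only have $c\geq 2-\eps$; the paper gets the sharp $c\geq 2$ in the separate \Cref{p.speed}), but it is heavier than necessary and it requires a $\theta$-smallness that the paper's argument avoids.

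Two steps in your version deserve more care. First, the estimate $\norm{U_\infty^2+\chi(U_\infty V_\infty')'}_{L^\infty}\leq CM^2$ requires $\norm{U_\infty'}_{L^\infty}\leq CM$; this does follow from interior elliptic ($W^{2,p}$ or Schauder) estimates for the linear equation $U_\infty''+(c-\chi V_\infty')U_\infty'+(1-U_\infty-\chi V_\infty'')U_\infty=0$, whose coefficients are bounded purely in terms of $d,\chi$, but it is a real step and as written the bound for the term $U_\infty'V_\infty'$ is simply asserted. Second, the domain issue for the shift ($a_n+\xi_{a_n}\to\infty$) is genuinely delicate and your proposed fix --- re-choosing $\xi_a$ at the rightmost max --- trades the problem at $-\infty$ for a symmetric one at $+\infty$ and also loses the immediate bound $\widehat U_{a_n}\geq\tfrac12$ on the left, which was the entire point of the shift. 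The paper's route sidesteps this by not shifting at all, at the cost of the more technical Harnack-rescaling proof of $\liminf_{x\to-\infty}U>0$ (which, to be fair, has its own implicit requirement that the limit $U$ is not identically zero so that the quotient $U(\cdot+x_n)/U(x_n)$ makes sense). Net: your approach is valid in outline and in several places cleaner, but you should either prove $a_n+\xi_{a_n}\to\infty$ or abandon the shift and argue as the paper does, and you should supply the gradient scaling $\norm{U_\infty'}_{L^\infty}\leq CM$ --- or, more simply, replace the linearization argument by the paper's one-line integration of the equation on $[0,a]$.
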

\begin{proof}
Applying \Cref{p.slab_existence}, we obtain a solution $(c_a,U_a)$ to~\eqref{eq.SlabModel}.  From the estimates  established in the first paragraph of the proof of \Cref{p.slab_existence} as well as elliptic regularity theory,  there is $C_0$ such that
\begin{equation}
	\frac{1}{C_0}
		\leq c_a, \|U_a\|_{C^{2,\alpha}}, \|V_a\|_{C^{2,\alpha}}
		\leq C_0.
\end{equation}
Moreover $U_a > 0$ on $(-a,a)$.


By compactness, 
there exists a sequence $a_n\to\infty$ and $(c,U)\in[\sfrac{1}{C_0},C_0]\times C^{2,\alpha}(\R)$ such that, as $n$ tends to infinity, $c_{a_n}\to c$ in $\R$ and $U_{a_n}\to U$ locally uniformly in $C^2(\R)$. Thus, it follows from \eqref{eq.SlabModel} that $(c,U,V)$ satisfies
\[
    -cU'+\chi(UV')'=U''+U(1-U)\qquad\text{in }\R,
\]
where $V=K_d*U$. 

Next, we prove that $U$ tends to zero as $x\to +\infty$. To do this, we show that $U$ is bounded in $L^1([0,\infty))$. Then, since $U\in C^{0,\alpha}(\R)$, it follows that $U$ vanishes at infinity. 

To establish our $L^1([0,\infty))$ bound on $U$, we first integrate \eqref{eq.SlabModel} over $[0,a]$:
\begin{equation}\label{02.22.2}
    \int_0^a U_a(1-U_a)
    =
    -c_aU_a(0)-U_a'(a)+U_a'(0)-\chi U_a(0)\va'(0).
\end{equation}
It follows from \eqref{02.22.1} that the right-hand side of \eqref{02.22.2} is bounded only in terms of $d$ and $\chi$. Also, due to the normalization $\max_{x\in[0,a]}U_a(x)=\theta<1$ and the positivity of $U_a$ on $[0,a]$, we have
\begin{equation}\label{02.22.3}
    \int_0^a U_a(1-\theta)
    \leq \int_0^a U_a(1-U_a) 
    \leq
    C.
\end{equation}
Taking the limit as $a\to\infty$ and using Fatou's lemma, we find
\[
	\int_0^\infty U
		\leq \liminf_{a\to\infty} \int_0^a U_a 
		\leq C.	
\]
as desired.  The fact that $U(+\infty) = 0$ follows immediately.

Lastly, we show $L:=\liminf_{x\to-\infty}U(x)$ is positive. Choose a sequence $(x_n)$ such that, as $n\to\infty$, $x_n\to-\infty$ and $U(x_n)\to L$, and define 
\[
    U_n(x)\coloneqq\frac{U(x+x_n)}{U(x_n)}\qquad\text{and}\qquad V_n(x)\coloneqq V(x+x_n).
\]
The Harnack inequality shows that $U_n$ is bounded on any compact set containing $x=0$.  Since $(c,U,V)$ satisfies \eqref{eq.TWSmodel}, it follows that $U_n$ and $V_n$ satisfy
\begin{equation}\label{eq.UnVnPDEs}
    \begin{cases}
    -cU_n'+\chi(U_nV_n')'=U_n''+U_n(1-U(x_n)U_n) & \text{in }\R,\\
    -dV_n''=U(x_n)U_n-V_n & \text{in }\R.
    \end{cases}
\end{equation}
The right side of the first equation in \eqref{eq.UnVnPDEs} is bounded by $U_n$. Therefore, by elliptic regularity theory, it follows that, up to a subsequence, $U_n\to U_\infty$ and $V_n\to V_\infty$ in $\Cloc^2(\R)$, and these functions satisfy
\begin{equation}\label{eq.UinftyPDEs}
    \begin{cases}
    -cU_\infty'+\chi(U_\infty V_\infty')'=U_\infty''+U_\infty(1-LU_\infty) & \text{in }\R,\\
    -dV_\infty''=LU_\infty-V_\infty & \text{in }\R.
    \end{cases}
\end{equation}

Suppose, by way of contradiction, that $L=0$. Then, \eqref{eq.UinftyPDEs} becomes
\begin{equation}\label{eq.LzeroUinftyPDEs}
    \begin{cases}
    -cU_\infty'+\chi(U_\infty V_\infty')'=U_\infty''+U_\infty & \text{in }\R,\\
    -dV_\infty''=-V_\infty & \text{in }\R.
    \end{cases}
\end{equation}
Recall that
\begin{equation}\label{e.c040801}
	V_n(x)
		= \int K_d(x+x_n - y) U(y) dy
		= \int K_d(x-y) U(y+x_n) dy.
\end{equation}
Since $U(x_n) \to 0$ as $n\to\infty$, we conclude that $U(y+x_n) \to 0$ locally uniformly in $y$ by the Harnack inequality.  It follows from this and~\eqref{e.c040801} that $V_n \to 0$ locally uniformly in $x$; that is, $V_\infty = 0$. 
Thus, the first equation in \eqref{eq.UinftyPDEs} becomes
\begin{equation}\label{07.05.1}
    -cU_\infty'=U_\infty''+U_\infty.
\end{equation}
By construction, $U_\infty$ achieves its minimum of $1$ at $x=0$.  From~\eqref{07.05.1}, we see that
\[
    0= -cU_\infty'
     = U_\infty''+1
     >0.
\]
This is a clearly a contradiction. It follows that $L>0$.  This completes the proof.
\end{proof}

We also have a lower bound on the speed of the traveling wave in \Cref{p.existTWS}.

\begin{proposition}\label{p.speed}
If $(c,U,V)$ is a traveling wave solution to \eqref{eq.model}, then $c\geq2$.
\end{proposition}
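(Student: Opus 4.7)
The plan is to argue by contradiction using the classical KPP oscillation argument, adapted to account for the nonlocal chemotactic term. Suppose $c<2$ and set $\omega:=\sqrt{1-\sfrac{c^2}{4}}>0$.

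First, I establish decay at $+\infty$. Since $U\in C^2(\R)\cap L^\infty(\R)$ with $U(+\infty)=0$, and $V=K_d*U$ with $K_d\in L^1(\R)$, the dominated convergence theorem yields $V(x)\to 0$ as $x\to+\infty$. By the same argument as in \Cref{l.VaInftyBounds}, $|V'|\leq V/\sqrt{d}$, so $V'(x)\to 0$ as well; and since $-dV''=U-V$, also $V''(x)\to 0$. Moreover, because $U>0$ satisfies a uniformly elliptic linear equation with bounded coefficients (using the bounds on $V$, $V'$, $V''$), the elliptic Harnack inequality applied on unit intervals gives $\sup_{[x-1,x+1]}U\leq C\inf_{[x-1,x+1]}U$, and standard Schauder estimates then yield $|U'(x)|\leq CU(x)$ for all $x$ sufficiently large.

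Second, I rewrite the PDE in a convenient form. Expanding $(UV')'=U'V'+UV''$ and using $-dV''=U-V$, the first equation of~\eqref{eq.TWSmodel} becomes
\[
    U''+cU'+U=U^2+\chi U'V'+\frac{\chi}{d}U(V-U).
\]
Introduce $\phi(x):=U(x)e^{cx/2}$, so that $\phi>0$ on $\R$. A direct computation gives $\phi''=e^{cx/2}\bigl(U''+cU'+\sfrac{c^2}{4}U\bigr)$, hence
\[
    \phi''+\omega^2\phi
    =e^{cx/2}\mleft(U^2+\chi U'V'+\frac{\chi}{d}U(V-U)\mright)
    =\eta(x)\,\phi,
\]
where
\[
    \eta(x):=U(x)+\chi\,\frac{U'(x)V'(x)}{U(x)}+\frac{\chi}{d}\bigl(V(x)-U(x)\bigr).
\]
From the first step, each summand of $\eta$ tends to $0$ as $x\to+\infty$ (the middle term because $|U'/U|\leq C$ and $V'\to 0$). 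Hence there exists $x_0>0$ such that $\omega^2-\eta(x)\geq \sfrac{\omega^2}{2}$ for all $x\geq x_0$, and $\phi$ satisfies
\[
    \phi''+q(x)\phi=0\quad\text{on }[x_0,\infty),\qquad q(x)\geq \frac{\omega^2}{2}.
\]

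Third, I invoke the Sturm comparison theorem. Since $q(x)\geq \sfrac{\omega^2}{2}>0$, $\phi$ must vanish on every interval of length at least $\sqrt{2}\pi/\omega$ contained in $[x_0,\infty)$; indeed, compare with $\psi''+\sfrac{\omega^2}{2}\psi=0$, whose nonzero solutions oscillate with period $2\sqrt 2\pi/\omega$. This contradicts the positivity $\phi=Ue^{cx/2}>0$ on $\R$, and so $c\geq 2$.

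The main obstacle is the control $|U'|\leq CU$ at infinity from Step 1, which is needed to show that the middle term of $\eta$ vanishes at $+\infty$. This requires that the coefficients in the linear equation $U''+cU'-\chi(V'U)'-U(1-U)=0$ (viewed as acting on $U$) be uniformly bounded near $+\infty$, which is ensured by the decay of $V$, $V'$, $V''$ already established from the convolution structure. Once this Harnack-type estimate is in hand, the Sturm oscillation step is essentially mechanical.
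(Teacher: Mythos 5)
Your proof is correct, but it takes a genuinely different route from the paper's. The paper establishes $c\geq 2$ by reusing the barrier construction from \Cref{p.lowerboundspeed}: one raises the compactly supported subsolution $\beta_A(x)=A^{-1}e^{-\lambda x}\cos^2\mleft(\tfrac{\pi}{2R}(x-L-R)\mright)$ until it touches $U$ from below, replacing the slab normalization $\theta$ by $\theta_L=\sup_{x\geq L}U(x)$, which tends to $0$ by \Cref{def.TWS}. You instead run the classical KPP oscillation argument: after the substitution $\phi=Ue^{cx/2}$, you show $\phi''+(\omega^2-\eta)\phi=0$ with $\omega^2-\eta\geq\omega^2/2>0$ near $+\infty$, and Sturm comparison forces $\phi$ to vanish, contradicting positivity. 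Both approaches are legitimate; yours is the more standard textbook argument and requires no ad hoc test function, but it forces you to establish the extra Harnack/Schauder estimate $|U'|\leq CU$ near $+\infty$ to kill the cross term $\chi U'V'/U$ in $\eta$, which is the one technical ingredient the paper's barrier approach sidesteps. That estimate is itself correct (the coefficients $c-\chi V'$ and $1-U-\chi V''$ are uniformly bounded and Hölder by the decay of $V$, $V'$, $V''$ that you establish, so one-dimensional Harnack plus interior gradient estimates apply). A small streamlining you might note: if instead of $e^{cx/2}$ one uses the multiplier $\exp\mleft(\tfrac12\int_0^x(c-\chi V')\mright)$, the first-order terms in $U$ are absorbed completely, the resulting zeroth-order coefficient still tends to $\omega^2$, and the Harnack step becomes unnecessary.
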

\begin{proof}
The proof of this claim follows almost exactly like the proof of \Cref{p.lowerboundspeed}, except we replace each $\theta$ with $\theta_L\coloneqq\min_{x\in[L,\infty)}U(x)$. The proof follows since $\theta_L\to0$ as $L\to\infty$ due to \Cref{def.TWS}.
\end{proof}

The combination of \Cref{p.existTWS} and \Cref{p.speed} yields \Cref{t.thm}.

\section{Proofs of the technical lemmas} \label{s.proofs}

\subsection{The key energy estimate: \texorpdfstring{\Cref{l.energy}}{Lemma \ref{l.energy}}}\label{sec.pfofkeyestimate}

As we saw above, the estimate in \Cref{l.energy} is the centerpiece of the construction of traveling wave solutions.  It is inspired by the proof in \cite{TelloWinkler}; however, being time-independent, it differs in several key aspects.

Before we begin, we prove a state and a prove a small technical estimate that shows we can reduce the $\lul^2$ norm to integrals on a well-chosen domain.
\begin{lemma}\label{l.lul_U_big}
	Suppose that $\|U_a\|_{L^\infty} > 1$.  Define
	\begin{equation}\label{e.c032606}
		x_1 = \min\Big\{x: U_a(x) = \frac12 \mleft(1 + \min\{\|U_a\|_{L^\infty},2\}\mright)\Big\}.
	\end{equation}
	Then
	\begin{equation}
		\|\widetilde{U}_a\|_{\lul^2}^2
			\leq \sup_{s\in\R} \int_{x_1}^a \phi_s U_a^2
				+ \frac{C}{\eul}.
	\end{equation}
\end{lemma}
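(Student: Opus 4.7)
The plan is a direct pointwise split of the integrand defining $\lul^2$, using that $x_1$ is \emph{the first} crossing of the level $\tfrac12(1+\min\{\|U_a\|_{L^\infty},2\}) \in (1, \tfrac32]$, together with the boundary data $U_a(-a)=1$ and $\widetilde{U}_a \equiv 0$ on $[a,\infty)$.

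First I would fix $s\in\R$ and decompose
\[
    \int_{-\infty}^\infty \phi_s\, \widetilde{U}_a^2
        = \int_{-\infty}^{x_1} \phi_s\, \widetilde{U}_a^2
          + \int_{x_1}^{a} \phi_s\, U_a^2,
\]
where the tail $x \geq a$ contributes nothing because $\widetilde{U}_a = 0$ there. For the left piece, the key observation is that on $(-\infty, -a]$ we have $\widetilde{U}_a \equiv 1$, while on $[-a, x_1)$ the minimality in the definition \eqref{e.c032606} of $x_1$ forces $U_a(x) < \tfrac12(1+\min\{\|U_a\|_{L^\infty},2\}) \leq \tfrac32$. (This is where the hypothesis $\|U_a\|_{L^\infty} > 1$ matters: it guarantees that $x_1$ is well-defined and that the level under consideration is strictly above the boundary value $U_a(-a)=1$, so $x_1 > -a$.) Hence $|\widetilde{U}_a| \leq \tfrac32$ throughout $(-\infty, x_1)$, giving
\[
    \int_{-\infty}^{x_1} \phi_s\, \widetilde{U}_a^2
        \leq \frac{9}{4}\int_{-\infty}^{x_1} \phi_s
        \leq \frac{9}{4}\int_{\R} \phi.
\]

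Next I would evaluate $\int_\R \phi$. Since $\phi = e^{-\eul|\cdot|}*\psi$ with $\|\psi\|_{L^1}=1$, Fubini yields
\[
    \int_\R \phi
        = \Big(\int_\R e^{-\eul|x|}\,dx\Big)\Big(\int_\R \psi\Big)
        = \frac{2}{\eul}.
\]
Therefore $\int_{-\infty}^{x_1}\phi_s\,\widetilde{U}_a^2 \leq C/\eul$ with $C$ depending only on universal constants. Combining with the earlier decomposition,
\[
    \int_{-\infty}^\infty \phi_s\, \widetilde{U}_a^2
        \leq \int_{x_1}^a \phi_s\, U_a^2 + \frac{C}{\eul}
        \leq \sup_{s'\in\R}\int_{x_1}^a \phi_{s'}\, U_a^2 + \frac{C}{\eul}.
\]
Taking supremum in $s$ on the left gives the claimed inequality.

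This argument is essentially routine; the only subtlety is the level-set choice, but this is handled by the specific formula in \eqref{e.c032606}, which is tailored precisely so that $\widetilde{U}_a$ is pointwise bounded by a constant on $(-\infty, x_1)$ regardless of whether $\|U_a\|_{L^\infty}$ is large or only slightly above $1$. I expect no serious obstacle.
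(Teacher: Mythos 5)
Your proof is correct and follows essentially the same route as the paper: split the $\lul^2$ integral at $x_1$, bound $\widetilde{U}_a$ by a constant on $(-\infty,x_1)$ using the minimality in the definition of $x_1$ together with the boundary data, and bound $\int\phi_s$ by $C/\eul$. The paper handles the supremum by choosing an $s$ that nearly attains it and then sending $\eps\to0$, while you fix $s$ and take the supremum at the end after observing the right-hand side is $s$-independent; these are interchangeable. (Your explicit Fubini computation $\int\phi = 2/\eul$ is a clean substitute for the paper's appeal to $\phi\le Ce^{-\eul|x|}$ from \Cref{l.varphiBounds}.)
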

\begin{proof}
Fix any $\eps>0$.  Choose $s$ such that
\begin{equation}
	\|\widetilde{U}_a\|_{\lul^2}^2
		\leq (1+\eps) \int_{-\infty}^\infty \phi_s \widetilde{U}_a^2.
\end{equation}
Then, using that $\widetilde{U}_a \leq 2$ on $(-\infty,x_1)$ and $\widetilde{U}_a = 0$ on $(a,\infty)$, we find
\begin{equation}
	\begin{split}
	\frac{1}{1+\eps} \|\widetilde{U}_a\|_{\lul^2}^2
		&\leq \int_{-\infty}^{\infty} \phi_s \widetilde{U}_a^2
		\leq 2\int_{-\infty}^{x_1} \phi_s
			+ \int_{x_1}^a \phi_s U_a^2
		\\&
		\leq \frac{C}{\eul}
			+ \int_{x_1}^a \phi_s U_a^2.
	\end{split}
\end{equation}
The proof is concluded by taking $\eps\to 0$.
\end{proof}

Let us comment on the usefulness of $x_1$.  As we see above, the portion of the $\lul^2$-norm occurring on $[x_1,a]^c$ is nicely bounded.  Hence, we do not ``lose anything'' by only integrating on $[x_1,a]$.  On the other hand, in our energy estimate below, we obtain, through integration by parts, boundary terms involving $U_a'$.  By defining $x_1$ as above (and choosing $a$ as the other boundary), we are assured that these have a ``good sign.''

We are now in a position to prove the energy estimate.  A key insight, inspired by the work in \cite{TelloWinkler}, is to  obtain the estimate via the quadratic term $-U_a^2$ in~\eqref{eq.SlabModel}.

\begin{proof}[Proof of \Cref{l.energy}]
We first observe that if $\|U_a\|_{L^\infty} \leq 1$, the proof follows immediately.  Hence, we proceed assuming that $\|U_a\|_{L^\infty} > 1$ so that $x_1$ is well-defined (recall~\eqref{e.c032606}) and we have access to \Cref{l.lul_U_big}.

We claim that, for any $N \geq 1$,
\begin{equation}\label{8.26.13}
\begin{split}
	\sup_{s\in\R} \int_{x_1}^a \phi_s U_a^2
		\leq
		\frac{CN}{\sqrt{\eul}}
		+ CN \eul c_a^2
		+ C\mleft(\sqrt{\eul} + \frac{1}{N} \mright) \|U_a\|_{\lul^2}^2.
\end{split}
\end{equation}
Let us postpone the proof of~\eqref{8.26.13} momentarily and, first, show how to conclude the proof of the lemma with it.

After applying \Cref{l.lul_U_big}, we find that $\|U_a\|_{\lul^2}^2$ satisfies the same inequality:
\begin{equation}\label{e.c040201}
	\|U_a\|_{\lul^2}
		\leq  \frac{CN}{\sqrt{\eul}}
			+ CN \eul c_a^2
			+ C\mleft(\sqrt{\eul} + \frac{1}{N} \mright) \|U_a\|_{\lul^2}^2.
\end{equation}
Indeed, the extra $C/\eul$ term may be absorbed into the $CN/\sqrt{\eul}$ term.  Sufficiently decreasing $\eul$ and increasing $N$, we see that the last term on the right hand side of~\eqref{e.c040201} may be absorbed into the left hand side of~\eqref{e.c040201} to yield the desired inequality:
\[
	\frac{1}{2} \|U_a\|_{\lul^2}^2
		\leq \mleft(1 - C\sqrt{\eul} - \frac{C}{N}\mright) \|U_a\|_{\lul^2}^2
		\leq \frac{CN}{\sqrt{\eul}} + CN \eul c_a^2.
\]

We now show the proof of~\eqref{8.26.13}.  It is enough to establish the result for any fixed $s$.  We begin by rewriting \eqref{eq.SlabModel} using \eqref{eq.va''}: 
\begin{equation}\label{23.04.24.1}
    \mleft(1-\frac{\chi}{d}\mright)U_a^2
    =c_aU_a'-\chi U_a'\va'-\frac{\chi}{d}U_a\va+U_a''+U_a.
\end{equation}
Define $x_1$ as in the statement of \Cref{l.lul_U_big}. 
Multiplying \eqref{23.04.24.1} by $\phi_s$ and integrating over $[x_1,a]$ gives
\begin{equation}\label{8.26.11}
    \mleft(1-\frac{\chi}{d}\mright)\int_{x_1}^{a}\phi_sU_a^2
    =
    c_a\int_{x_1}^{a}\phi_sU_a'
    -\chi\int_{x_1}^{a}\phi_sU_a'\va'
    -\frac{\chi}{d}\int_{x_1}^{a}\phi_sU_a\va
    +\int_{x_1}^{a}\phi_sU_a''
    +\int_{x_1}^{a}\phi_sU_a.
\end{equation}
Let $I_1,\dots,I_5$ denote, respectively, the five terms on the right of \eqref{8.26.11}. 

\bigskip

\textbf{The integral $\boldsymbol{I_1}$.} Integrating by parts gives
    \begin{equation}
        I_1
        =c_a\phi_s U_a\mleft.\vphantom{\int}\mright|_{x_1}^{a}- c_a\int_{x_1}^{a}\phi_s'U_a
        = - c_a \phi_s(x_1) U_a(x_1) - c_a \int_{x_1}^a \phi_s' U_a
        \leq  - c_a \int_{x_1}^a \phi_s' U_a.
    \end{equation}
    By the properties of $\phi_s$ in \eqref{eq.varphiBounds}, as well as the choice of $x_1$, we have
    \begin{equation*}
	I_1
		\leq 
		C c_a\eul \int_{x_1}^{a}\phi_sU_a
		=
		C\int_{x_1}^{a}\phi_s(\eul c_a)(U_a).
    \end{equation*}
    Young's inequality then gives
    \begin{align*}
	I_1
		&\leq
		CN\int_{x_1}^{a}\phi_s \eul^2c_a^2 + \frac{1}{N} \int_{x_1}^{a} \phi_s U_a^2
		\leq
		CN\eul c_a^2 + \frac{\|U_a\|_{\lul^2}^2}{N}.\\
    \end{align*}
Notice that the nonlinear term $c_a$ appears with a smaller parameter $\eul$ in front, which is what makes this a useful estimate.  On the other hand, the integral term of $U_a$ appears with a small $1/N$ factor, so it can eventually be absorbed.

\bigskip
    
\textbf{The integral $\boldsymbol{I_2}$.} Integrating by parts and using \eqref{eq.va''} gives
    \begin{align}\label{8.26.12}
        I_2 
        &=
        - \chi\int_{x_1}^{a}\phi_s U_a' V_a'
        =
        - \chi\phi_sU_a\va'\mleft.\vphantom{\int}\mright|_{x_1}^{a}
        +\chi\int_{x_1}^{a}\phi_s'U_a\va'
        +\chi\int_{x_1}^{a}\phi_sU_a\va''\nonumber\\
        &=
        -\chi\phi_sU_a\va'\mleft.\vphantom{\int}\mright|_{x_1}^{a}
        +\chi\int_{x_1}^{a}\phi_s'U_a\va'
        +\frac{\chi}{d}\int_{x_1}^{a}\phi_sU_a\va
        -\frac{\chi}{d}\int_{x_1}^{a}\phi_sU_a^2.
    \end{align}
    Consider the first term in the right hand side of~\eqref{8.26.12}.  We use the choice of $x_1$ in~\eqref{e.c032606} to control $\phi_s U_a$ and \Cref{l.VaInftyBounds} to control $V_a'$ in order to obtain
    \[
        -\chi\phi_sU_a\va'\mleft.\vphantom{\int}\mright|_{x_1}^{a}
        = \chi \phi_s(x_1) U_a(x_1) V_a'(x_1)
        \leq C \|V_a\|_{L^\infty}
        \leq C \|U_a\|_{\lul^2}
        \leq C\sqrt{\eul} \|U_a\|_{\lul^2}^2 + \frac{C}{\sqrt{\eul}}.
    \]
    In the second-to-last inequality, we used \Cref{l.UnifLocBoundOnVa}, and in the last one, we used Young's inequality.  Note that there is no importance to the square root above; it is chosen for convenience as the square root naturally shows up in the next estimate~\eqref{e.c040102}.
    
	Arguing using \Cref{l.VaInftyBounds,l.varphiBounds}, we control the second term on the right in~\eqref{8.26.12} as follows:
	\begin{equation}\label{e.c040102}
	\begin{split}
	\chi\int_{x_1}^{a}\phi_s'U_a\va'
        &\leq
        C \eul  \int_{x_1}^{a}\phi_sU_a \va
        \leq C \eul \int_{x_1}^a \phi_s \mleft( \frac{1}{\sqrt{\eul}} U_a^2 + \sqrt{\eul} \|V_a\|_{L^\infty}^2 \mright)
        \\&= 
        		C \sqrt{\eul} \|U_a\|_{\lul^2}^2
        			+ C\eul \sqrt{\eul} \|V_a\|_{L^\infty}^2\int_{x_1}^a \phi_s.
	\end{split}
	\end{equation}
	The last inequality followed by Young's inequality.  Then, recalling \Cref{l.UnifLocBoundOnVa}, we find
	\[
	\begin{split}
        \chi\int_{x_1}^{a}\phi_s'U_a\va'
		&\leq C \sqrt{\eul} \int_{x_1}^a \phi_s  U_a^2
        			+ C\eul \sqrt{\eul} \|U_a\|_{\lul^2}^2\int \phi_s
		\\&
		\leq C \sqrt{\eul} \int_{x_1}^a \phi_s  U_a^2
			+ C \sqrt{\eul} \|U_a\|_{\lul^2}^2
		\leq C \sqrt{\eul} \|U_a\|_{\lul^2}^2.
	\end{split}
	\]

    Therefore, we have the following estimate for $I_2$:
    \[
        I_2
        \leq
        		\frac{C}{\sqrt{\eul}}
		+ C\sqrt{\eul} \|U_a\|_{\lul^2}^2
	+ \frac{\chi}{d}\int_{x_1}^{a}\phi_sU_a\va
	-\frac{\chi}{d}\int_{x_1}^{a}\phi_sU_a^2.
    \]

\bigskip
    
\textbf{The integral $\boldsymbol{I_3}$.}  We do not estimate $I_3$.  It will be used to cancel one of the bad terms from the estimate of $I_2$. 

\bigskip
    
\textbf{The integral $\boldsymbol{I_4}$.} Integrating by parts gives
    \begin{equation}\label{e.c032611}
    \begin{split}
        I_4
       & =
        \int_{x_1}^{a}\phi_sU_a^{p-1}U_a''
        =
        \phi_s U_a'\mleft.\vphantom{\int}\mright|_{x_1}^{a}-\int_{x_1}^{a}\phi_s'
       \leq -\int_{x_1}^{a}\phi_s'
       = \phi_s(x_1) - \phi_s(a)
       \leq 1.
    \end{split}
    \end{equation}
    The first inequality above follows from the choice of $x_1$, which ensures that $U_a'(a) \leq 0$ and $U_a'(x_1) \geq 0$.  We note that we have no control of $\|U_a\|_{L^\infty}$ at this point in the proof, so it is crucial to our argument that $U_a'$ satisfies these inequalities at the endpoints.  This is the motivation for our definition~\eqref{e.c032606} of $x_1$.

\bigskip
    
\textbf{The integral $\boldsymbol{I_5}$.}  By the properties of $\phi_s$ in \eqref{eq.varphiBounds} and Young's inequality, we have
    \[
        I_5
        =
        \int_{x_1}^{a}\phi_sU_a
        \leq
        \int_{x_1}^{a}\phi_s\mleft(N+\frac{U_a^2}{N}\mright)
        \leq
        C\frac{N}{\eul}+\frac{1}{N}\int_{x_1}^{a}\phi_sU_a^2
        \leq \frac{CN}{\eul} + \frac{1}{N} \|U_a\|_{\lul^2}^2.
    \]
    
    \bigskip
    
Combining all above with \eqref{8.26.11} yields~\eqref{8.26.13}, which completes the proof of the lemma.
%
\end{proof}

\subsection{An upper bound on \texorpdfstring{$\boldsymbol{c_a}$}{cₐ} by \texorpdfstring{$\boldsymbol{\|V_a\|_{L^\infty}}$}{‖Vₐ‖ ͚ }}
\label{ss.c_a<=|U_a|}

As we noted above, the proof in the sequel is quite standard.

\begin{proof}[Proof of \Cref{l.c_a<=|U_a|}]
Let $\calA=\{A\in\R:\alpha_A\geq U_a\}$, where we define
\[
	\alpha_A(x)=Ae^{-x}.
\]
Observe that $\calA$ is nonempty as $A=e^{a}\norm{U_a}_{L^\infty}$ is in $\calA$. Indeed, with this choice for $A$, we have, for all $x\in[-a,a]$,
\[
    \alpha_A(x)\geq\alpha_A(a)=\norm{U_a}_{L^\infty}\geq U_a(x),
\]
where the first inequality follows since $\alpha_A$ is decreasing. Also, notice that the value $A=\frac{1}{2}e^{-a}$ is positive, but $A\not\in\calA$ since
\[
    \alpha_A(-a)=\frac{1}{2}<1=U_a(-a).
\]
Hence, $\calA$ has a lower bound that is positive. Therefore, by the continuity of $U_a$ and $\alpha_A$, it follows that
\[
    A_0\coloneqq\min\calA
\]
is a well-defined, positive constant.

For simplicity, denote $\alpha=\alpha_{A_0}$. By continuity, there exists $x_0\in[-a,a]$ such that $\alpha(x_0)=U_a(x_0)$. Since $\alpha(a)>0$ but $U_a(a)=0$, we have $x_0\neq a$. Now, suppose $x_0=-a$. Then,
\[
    1=U_a(-a)=\alpha(-a)=A_0e^{a},
\]
implying $A_0=e^{-a}$. Let $x_\theta$ be a point in $[0,a]$ in which $U_a$ achieves its maximum on this interval, i.e., $U_a(x_\theta)=\max_{x\geq0}U_a(x)=\theta$.  Thus, 
\[
    e^{-a}
    =
    A_0
    =
    \alpha(0)
    \geq
    \alpha(x_\theta)
    \geq
    U_a(x_\theta)
    =
    \theta.
\]
This contradicts our assumptions that $a>\ln(1/\theta)$. Hence, $x_0\neq -a$.

We deduce that $x_0\in(-a,a)$. The function $\alpha-U_a$ attains its minimum of zero at $x_0\in(-a,a)$, which yields the following:
\[
    U_a(x_0)=\alpha(x_0),
    \qquad
    U_a'(x_0)=\alpha'(x_0)=-\alpha(x_0),
    \qquad\text{and}\qquad
    U_a''(x_0)\leq\alpha''(x_0)=\alpha(x_0).
\]
Using these relations in~\eqref{eq.SlabModel}, along with the bounds on $\va'$ and $\va''$ from \Cref{l.VaInftyBounds} and the positivity of $U_a$ from \Cref{p.U_a>=0}, we have, at $x_0$,
\begin{align}\label{06.29.1}
    0
    &=
    U_a''+U_a(1-U_a)-\tau\chi U_a\va''-\tau\chi U_a\va'+c_aU_a' \nonumber\\
    &\leq
    \alpha+\alpha+\tau\chi\alpha\frac{1}{d}\norm{V_a}_{L^\infty}+\tau\chi\alpha\frac{1}{\sqrt{d}}\norm{V_a}_{L^\infty}-c_a\alpha \nonumber\\
    &\leq\alpha\mleft(2+\frac{\chi}{d}\norm{V_a}_{L^\infty}+\frac{\chi}{\sqrt{d}}\norm{V_a}_{L^\infty}-c_a\mright).
\end{align}
Since $\alpha(x_0)$ is positive, its coefficient in \eqref{06.29.1} must be nonnegative. Hence, \eqref{eq.ca<=|U_a|} follows.
\end{proof}

\bigskip

\noindent{\bf Acknowledgments.} CH was supported by NSF grants DMS-2003110 and DMS-2204615.  MR was supported by NSF grant GCR-2020915.  The authors acknowledge support of the Institut Henri Poincar\'e (UAR 839 CNRS-Sorbonne Université), and LabEx CARMIN (ANR-10-LABX-59-01).  The authors thank Quentin Griette for helpful discussions regarding the numerical work in~\cite{fu2021sharp}.

\bibliographystyle{plain}
\bibliography{ref.bib}

\end{document}